\documentclass[abstracton, paper=a4, fontsize=11pt,bibliography=totoc,final]{scrartcl}

\pdfoutput=1

\usepackage[utf8]{inputenc}
\usepackage[T1]{fontenc}
\usepackage{lmodern}
\usepackage[french,english]{babel}
\usepackage{csquotes}
\usepackage{amsthm, amssymb, amsmath, amsfonts, mathrsfs, dsfont, esint,textcomp}
\usepackage[colorlinks=true, pdfstartview=FitV, linkcolor=blue, citecolor=blue, urlcolor=blue,pagebackref=false]{hyperref}
\usepackage{mathtools}
\usepackage{enumitem}

\usepackage[a4paper,left=20mm,right=20mm,top=30mm,bottom=30mm,marginpar=20mm]{geometry}

%\usepackage{a4wide}  %\usepackage{fullpage}
%\usepackage{amscd, graphicx, epsfig, psfrag, enumerate, dsfont}

%\usepackage{color}

%\usepackage[pagewise]{lineno}
%\linenumbers

\usepackage{microtype}

\usepackage[backend=biber,bibencoding=utf8,giveninits,maxnames=4,style=alphabetic,isbn=false, doi=false, eprint= true, url= false]{biblatex}

\bibliography{Hoefer.bib}

\usepackage[notcite,notref,color]{showkeys}
\definecolor{labelkey}{gray}{.8}
\definecolor{refkey}{gray}{.8}

\definecolor{darkblue}{rgb}{0,0,0.7} 
\definecolor{darkred}{rgb}{0.9,0.1,0.1}
\definecolor{darkgreen}{rgb}{0,0.5,0}
%
%%%%%%
%Jonas

%%%%%%%%%%%%%%%%%%%%%%%%%%%%%%%%%%%%%%%%%%%%%%%%%%
%kleiner Richard

\newcommand{\rh}[1]{{\color{darkgreen}{#1}}}

\newtheorem{thm}{Theorem}[section]
\newtheorem{prop}[thm]{Proposition}
\newtheorem{lem}[thm]{Lemma}
\newtheorem{cor}[thm]{Corollary}
\theoremstyle{remark}
\newtheorem{rem}[thm]{Remark}
\theoremstyle{definition}

\renewcommand{\leq}{\leqslant}
\renewcommand{\geq}{\geqslant}

\renewcommand{\subset}{\subseteq}

\newcommand{\E}{\mathbb{E}}

\newcommand{\J}{\mathfrak{J}}

\newcommand{\F}{\mathcal{F}}
\renewcommand{\L}{\mathscr{L}}

\newcommand{\N}{\mathbb{N}}

\newcommand{\1}{\mathbf{1}}
\newcommand{\R}{\mathbb{R}}

\renewcommand{\P}{\mathbb{P}}

\newcommand{\eps}{\varepsilon}

\renewcommand{\div}{\mathop{\mathrm{div}}}

\newcommand{\de}{\mathrm{d}}
\newcommand{\wto}{\rightharpoonup}

\newcommand{\dd}{\, \mathrm{d}}
\renewcommand{\H}{\mathcal H}
\newcommand{\loc}{\mathrm{loc}}
\newcommand{\Ocal}{\mathcal{O}}
\newcommand{\Fcal}{\mathcal{F}}
\newcommand{\Rcal}{\mathcal{R}}

\newcommand{\abs}[1]{\left|#1\right|}
\newcommand{\norm}[1]{\left\|#1\right\|}

\DeclareMathOperator*{\dist}{dist}
\DeclareMathOperator*{\dv}{div}

\DeclareMathOperator*{\supp}{supp}

\numberwithin{equation}{section}

\mathtoolsset{showonlyrefs}

\usepackage{authblk}

\setkomafont{date}{\large}

\author[1]{Richard M. H\"ofer\thanks{richard.hoefer@ur.de}}
\author[2]{Jonas Jansen\thanks{jonas.jansen@math.lth.se}}
\affil[1]{Faculty for Mathematics, Regensburg University, Germany}
\affil[2]{Centre for Mathematical Sciences, Lund University, Sweden}

\begin{document}
\title{Convergence rates and fluctuations for the Stokes-Brinkman equations as homogenization limit in perforated domains}
% \author{ Richard M. H\"ofer, Jonas Jansen}

%
\maketitle

% \textbf{TO DO:}
% \begin{enumerate}
%     \item IN den Feppon referenzen ist was schiefgegangen
%     \item Include references to Y.Lu and W.Jing?	\item Check for half empty pages due to overly long formulae

% %	\item Formulierung Theorem main: 
% %	\begin{enumerate}
% %		\item Punkt (i) $H^s_\loc$-Konvergenz ($s=0$?) in Wahrscheinlichkeit.  Vermutung: Zeige optimale $L^2(H^{1/2}$-Abschätzungen: $m^{-1/2} \|u_m - u\|_{H^{1/2}_\loc}$ uniform beschränkt in $L^2$.  Dann interpoliere...
% %		\item Punkt (ii) $\xi_m \to \xi$ in $H^s$ in Wahrscheinlichkeit, $s < 1/2$. Verwende $\|\delta_x^m - \delta_x\|_{H^r} \leq  m^{3/2 + r}$ für $r \in [-2,-1]$ (vielleicht auch mehr $r$). Alle Abschätzungen mit Interpolation und Beschränktheit in $H^{1/2}$ außer dem zweiten TEil von Prop. 3.3, da $\tau_m \not \in H^{1/2}$. Diesen TEil einfach zu Fuß.
% %		Schließlich löst $\xi$ irgendeine Gleichung der Form $\xi = A \zeta$, wobei $\zeta$ im Falle von Poisson punktprozessen von der Form $\phi \dd W$ ist. Genauer erwarten wir $\phi(x) = \left(\int f(x,v) (u-v)\otimes (u-v) \dd v \right)^{1/2}$. 
% %		Was passiert für unser Setting mit $m$ Teilchen? Regularitätsprobleme von $\phi$...
% %	\end{enumerate}
% %	\item Add Duerinckx, Gloria, Quantitative homogenization theory for random suspensions in steady Stokes flow (2021) \cite{duerinckx_quantitative_2021} and paper on higher orders for Brinkman \cite{feppon_10_2022,feppon_high_2021,feppon_high_2021-1}
% \end{enumerate}

%Errata: h\in \dot{H}^1(\R^3;\R^3), pg.5 \|u_m-u\|_{L^2_{\loc}(\R^d)}

\begin{abstract}
We study the  homogenization of the Dirichlet problem for the Stokes equations in $\R^3$ perforated by $m$ spherical particles. 
We assume the positions and velocities of the particles to be identically and independently distributed random variables.
In the critical regime, when the radii of the particles are of order $m^{-1}$, the
homogenization limit $u$  is given as the solution to the Brinkman equations.
We provide optimal rates for the convergence $u_m \to u$ in $L^2$, namely $m^{-\beta}$ for all $\beta < 1/2$. Moreover, we consider the fluctuations. In the central limit scaling, we show that these converge to a Gaussian field, locally in $L^2(\R^3)$,
with an explicit covariance. 

Our analysis is based on  explicit approximations for the solutions $u_m$ in terms of $u$ as well as the particle positions and their velocities. These are shown to be accurate in $\dot H^1(\R^3)$ to order $m^{-\beta}$ for all $\beta < 1$.
Our results also apply to the analogous problem regarding the homogenization of the Poisson equations.
\end{abstract}

%
%
%
%
%
%%%%%%%%%%%%%%%%%%%%%%%%%%%%%%%%%%%%%%%%%%%%%%%%%%%%%%%%%%%%%%
%%%%%%%%%%%%%%%%%%%%%%%%%%%%%%%%%%%%%%%%%%%%%%%%%%%%%%%%%%%%%%
%
%
%
\tableofcontents

\section{Introduction}

Numerous applications regarding the dynamics of suspensions and aerosols call for macro- and mesoscopic models which couple the particle evolution to the fluid. One of the most well-known models are the so-called Vlasov-Navier-Stokes equations for spherical, non-Brownian inertial particles. If the fluid inertia is neglected, they reduce to the so-called Vlasov-Stokes equations which take the dimensionless form
\begin{align} \label{eq:Vlasov.Stokes}
\left\{\begin{array}{rl}
	\partial_t f + v \cdot \nabla_x f + \dv((u-v)f) = 0, \\
	- \Delta u + \nabla p +  \rho u - j = h, \quad \dv u = 0, \\
	\rho = \int f \dd v,  \quad j = \int v f \dd v,
	\end{array}\right.
\end{align}
where $f(t,x,v)$ is the particle density and $h$ is some external force acting on the fluid. 
For questions regarding modeling and applications of this system, we refer the reader to \cite{BoudinGrandmontLorzMoussa15} and the references therein.

The rigorous derivation of these equations from a microscopic system is a wide open problem. The main difficulty lies in the nature of the interaction of the particles which is only implicitly given through the fluid. Moreover it is singular and long range. 
A natural preliminary step towards the rigorous derivation of the Vlasov(-Navier)-Stokes equations consists in the derivation of the limit fluid equations in \eqref{eq:Vlasov.Stokes} without taking into account the particle evolution. These are the so-called Brinkman equations. The additional term $\rho u - j$ describes the effective drag force that the particles exert on the fluid: The drag force of a single particle in a Stokes flow is given by 
\begin{align}
	F_i = 6 \pi R (V_i - u_i),
\end{align}
where $R$ is the particle radius, $V_i$ its velocity and $u_i$ is the unperturbed fluid velocity at the position of the particle. Therefore, the total drag will be of order one if the number of particles $m$ (in a finite volume) times their radius $R_m$ is of order one. By making the convenient choice
\begin{align} \label{eq:R_m.Stokes}
	R_m = \frac 1 {6 \pi m},
\end{align}
the Brinkman equations in the form above arise based on a superposition principle for the drag forces.

The rigorous derivation of the Brinkman equations has attracted increasing attention over the last years,
with results both in the cases of zero  and non-zero  particle velocities, see e.g. \cite{Allaire90a,  GiuntiHoefer19, Gerard-Varet19} and \cite{DesvillettesGolseRicci08, HillairetMoussaSueur19,  CarrapatosoHillairet20}, respectively. The most recent results focus on the derivation under very mild assumptions for (random) particle configurations.  
Such investigations seem compulsory in order to eventually accomplish the rigorous derivation of the Vlasov(-Navier)-Stokes equations. 
In this regard, it is also desirable  to develop very accurate explicit approximations  for the 
microscopic solution  $u_m$ and to characterize its convergence rate to the limit $u$ as well as the associated fluctuations. In our paper, we focus on these aspects.

\subsection{Statement of the main result}

We  consider the perforated domain
	\[\Omega_m = \R^3\setminus \bigcup_{i=1}^{m} \overline{B}_i,\]
where the particles are given by \(B_i = B_{R_m}(X_i)\) with $R_m$ as in \eqref{eq:R_m.Stokes}. The particle  positions \(X_1,\ldots,X_m\) as well as their velocities \(V_1,\ldots,V_m\)  are random variables in \(\R^3\). For $h \in  \dot H^{-1}(\R^3)$, we  study the solution $u_m$ to the Stokes equations
\begin{align}\label{eq: CM}
\left\{\begin{array}{rl}
	-\Delta u_{m} + \nabla p_m  =  h, \quad \div u_m = 0 \quad &\text{in } \Omega_{m}, \\
	u_{m}  = V_i \quad &\text{in }  B_i, \: i=1,\ldots,m.
\end{array} \right.
\end{align}
We consider the case when \(Z_i = (X_i,V_i)\) are i.i.d. according to \(f\in\mathcal{P}(\R^3\times\R^3)\). We impose the following hypotheses on $f$:
\begin{enumerate}[label*=(H\arabic*)]
\item \label{ass:energy} \(\int_{\R^3\times \R^3} |v|^2 f(\de x, \de v) <\infty\);
	\item \label{ass:rho} the distribution of the centers \(\rho(\cdot) := \int_{\R^3} f(\cdot,\de v)\in W^{1,\infty}(\R^3)\) is compactly supported;
	\item \label{ass:j} \(j(\cdot) := \int_{\R^3} v f(\cdot,\de v) \in{H}^1(\R^3)\).
\end{enumerate}

We remark that we in particular allow to choose \(f(\de x, \de v) = \rho(x) \dd x \delta_{v=0}\) which means that all particle velocities are zero.

%{ \red Check where we want to put this remark! Also make sure the references are not hardcoded...
%\begin{rem}
%
%	\begin{enumerate}
%		\item The conditions \ref{ass:energy}--\ref{ass:rho} imply that \(j\in L^1(\R^3) \cap L^6(\R^3)\).
%		\item 
%	\end{enumerate}
%\end{rem}
%}

We note that it is classical that the Stokes equations \eqref{eq: CM} are  well-posed if the particles do not overlap in the sense that there exists a unique weak solution $u_m \in \dot H^{1}(\R^3)$.  As stated in the following lemma overlapping of particles  does not occur with probability approaching \(1\) as \(m\to\infty\).
The lemma is a standard result that can for example be found in \cite[Proposition A.3]{Hauray09}.
\begin{lem}\label{lem:prob.distances}
For \(\nu \geq 0 \), $L > 0$ let 	
\[\Ocal_{m,\nu,L} = \left\{(Z_i)_{i=1}^{m} = ((X_i,V_i))_{i=1}^{m} : \min\limits_{i\neq j}|X_i-X_j| > L m^\nu R_m \right\}.\]
Then, for all $0 \leq \nu < 1/3$ and all $L > 0$, there exists $m_0 > 0$ such that for all $m \geq m_0$
	\begin{align}
		\P(\Ocal^c_{m,\nu,L}) \leq C L m^{\nu - 1/3},
	\end{align}
	where $C$ depends only on $\rho$.
\end{lem}

For overcoming the problem of the ill-posedness of \eqref{eq: CM} for overlapping particles, we could restrict ourselves to configurations of non-overlapping particles. However, this results in the loss of the independence of the particle positions. Thus, for technical reasons, we prefer to define $u_m$ to be the solution to \eqref{eq: CM} for \((Z_i)_{i=1}^{m} \in\Ocal_{m,0,2}\) and $u_m = u$ for \((Z_i)_{i=1}^{m} \notin\Ocal_{m,0,2}\).

%We study the classical homogenization problem $m \to \infty$, $R_m \to 0$. 
%It is well known that the critical regime for the radius is $R_m \sim m^{-1}$ such that the total capacity of the holes is of order one.
%For convenience, we choose
%\begin{align} \label{eq:R_m.Stokes}
%	R_m = \frac 1 {6 \pi m}.
%\end{align}

For the statement of our main result, we introduce $u \in \dot H^1(\R^3)$ as the unique weak solution to the Brinkman equations
\begin{equation}\label{eq: CM_hom}
-\Delta u +  (\rho u - j)+ \nabla p   =  h, \quad \dv u = 0 \quad \text{in } \R^3. \\
\end{equation}
We remark that this problem is  well-posed  due to the assumptions \ref{ass:rho} and \ref{ass:j} (note that $j$ also has compact support and hence $j \in \dot H^{-1}(\R^3)$). 

Moreover, we introduce the solution operator $A$  for the Brinkman equations with vanishing flux $j$.
More precisely, $A$,  which depends on $\rho$,  maps $g$ to to the solution $w$ of the equation
\begin{align} \label{eq: CM_lambda_hom}
-\Delta w + \rho w + \nabla p = g, \quad \div w = 0 \quad \text{in } \R^3.
\end{align}
	
\begin{thm} \label{th:main}
	Let $h \in \dot H^{-1}(\R^3)$ and let $u_m \in \dot H^1(\R^3)$ and $u \in \dot H^1(\R^3)$ be the unique weak solutions to \eqref{eq: CM} and \eqref{eq: CM_hom}.
	\begin{enumerate}[label*=(\roman*)]
		\item \label{it:L^2.Convergence} For any $\beta < 1/2$ and any compact set $K \subset \R^3$
		\begin{align}
		m^\beta \|u_m - u\|_{L^2(K)} \longrightarrow 0 \quad \text{ in probability}.
		\end{align}	
		
		\item \label{it:fluctuations} For every $g\in L^2(\R^3)$ with compact support, 
		\begin{align}
		m^{1/2} (g, u_m - u) \longrightarrow \xi[g]
		\end{align}
		in distribution, where $\xi$ is a Gaussian field with mean zero and covariance
		\begin{align} \label{eq:covaranaice}
		\begin{aligned}
		\E [\xi[g_1] \xi[g_2]] & =  \int_{\R^3\times\R^3}  \bigl((u(x)-v) \cdot (A g_1)(x)\bigr) \bigl((u(x)-v) \cdot (A g_2)(x)\bigr) f(\de x,\de v) \\
		& -  \left(\rho u  -j,A g_1\right)_{L^2}\left(\rho u -j,Ag_2\right)_{L^2}
		\end{aligned}
		\end{align}
		for all  $g_1, g_2 \in L^2(\R^3)$  with compact support.
	\end{enumerate}
\end{thm}
\begin{rem} \label{rem:main}
\begin{enumerate} [label*=(\roman*)]
		\item The analogous result holds when the Stokes equations are replaced by the Poisson equation. Also for the Poisson equation, the result is new, see the discussion in Section \ref{sec:discussion}.
  For the sake of conciseness, we do not state the result in a separate theorem but only point out the necessary adaptations: Instead of the Stokes equations \eqref{eq: CM}, \eqref{eq: CM_hom} and \eqref{eq: CM_lambda_hom} we consider Poisson equations and the quantities $V_i$ become scalars as well as $u_m, u, h, j$, etc.
  Moreover, reflecting that the capacity of a ball of radius $R$ is $4 \pi R$, one should replace \eqref{eq:R_m.Stokes} by
	\begin{align} \label{eq:R_m.Poisson}
		R_m = \frac 1 {4 \pi m}.
	\end{align}

    \item For the proof, we will show the following statement that implies (ii), see Theorem \ref{pro:tilde.u_m} and Propositions \ref{pro:xi_m}--\ref{pro:tau_m}: 
    Denoting $\Omega$ the underlying probability space, let $\sigma_m \in L^2(\Omega;L^2_{loc}(\R^3))$ be the i.i.d.\ random fields (cf. \eqref{def:tau_m}) given by
    \begin{align} \label{def.sigma}
        \sigma_i = m^{-1/2} \left(A(\rho u - j) - (A(u(X_i) - V_i) \delta_{X_i})\right)
     \end{align}
    and $\tau_m := \sum_{i=1}^m \sigma_i $. Then, for all $\beta < 1/2$
    \begin{align} \label{patwise.convergence}
        m^{1/\beta} \|m^{1/2}(u_m - u)  - \tau_m\|_{L^2(K)} \to 0 \quad \text{ in probability}.
    \end{align} 
     The assertion then follows from a standard CLT upon computing the covariance of $\sigma_1$.
     In classical stochastic homogenization of elliptic PDEs with oscillating coefficients, the analogue of the convergence \eqref{patwise.convergence} is known as \emph{pathwise structure of fluctuations} (see e.g. \cite{DuerinckxGloriaOtto20}). A heuristic explanation for this pathwise structure will be given in Section \ref{sec:Heuristics}.

	\item \label{it:nature.fluctuations} Formally, we can write $\xi = A \zeta$, where $\zeta$ accounts for the fluctuations of the drag force $j - \rho u$. The appearance of the second term on the right-hand side of \eqref{eq:covaranaice} is classical for the fluctuations in $m$-particle systems, see e.g. \cite{BraunHepp1977}, and is supposed to disappear if we modeled the particles by a Poisson Point Process instead.
In particular,  one can expect in this case, at least formally, $\xi = A \zeta$ with
\begin{align}
	\zeta = \left(\int (v-u)\otimes (v-u) f(\cdot, \de v)\right)^{\frac 1 2 } W, 
\end{align}
where $W$ is space white noise.	
This (and also the form of $\sigma_i$ in \eqref{def.sigma}) means that the fluctuations are solely caused by the fluctuations of the effective particle drag forces $V_i - u(X_i)$ due to the fluctuations of the positions and velocities of the particles. No other information on the Dirichlet boundary conditions is retained.
The fluctuations of the drag force are then transferred to fluctuations of the fluid velocity $u$ via the long range solution operator $A$ of the Brinkman equations.

\item
The rate of convergence in part (i) of Theorem \ref{th:main} is optimal in view of part (ii). More precisely,  since $\xi \neq 0$ in general, part (i) cannot hold for $\beta = 1/2$.
By interpolating the estimate in part (i) with the energy bound, one obtains a convergence in $H^s_\loc$ for any $s < 1$ with rate $m^{-\beta+s/2}$ for any $\beta < 1/2$, though. This might not be optimal, though. Indeed, we will show that the fluctuations $m^{1/2}(u_m-u)$ are bounded in $H^s_\loc$, $s < 1/2$ (cf. Proposition \ref{pro:tau_m}).
\end{enumerate}
\end{rem}

\subsubsection*{Possible generalizations}

We briefly comment on three aspects of possible generalizations and improvements of our main result. We address: 1) random radii of the particles; 2) more general  distributions of particle positions, 3) space dimensions different from $d = 3$; 4) different notions of probabilistic convergence in  part (i) of the main theorem.

\begin{enumerate}

\item Indeed, it is not difficult to extend the above result to the case where the radii of the particles are also random.
More precisely, assume that the radius of each particle is $R^m_i = r_i R_m$ with $R_m$ as in \eqref{eq:R_m.Stokes}, respectively. Assume that the radii $r_i$ are independent bounded random variables, also independent of the positions, with expectation $\E r =  1$.
Then, the assertions of Theorem \ref{th:main} still hold with an additional factor $\E r^2$ in front of the first term on the right-hand side of the covariance. In order not to further burden the presentation, we restrict our attention to the case of identical radii.

\item For the sake of simplicity, we restrict our analysis to $m$ i.i.d. particles.
As mentioned in Remark \ref{rem:main} (iii), we expect the result to extend to (inhomogeneous) Poisson Point Processes. Moreover, we expect similar results for sufficiently  mixing processes. For instance, assume that the $m$ particles are identically distibuted with $V_i = 0$ for all $1 \leq i \leq m$, i.e. $f = \rho \otimes \delta_0$ and let $\rho_2$ denote the $2$-particle correlation function, i.e., 
$\E (g(X_1,X_2)) = \int_{\R^3 \times \R^3} g(x_1,x_2) \dd \rho_2(x_1,x_2)$. Assume that 
the process is mixing in the sense of
\begin{align} \label{ass.correlations}
    \left|\rho_2\left(\frac{x_1}{m^{1/3}}, \frac{x_2}{m^{1/3}}\right) - \rho\left(\frac{x_1}{m^{1/3}}\right) \rho\left(\frac{x_2}{m^{1/3}}\right)  \right|\leq \left( 1+ \frac{|x_1-x_2|}{m^{1/3}}\right)^{-\beta} 
\end{align}
for some $\beta > 3$. Then,  $\tau_m = \sum_{i=1}^m \sigma_i$ with $\sigma_i$ as in \eqref{def.sigma} still has a bounded variance which seems necessary for the fluctuations to be of order $m^{-1/2}$. We point out, that the condition $\beta >3$ corresponds to the one under which the fluctuations have been shown to obey the central limit scaling in \cite{DuerinckxFischerGloria22} in the case of oscillating coefficients.
However, the probabilistic estimates in Section \ref{sec:proof.stochastic} involve expressions with up to $5$ different particles. Hence, more assumptions on the particle correlations are likely to be necessary when the particles are not independently distributed.

\item Regarding the space dimension, our analysis is restricted to the physically most relevant three-dimensional case. Applying the same techniques in dimension $d=2$ seems possible with additional technicalities due the usual issues regarding the capacity of a set in $d=2$.

We emphasize though that, for $d \geq 4$, we do not expect Theorem \ref{th:main} to continue to hold without structural changes. More precisely, we expect that in higher dimensions, the fluctuations occur at a higher rate (than $m^{-1/2}$). Moreover,  at leading order, we expect local effects to dominate rather than the long range fluctuations caused by the the fluctuations of the drag force in $d=3$ (cf. Remark \ref{rem:main} \ref{it:nature.fluctuations}).
The reason for this is that the volume occupied by the particles becomes too big.
Indeed, \rh{in order for the homogenized equation \eqref{eq: CM_hom} to remain the Brinkman equations,} the critical scaling  of the radius of $m$ spherical particles in dimension $d \geq 3$ is $R_m \sim m^{-1/(d-2)}$. The results cited above ensure that under this scaling we still have $u_m \rightharpoonup u$ weakly in $\dot H^1(\R^d)$.
However, in the case when the particle velocities are all zero, i.e. $f=\rho \otimes \delta_0$, we obtain as a trivial upper bound for the rate of convergence in $L^p_{\loc}$
\begin{align}
	\|u_m - u\|_{L^p_\loc(\R^3)} \geq \|u_m - u\|_{L^p(\cup_{i=1}^m B_i)} = \|u\|_{L^p(\cup_{i=1}^m B_i)} \sim \left(\L^d\left(\bigcup_{i=1}^m B_i\right)\right)^{\frac 1 p} \sim m^{-\frac{2}{p(d-2)}}.
\end{align}
This shows that Theorem \ref{th:main} cannot hold in this form for $d \geq 5$. Moreover, in dimension $d=4$, this error is of critical order, which suggests that the analysis of the fluctuations is much more delicate. 
One might expect, though, that the result remains true in $d \in \{4,5\}$ by changing the space $L^2_{\loc}$ to $L^p_{\loc}$ for $p$ sufficiently small such that $\|\1_{\cup_{i=1}^m B_i}\}\|_{L^p} \ll  m^{-1/2}$. However, inspecting more carefully that the effect of the Dirichlet condition at the particles decays like the inverse distance and that the  typical particle distance is $m^{-1/d}$, reveals that (with overwhelming probability)
\begin{align}
    \|u_m - u\|_{L^p_\loc(\R^3)} \gtrsim m^{-\frac 2 d}
\end{align}
for all $p < \tfrac{d}{d-2}$ (such that the fundamental solution of the Stokes equations is in $L^p_{\loc}$).

We do not only believe that the scaling of the fluctuations change but also there nature. Indeed, the  the long range fluctuations caused by the the fluctuations of the drag force in $d=3$ (cf. \eqref{def.sigma}) is not adapted to locally correct the failure of the Dirichlet boundary condition at the particles. Roughly speaking, the fluctuations in $d=3$ at a given point is to leading order a collective long range effect due to the fluctuations of \emph{all} particle positions and velocities. In $d \geq 5$, however, we expect the fluctuation to leading order to be a short range effect due to the fluctuation of the \emph{nearest} particle position and its velocity. For $d=4$, we expect both effects to be of the same order.

% Our techniques are restricted to dimension $d=3$ for another reason. Namely, we will several times use the fact that the fundamental solution to the  Stokes equations is in $L^2_\loc(\R^3)$ which is no longer true in higher dimensions.

\item 

Instead of convergence in probability, one could aim for convergence in $L^p$. Following the proof of the theorem reveals that we actually prove
\begin{align}
	\E_m[\1_{\mathcal O_{m,0,5}} \|u_m - u\|^2_{L^2_\loc}] \leq C m^{-1}.
\end{align}
This implies $\E_m[\|u_m - u\|_{L^2_\loc}] \leq C m^{-1/6}$
by Lemma \ref{lem:prob.distances}, provided an a priori bound 
$\E_m[\|u_m - u\|^2_{L^2_\loc}] \leq C$. Such a bound has been obtained in \cite{CarrapatosoHillairet20}. Although different particle distributions are considerd in \cite{CarrapatosoHillairet20}, one readily checks that \cite[Lemma 3.4]{CarrapatosoHillairet20} also implies such an a priori estimate in our setting.
Again, the power $m^{-1/6}$ is presumably not optimal and one could aim for an estimate $\E_m[\|u_m - u\|^2_{L^2_\loc}] \leq C m^{-1}$. 
Following our present approach, one would need to adapt the approximation that we use for $u_m$ in the set $ \mathcal O_{m,0,5}$. The adaptation needs to take into account in a more precise way the geometry of the particle configuration and one could take inspiration from the proof of \cite[Lemma 3.4]{CarrapatosoHillairet20}. However, it seems unavoidable that this approach would drastically increase the technical part of our proof.

\end{enumerate}
\subsubsection*{Comments on assumption \ref{ass:energy}--\ref{ass:j}}

 The second moment bound in the first assumption, \ref{ass:energy}, is very natural. It ensures that the solution $u_m$ is bounded in $L^2(\Omega;\dot H^1(\R^3))$, where $\Omega$ denotes the probability space. Moreover, the covariance of the fluctuations provided in Theorem \ref{th:main} involve this second moment.

The regularity assumptions on $\rho$ and $j$, \ref{ass:rho}--\ref{ass:j}, are of more technical nature: they ensure that both $j$ and $\rho u$, which appear in the Brinkman equations \eqref{eq: CM_hom}, lie in $\dot H^1(\R^3) \cap \dot H^{-1}(\R^3)$. The $\dot H^{-1}$ property  will be very useful to treat those terms as source terms of the Stokes equations. On the other hand, the $H^1$-regularity allows us to quantify the differences of those terms to some discrete and averaged versions involved in the setup of appropriate approximations for $u_m$ that we detail in Section \ref{sec:Heuristics}.

\subsection{Discussion of related results}

\subsubsection*{Previous results on the derivation of the Brinkman equations} \label{sec:discussion}

As indicated at the beginning of this introduction, there is a huge literature on the derivation of the Brinkman equations and corresponding results for the Poisson equation where one could mention 
for instance \cite{MarchenkoKhruslov74,CioranescuMurat82a,papvar.tinyholes, Ozawa83, DalMasoGarroni.punctured, GiuntiHoferVelazquez18}. 
For a more complete list and discussion of this literature, we refer the reader to \cite{GiuntiHoferVelazquez18,GiuntiHoefer19}.

In \cite{GiuntiHoefer19,CarrapatosoHillairet20}, the Brinkman equations have been derived under very mild assumptions on the particle configurations. In \cite{GiuntiHoefer19}, the authors considered zero particle veolcities. The particle positions can be distributed to rather general stationary processes, and the radii are i.i.d. with only a $(1+ \beta)$ moment bound. This allows for many clusters of overlapping particles. A corresponding result for the Poisson equation has been obtained in \cite{GiuntiHoferVelazquez18}.

On the other hand, in \cite{CarrapatosoHillairet20}, the particle radii are identical but their velocities are not necessarily zero. The authors consider more general particle distributions than i.i.d. configurations. The Brinkman equations are derived in this setting under assumptions including a $5$th moment bound of the velocities.
The result in \cite{CarrapatosoHillairet20} comes with an estimate of the convergence rate $u_m \to u$ in $L^2_\loc$. However, this does not allow to deduce convergence faster than $m^{-\beta}$ with $\beta < 1/95$.

\subsubsection*{Results about explicit approximations for $u_m$}

A widespread approach  to homogenization of the Poisson and Stokes equations in perforated domains with homogeneous Dirichlet boundary conditions  is the so-called method of oscillating testfunctions which is used for instance in \cite{CioranescuMurat82a,Allaire90a}. An oscillating testfunction $w_m$ is constructed in such a way that it vanishes in the particles and converges to $1$ weakly in $H^1_\loc$. This function $w_m$ carries the information of the capacity (or resistance) of the particles.
 A natural question is then, how well $w_m u$ approximates $u_m$. Since the function $w_m$ is usually constructed explicitly, 
 this allows for an explicit approximation for $u_m$.
 In \cite{KacimiMurat89, Allaire90a} it is shown that for periodic configurations $\|u_m - w_m u\|_{\dot H^1} \leq C m^{-1/3}$. This error is of the order of the particle distance and thus  the optimal error that one can expect due to the discretization. Similar results have been obtained in \cite{Giunti20} for the random configurations studied in \cite{GiuntiHoferVelazquez18}, with a larger error due to particle clusters. 

In the recent papers \cite{Feppon22, FepponWenjia21}, higher order approximations for the Poisson and the Stokes equations in periodically perforated domains are analyzed.
 
In the present paper, we do not work with oscillating test functions. However, we derive equally explicit approximations 
for $u_m$ which we will denote by $\tilde u_m$ (see Section \ref{sec:Heuristics}).
As we will show in Theorem~\ref{pro:tilde.u_m} , we have $\|u_m - \tilde u_m \|_{\dot H^1} \leq C m^{-\beta}$ for all $\beta < 1$. This error is much smaller than the one obtained in \cite{KacimiMurat89, Allaire90a}. The reason for that is twofold. First, we take into account the leading order discretization error in terms of fluctuations. Second, we benefit from the randomness which reduces the higher order dicretization errors on average.
We believe that Theorem \ref{pro:tilde.u_m} could be of independent interest. In particular concerning the rigorous derivation of the Vlasov-Stokes equations \eqref{eq:Vlasov.Stokes}, such explicit accurate approximations of $u_m$ in good norms seem essential. Indeed, for the related derivation of the transport-Stokes system for inertialess suspensions in \cite{Hofer18MeanField}, corresponding approximations have been crucial.

\subsubsection*{Related results concerning fluctuations and preliminary comments on our proof}

In the classical theory of stochastic homogenization of elliptic equations with oscillating coefficients, the study of fluctuations has been a very active research field in recent years. Of the vast literature, one could mention for example   \cite{ArmstrongKuusiMourrat17, DuerinckxGloriaOtto20, DuerinckxGloriaOtto20a, DuerinckxFischerGloria22}.

Regarding the homogenization in perforated domains, the literature is much more sparse. In the recent paper \cite{DuerinckxGloria21}, the authors were able to adapt some of the techniques of quantitative stochastic homogenization of elliptic equations with oscillating coefficients to the Stokes equations in perforated domains with sedimentation boundary conditions which are different from the ones considered here.

Related results to Theorem \ref{th:main} have been obtained in \cite{FigariOrlandiTeta85} for the Poisson equation and
in \cite{Rubinstein1986} for the Stokes equations. 
However, in these papers, the authors were only able to treat the Poisson and the Stokes equations corresponding to \eqref{eq: CM} with an additional large massive term $\lambda u_m$: they obtained a result corresponding to Theorem \ref{th:main} provided that $\lambda$ is sufficiently large (depending on $\rho$).

The approach in \cite{FigariOrlandiTeta85,Rubinstein1986} follows the approximation of the solution $u_m$ by the so-called method of reflections.
The idea behind this method is to express the solution operator of the problem in the perforated domain in terms of the solutions operators when only
one of the particles is present. More precisely, let $v_0$ be the solution of the problem in the whole space without any particles. Then, define 
$v_1 = v_0 + \sum_i v_{1,i}$ in such a way that $v_0 + v_{1,i}$ solves the problem if $i$ was the only particle. Since $v_{1,i}$ induces an error in $B_j$ for $j \neq i$, one adds further functions $v_{2,i}$, this time starting from $v_1$.
Iterating this procedure yields a sequence $v_k$. In general, $v_k$ is not convergent. With the additional massive term though,
one can show that the method of reflections does converge, provided that $\lambda$ is sufficiently large.

In \cite{HoferVelazquez18}, the first author and 
Vel\'azquez showed how the method of reflections can be modified to ensure convergence without a massive term and how this modified method can be used to obtain convergence results for the homogenization of the Poisson and Stokes equations.
In order to study the fluctuations, a high accuracy of the approximation of $u_m$ is needed. This would make it necessary to analyze many of the terms arising from the modified method of reflections which we were allowed to disregard for the qualitative convergence result of $u_m$ in \cite{HoferVelazquez18}.
It seems very hard to control sufficiently well these additional terms which either do not arise or are of higher order for the (unmodified) method of reflections used in \cite{FigariOrlandiTeta85,Rubinstein1986}.

Thus, in the present paper, we do not use the method of reflections but follow an alternative approach to obtain an approximation for $u_m$.
Again, we approximate $u_m$ by $\tilde u_m = w_0 + \sum_i w_{i}$, where $w_{i}$ solves the homogeneous Stokes equations outside of $\overline B_i$. However, we do not take $w_{i}$ as in the method of reflections, where it is expressed in terms of $w_0$.
Instead $w_{i}$ will depend on $u$,  exploiting that we already know that $u_m$ converges to $u$. In contrast to the approximation obtained from the method of reflections, we will be able to choose $w_{i}$ in such a way that the approximation  $\tilde u_m = w_0 + \sum_i w_{i}$ is sufficient to capture the fluctuations.

A related approach has recently been used in a parallel work by G\'erard-Varet in \cite{Gerard-Varet19} to give a very short proof of the homogenization 
result $u_m \rightharpoonup u$ weakly in $\dot H^1$ under rather mild assumptions on the positions of the particles. 
However, since we study the fluctuations in this paper, we need a more refined approximation than the one used in \cite{Gerard-Varet19}.
More precisely, to leading order, the function $w_{i}$ will  only depend on $V_i$ and the value of $u$ at $B_i$.
However, $w_i$ will also include a lower-order term which is still relevant for the fluctuations. As we will see, this lower-order term will  depend in some way on the fluctuations of the positions of all the other particles.

\subsection{Organization of the paper}

The rest of the paper is devoted to the proof of the main result, Theorem \ref{th:main}. 

In Section \ref{sec:Heuristics}, we give a precise definition of the  approximation $\tilde u_m = w_0 + \sum_i w_{i}$, outlined in the paragraph above,
as well as a heuristic explanation for this choice.

In Section \ref{sec:proof.main}, we state three key estimates regarding this approximation and show how the proof of Theorem \ref{th:main} follows from these estimates.

The proof of these key estimates contains a purely analytic part as well as a stochastic part which are given in Sections \ref{sec:proof.analytic}
and \ref{sec:proof.stochastic}, respectively.

\section{The approximation for the microscopic solution \texorpdfstring{$u_m$}{um}} \label{sec:Heuristics}

\subsection{Notation} \label{sec:MoreNotation}

We introduce the following notation that is used throughout the paper.

We denote by $G \colon \dot H^{-1}(\R^3) \to \dot H^1(\R^3)$ the solution operator for the Stokes equations.
%\begin{align}\label{eq: Poisson.whole.space}
%	- \Delta v = f  \quad \text{in } \R^3,
%\end{align}
This operator is explicitly given as a convolution operator with kernel $g$, the fundamental solution to the Stokes equations, i.e.,
	\begin{align} \label{eq:fund.sol}
		g(x) = \frac{1}{8 \pi}\left( \frac{\mathrm{Id}}{|x|} + \frac{x \otimes x}{|x|^3} \right).
	\end{align}
%Then, $G$ is a bounded operator from $\dot H^s$ to $\dot H^{s+2}$ for all $s \in \R$.

We recall from Theorem \ref{th:main} that $A \colon \dot H^{-1}(\R^3) \to \dot H^1(\R^3)$ is the solution operator for the limit problem \eqref{eq: CM_lambda_hom}.
We observe the identities
\begin{align} \label{eq:relation.A.G}
	(1 + G\rho) A = G, && A (1+\rho G) = G, && A = G - A \rho G.
\end{align}
We remark that multiplication by $\rho$ maps from $\dot H^1(\R^3)$ to $H^1(\R^3) \cap \dot H^{-1}(\R^3)$. Indeed, this follows from $\rho \in W^{1,\infty}(\R^3)$ with compact support and the fact that
$\dot H^1(\R^3) \subset L^6(\R^3)$ which implies $L^{6/5}(\R^3) \subset \dot H^{-1} (\R^3)$.
%
%The solution operators \(G\) to the Poisson equation (or Stokes equations) is an isometry from $\dot H^{-1} (\R^3)$ to $\dot H^1(\R^3)$ (or from $  \dot H^{-1}_\sigma (\R^3)$ to $\dot H^1_\sigma(\R^3)$ for the Stokes equations). \(A\), the solution operator to the homogenized equation \(\eqref{eq: CM_hom}\) (or \eqref{eq: CM_lambda_hom}, for the Stokes equations), is a bounded operator from $\dot H^{-1} (\R^3)$ to $\dot H^1(\R^3)$ (or from $  \dot H_{\sigma}^{-1} (\R^3)$ to $\dot H_{\sigma}^1(\R^3)$ for the Stokes equations).
Furthermore, observe that $A$ and $G$ are bounded operators from $L^2(\R^3) \cap H^{-1}(\R^3)$ to $C^{0,\alpha}(\R^3)$, $\alpha \leq 1/2$, and from \(H^1(\R^3)\cap H^{-1}(\R^3)\) to \(W^{1,\infty}(\R^3)\). In particular,
$A \rho$ and $G \rho$ are  bounded operators from $L^2(\supp \rho)$ (and in particular from $\dot H^1(\R^3)$) to $L^\infty(\R^3)$ and from $\dot H^1(\R^3)$ to $W^{1,\infty}(\R^3)$. 

We denote $G^{-1}= -\Delta$. Then we have $G G^{-1} = G^{-1} G = P_\sigma$, where $P_\sigma$ is the projection to the divergence free functions. In fact,  we will use $G^{-1}$ in the expression $A G^{-1}$ only. We observe that $A = A P_\sigma$ and thus
\begin{align}
	A G^{-1} G = A.
\end{align}

We denote by \(B^m(x) = B_{R_m}(x)\) and the normalized Hausdorff measure on the sphere $\partial B^m(x)$ by
\begin{align} \label{def:dirac.sphere}
	\delta^m_x:=  \frac{ \H^2|_{\partial B^m(x)}}{\H^2(\partial B^m(x))},
\end{align}
and write \(\delta^m_i:=\delta^m_{X_i}\).

Moreover, we denote for any function $\varphi \in L^1(B^m(x))$ the average on $B^m(x)$ by $(\varphi)_x$, i.e.
\begin{align} \label{def:average.ball}
	(\varphi)_x := \fint_{B^m(x)} \varphi(y) \dd y :=  \frac{1}{|B^m(x)|} \int_{B^m(x)} \varphi(y) \dd y,
\end{align}
and we abbreviate $(\varphi)_i := (\varphi)_{X_i}$.

We will need a cut-off version of the fundamental solution. To this end, let \(\eta\in C^{\infty}_c(B_3(0))\) with \(\1_{B_2(0)} \leq \eta \leq \1_{B_3 (0)}\) and $\eta_m (x) := \eta(x/R_m)$. 
%For the Poisson equation, we define $G^m$ as the convolution operator with kernel
%	\begin{align} \label{def:G^m_lambda}
%		{g}^m = (1-\eta_m) g,
%	\end{align}
%where 	$g$ is the fundamental solution of the Poisson given in equation \eqref{eq:fund.sol}.
Now consider \(\tilde{g}_m=(1-\eta_m)g\). We need an additional term in order to make $\tilde{g}^m$ divergence free. This is obtained through the classical Bogovski operator 
(see e.g. \cite[Theorem 3.1]{Galdi11})
which provides the existence of a sequence
 $\psi_m \in C_c^\infty(B_{3 R_m} \setminus B_{2 R_m})$  such that
$\dv \psi_m = \dv (\eta_m g)$ and 
\begin{align} \label{eq:Bogovski.estimate}
	\|\nabla^k \psi_m\|_{L^p(\R^3)} \leq C(p,k) \| \nabla^{k-1} \dv (\eta_m g)\|_{L^p(\R^3)}
\end{align}
for all $1 < p < \infty$ and all $k \geq 1$.
By scaling considerations, the constant $C$ is independent of $m$.
Then, we define $G^m$ as the convolution operator with kernel 
\begin{align} \label{eq:g^m.Stokes}
	g^m = (1-\eta_m) g + \psi_m.
\end{align}

\subsection{Approximation of \texorpdfstring{$u_m$}{um} using monopoles induced by \texorpdfstring{$u$}{u}}

To find a good approximation for $u_m$, we observe that $u_m$ satisfies
\begin{align} \label{eq:u_m.whole.space}
	- \Delta u_m + \nabla p = h \1_{\Omega_m} + \sum_i h_i, \quad \text{in } \R^3
\end{align}
for some functions $h_i \in \dot H^{-1}(\R^3)$, each supported in $\overline B_i$, which are the force distributions induced in the particles due to the Dirichlet boundary conditions.

We begin by observing that for most of the configurations of particles, the particles are sufficiently separated
which allows us to  determine good approximations for $h_i$ by ignoring its direct interaction with another particle. As we will see, our approximation for $h_i$ will only incorporate the effect of the other particles 
through the limit $u$.

To be more precise, let $0 < \nu < 1/3$. Then, by Lemma \ref{lem:prob.distances}, we know that, for most of the particles, $B_{m^{\nu} R_m }(X_i)$ only contains the particle $B_i$.
In this case, $h_i$ is uniquely determined by the problem
\begin{align} \label{eq:v_i}
	\left\{\begin{array}{rl}
		-\Delta v_{i} + \nabla p   =  h \quad &\text{in } B_{m^{\nu} R_m }(X_i) \setminus \overline{B_i}, \\
		v_{i}  = V_i  \quad &\text{in }  \overline B_i , \\
		v_i = u_m  \quad &\text{on }  \partial B_{m^{\nu} R_m }(X_i).
	\end{array} \right.
\end{align}
We simplify this problem to derive an approximation for $h_i$. First, we drop the right-hand side $h$ in \eqref{eq:v_i}. Its contribution is expected to be negligible, since the volume of $B_{m^{\nu} R_m }(X_i) \setminus \overline{B_i}$ is small compared to the difference of the boundary data at $\partial B_i$ and $\partial  B_{m^{\nu} R_m }(X_i)$ which is typically of order $1$.
Next, we know that typically $\partial B_{m^{\nu} R_m }(X_i)$ is very far from any particle. Since $u_m \wto u$ in $\dot H^1(\R^3)$, we therefore replace \eqref{eq:v_i} by
\begin{align}\label{eq:v_i.app}
	\left\{\begin{array}{rl}
		-\Delta v_{i} + \nabla p =  \R^3 \setminus \overline{B_i}, \\
		v_{i}  = V_i \quad &\text{in }  \overline B_i , \\
		v_i(x) \to (u)_{i} \quad &\text{as }  |x - X_i| \to \infty.
	\end{array} \right.
\end{align}
Here, we could also have chosen $u(X_i)$ instead of $(u)_i$. The precise choice that we make will turn out to be convenient later.
By our choice of $R_m$ in  \eqref{eq:R_m.Stokes}, the explicit solution of \eqref{eq:v_i.app}  is given by $v_i$ which solves $- \Delta v_i + \nabla p = h_i$ in $\R^3$ with
\begin{align}
	h_i = \frac {V_i - (u)_i} m  \delta^m_i.
\end{align}
Therefore, resorting to \eqref{eq:u_m.whole.space}, we  are led to approximate $u_m$ by
\begin{align} \label{eq:def.tilde.u_m.no.fluc}
	\tilde u_m := G\left[h - \frac 1 m \sum_{i=1}^m {((u)_i - V_i)} \delta^m_i\right].
\end{align}
We emphasize that for this approximation it is not important to know the function $u$. We only used that $u_m \wto u$ in $\dot H^1(\R^3)$
which is always true for a subsequence by standard energy estimates.
On the contrary, we can now identify the limit $u$. Indeed, if we believe that $\tilde u_m$ approximates $u_m$ sufficiently well,
\begin{align} \label{eq:u.a.posteriori}
	u \leftharpoonup   u_m \approx \tilde u_m = G\left[h - \frac 1 m \sum_{i=1}^m {((u)_i - V_i)} \delta^m_i\right] \wto G[h + j- \rho u]
\end{align}
which shows that $u$ indeed solves \eqref{eq: CM_hom}.

This approximation $\tilde u_m$  cannot fully capture the fluctuations, though. 
In the next subsection we thus show how to refine this approximation.

We end this subsection by comparing this approximation to the one  used in \cite{FigariOrlandiTeta85,Rubinstein1986} through the method of reflections. The first order approximation of the method of reflections is given by $\tilde u_m$ as defined in \eqref{eq:def.tilde.u_m.no.fluc} but with $G h$ instead of $u$ on the right-hand side. Since this is a much cruder approximation,
one needs to iterate the approximation scheme. This only yields a convergent series in \cite{FigariOrlandiTeta85,Rubinstein1986} due to the additional large massive term. On the other hand, this series then approximates $u_m$ sufficiently well without the refinement that we introduce in the next
subsection.

\subsection{Refined approximation to capture the fluctuations}

We make the ansatz that, macroscopically,
	\begin{align} \label{eq:u_m.xi_m}
		u_m = u + m^{-\frac{1}{2}} \xi_m + o(m^{-\frac{1}{2}}),
	\end{align}
where $\xi_m$ is a random function which needs to be determined.
We assume that the fluctuations $\xi_m$ are in some sense macroscopic, just as $u$, such that we can follow the same approximation scheme as in the previous subsection.

More precisely, we adjust the Dirichlet problem \eqref{eq:v_i.app} by adding  $m^{-\frac{1}{2}} (\xi_m)_i$ on the right-hand side of the third line.
This leads to the definition
\begin{align} \label{def:tilde.u_m}
	\tilde{u}_m := G\left[ h - \frac 1 m \sum_{i=1}^{m} {(u - V_i +m^{-\frac{1}{2}} \xi_m)_i} \delta^m_i \right].
\end{align}

We have not defined $\xi_m$ yet. To make a good choice for $\xi_m$, the idea is to use a similar argument as in \eqref{eq:u.a.posteriori}
but only to take the limit $m \to \infty$ in terms which are of lower order.
More precisely, we observe, again taking for granted that $\tilde u_m$ approximates $u_m$ sufficiently well
and using \(u = G(h + j - \rho u) \),
\begin{equation} \label{eq:xi_m.first}
\begin{aligned}
	u + m^{-1/2} \xi_m & \approx u_m \approx \tilde u_m = G\left[ h - \frac 1 m \sum_{i=1}^{m} (u - V_i +m^{-\frac{1}{2}} \xi_m)_i \delta^m_i \right] \\
	&= u + G\left[ \rho u - j - \frac 1 m \sum_{i=1}^{m} ((u)_i - V_i) \delta^m_i \right] - G\left[ \frac 1 m \sum_{i=1}^{m} (m^{-\frac{1}{2}}  \xi_m)_i \delta^m_i\right]. \\
%	& \approx u + G\left[ Vu - \frac 1 m \sum_{i=1}^{m}{(u)_i} \delta^m_i \right] - G\left[ m^{-1/2} V \xi_m \right].
\end{aligned}
\end{equation}

We expect
\begin{align}\label{eq:fluc.xi_m}
	G\left[\sum_{j\neq i} \frac{m^{-\frac{1}{2}}(\xi_m)_j}{m} \delta^m_j \right] = G(\rho m^{-\frac{1}{2}}\xi_m) + O(m^{-1}).
\end{align}

Inserting this into \eqref{eq:xi_m.first}, leads to
\begin{align}\label{eq:xi_m.wrong}
	m^{-1/2} \xi_m +  G(\rho m^{-\frac{1}{2}}\xi_m)  \approx G\left[ \rho u - j -  \frac 1 m \sum_{i=1}^{m} ((u)_i - V_i) \delta^m_i \right].
\end{align}

This equation could be used as a definition of $\xi_m$.
Although this turns out to be a good approximation on the level of equation \eqref{eq:u_m.xi_m}, we will now argue that this is not the case for the definition of $\tilde u_m$ in \eqref{def:tilde.u_m}.
Indeed,  the right-hand side of \eqref{eq:xi_m.wrong} is equal to $(u)_i - V_i$ in $B_i$ to leading order.
Hence, $(m^{-1/2}\xi_m)_i$ would be of the same order which would yield a contribution to $\tilde u_m$ through $\xi_m$ of order $1$ instead of order $m^{-1/2}$.

Therefore, we need to be more careful and go back to microscopic considerations:
Since $u_m = V_i$ in $B_i$ and $\tilde u_m \approx u_m$, we want to define $\xi_m$ in such a way that $\tilde{u}_m \approx V_i$ in $B_i$.
Thus we want to compute $\tilde u_m$ in $B_i$ in order to find a good definition of $\xi_m$. Since we expect $\tilde u_m = \tilde u_m (X_i) + O(m^{-1})$ in
$B_i$  (at least on average), we only compute $\tilde u_m(X_i)$, and by the same reasoning, we replace any average $(\xi_m)_i$ by $\xi_m(X_i)$ at will.
Then, we find, using again $u = G(h + j - \rho u)$,
\begin{equation} \label{eq:tilde.u_m(w_i)}
\begin{split}
	\tilde{u}_m(X_i) & \approx u(X_i) + (G (\rho u - j))(X_i) - u(X_i) + V_i - m^{-\frac{1}{2}}\xi_m(X_i) \\
	& \qquad - G\left[\frac 1 m \sum_{j\neq i} {(u - V_j +m^{-\frac{1}{2}} \xi_m)_j} \delta^m_j \right](X_i) \\
	& =V_i  -m^{-\frac{1}{2}} \xi_m(X_i) + G\left[\rho u - j -  \frac 1 m \sum_{j\neq i} ((u)_j - V_j  +m^{-\frac{1}{2}} \xi_m)_j) \delta^m_j\right](X_i) . 
\end{split}
\end{equation}
Requiring $\tilde{u}_m(X_i) = V_i$ yields
\begin{equation} \label{eq:xi_m.0}
m^{-\frac{1}{2}} \xi_m(X_i) + G\left[ \frac 1 m \sum_{j\neq i} {m^{-\frac{1}{2}} (\xi_m)_j}\delta^m_j \right](X_i) =  G\left[\rho u - j - \frac 1 m \sum_{j\neq i} ((u)_j - V_j) \delta^m_j\right](X_i).
\end{equation}
In order to define $\xi_m$ from this equation, we want the sum on the right-hand side to include $i$ such that the function is the same for every $i$. 
To this end, we notice that by Lemma \ref{lem:prob.distances}, with high probability, we have for all $i$ and all $W \in \R^3$
	\begin{align} \label{eq:prop.G^m_lambda}
		G^m \delta^m_i W = 0 \quad \text{in } B_i, \qquad G \delta^m_j W = G^m \delta^m_j W \quad \text{in } B_i  \quad \text{for all } j \neq i,
	\end{align}
where $G^m$ is the operator introduced at the end of Section \ref{sec:MoreNotation}.
Hence, we replace the right-hand side of \eqref{eq:xi_m.0} by
	\begin{align} \label{def:Theta}
		m^{-\frac{1}{2}} \Theta_m := G (\rho u - j ) -  \frac{1}{m} \sum_{i=1}^{m} {G}^m \left(((u)_i - V_i)\delta^m_i\right).
	\end{align}
We expect $\Theta_m \sim 1$ since the right-hand side of \eqref{def:Theta} represents the fluctuations of the discrete approximation of $G (\rho u - j)$.
As before, we replace the sum on the left-hand side of \eqref{eq:xi_m.0} by $\rho \xi_m$.
Combining these approximations leads to
\begin{align} \label{eq:relation.xi.Theta}
m^{-\frac{1}{2}}(1 + G \rho)  \xi_m = m^{-\frac{1}{2}} \Theta_m.
\end{align}
In view of \eqref{eq:relation.A.G}, it holds \((1+G\rho)AG^{-1} = P_\sigma\). Since, $\Theta_m$ is divergence free, \eqref{eq:relation.xi.Theta} leads to  define \(\xi_m\) to be the solution of 
\begin{align} \label{def:xi}
	\xi_m = AG^{-1} \Theta_m.
\end{align}
Note that the only difference between this definition of $\xi_m$ and \eqref{eq:xi_m.wrong} is the replacement of $G$ by $G^m$.
As mentioned above, we expect that, on a macroscopic scale, the operators $G$ and $G^m$ are almost the same (we will make this argument rigorous in Lemma \ref{lem:G-1Gm}). Therefore, in equation \eqref{eq:u_m.xi_m}, we expect, that it does not play a role (in $L^2_\loc(\R^3)$)
whether we take $G$ or $G^m$.
Consequently, as an  approximation for $\xi_m$, we introduce
\begin{align} \label{def:tau_m}
	\tau_m &:= AG^{-1} \tilde \Theta_m, \\
	m^{-1/2} \tilde \Theta_m &:= G (\rho u - j)  - \frac 1 m \sum_{i=1}^m G((u(X_i) - V_i) \delta_{X_i}).
\end{align}
This function bears the advantage that it is the sum of i.i.d. random variables.
Hence, it is straightforward to study the limit properties of $\tau_m[g] := (g,\tau_m)$. Notice that we both replaced the average \((u)_i\) by the value in the center of the ball \(u(X_i)\) and \(\delta^m_i\) by \(\delta_{X_i}\). Since \(u\in\dot{H}^1(\R^3)\), \(\tau_m\) is not defined for every realization of particles. However, as we will see, it is well-defined as an \(L^2\)-function on the probability space with values in \(L^2_{\loc}(\R^3)\).

\section{Proof of the main result} \label{sec:proof.main}

The first step of the proof is to rigorously justify the approximation of $u_m$ by $\tilde u_m$,
defined in \eqref{def:tilde.u_m} with $\xi_m$ and $\Theta_m$ as in \eqref{def:xi} and \eqref{def:Theta}.

\begin{thm} \label{pro:tilde.u_m}
	For all $\eps > 0$ and all $\beta<1$%\footnote{$\beta = 1/2$ is sufficient?} 
	\begin{align}
		\lim_{m \to \infty} \P_m \left[m^{\beta} \|u_m - \tilde u_m\|_{\dot H^1(\R^3)} > \eps \right] \to 0.
	\end{align}
\end{thm}
The next step is to show that we actually have 
\begin{align}
	\tilde u_m = u + m^{-1/2} \xi_m + o(m^{-1/2})
\end{align}
which was the starting point of our heuristics, i.e. \(\xi_m\) indeed 
describes the fluctuations of \(\tilde{u}_m\) around \(u\). In contrast to 
Theorem \ref{pro:tilde.u_m}, we can only expect local \(L^2\)-estimates since not even \(u_m-u\) is small in the strong topology of  \(\dot H^1(\R^3)\).

\begin{prop}\label{pro:xi_m}
	For all \(\eps>0\), all bounded sets \(K'\subset \R^3\) and all \(\beta< 1\)
		\[\lim\limits_{m\to\infty} \P_m\left[m^{\beta}\|\tilde u_m - u - m^{-1/2} \xi_m\|_{L^2(K')}> \eps \right] \to 0.\]
% 	\textcolor{magenta}{New result:}
	
% 	For all \(\eps>0\), all bounded sets \(K'\subset \R^3\), all \(0 \leq s < \frac{1}{2}\) and all \(\beta< 1\)
% 	\[\lim\limits_{m\to\infty} \P_m\left[m^{\beta-s}\|\tilde u_m - u - m^{-1/2} \xi_m\|_{H^s(K')}> \eps \right] \to 0.\]
\end{prop}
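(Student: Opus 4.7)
My plan is to begin with an exact algebraic identity. Using the definitions \eqref{def:tilde.u_m}, \eqref{def:xi}, \eqref{def:Theta}, the formula $u = G(f - Vu)$, and the resolvent identity $(1+GV)\xi_m = \Theta_m$ (which follows from \eqref{eq:relation.A.G} together with the fact that $\Theta_m$ is divergence-free in the Stokes case so that $GG^{-1}\Theta_m = \Theta_m$), a direct computation yields
\begin{align}
\tilde u_m - u - m^{-1/2} \xi_m \;=\; \mathrm{I}_m + m^{-1/2}\mathrm{II}_m,
\end{align}
where
\begin{align}
\mathrm{I}_m := \frac{1}{m}\sum_{i=1}^{m}(G^m - G)\bigl((u)_i \delta^m_i\bigr), \qquad \mathrm{II}_m := GV\xi_m - \frac{1}{m}\sum_{i=1}^{m} G\bigl((\xi_m)_i \delta^m_i\bigr).
\end{align}
The point of the manipulation is that the macroscopic contribution $GVu$ cancels, leaving only the short-range correction $\mathrm{I}_m$ that arises from replacing $G$ by $G^m$ in the definition of $\Theta_m$, and a Monte-Carlo type fluctuation $\mathrm{II}_m$.

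For $\mathrm{I}_m$, the kernel $g^m - g = -\eta_m g$ (plus the Bogovski piece $\psi_m$ in the Stokes case, controlled by \eqref{eq:Bogovski.estimate}) is supported in $B_{3R_m}(0)$ with $L^\infty$-norm $\lesssim R_m^{-1}$. Thus each summand is supported in $B_{4R_m}(w_i)$ with $L^2$-norm $\lesssim |(u)_i|\, R_m^{1/2}$. On the event $W_{m,\nu,L}$ of Lemma \ref{lem:prob.distances}, which has probability tending to one, these supports are pairwise disjoint, only $O(m)$ of them meet a neighborhood of $K'$, and $u$ is uniformly bounded on $\supp V$. Summing then gives $\|\mathrm{I}_m\|_{L^2(K')} \lesssim m^{-1}$, which is better than the required rate.

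For $\mathrm{II}_m$, the heuristic is that $\mathrm{II}_m$ is the standard Monte-Carlo error in approximating the deterministic integral $GV\xi_m$ by the empirical sum $\frac{1}{m}\sum G(\xi_m(w_i)\delta_{w_i})$, so that its $L^2(K')$-variance should be $O(1/m)$, and hence $\|m^{-1/2}\mathrm{II}_m\|_{L^2(K')} = O(m^{-1})$. The obstacle is that $\xi_m$ is itself random and depends on the point configuration through $\Theta_m$. My plan is threefold: first, establish an a priori bound $\|\xi_m\|_{L^\infty(K'')} \leq C(\omega)$ with $C(\omega)$ stochastically bounded, by using that $\xi_m = AG^{-1}\Theta_m$, that $A$ maps into $L^\infty$, and that $\Theta_m$ has a uniformly bounded second moment in the relevant norm (a CLT-type statement, itself a sum of i.i.d.\ contributions); second, replace $(\xi_m)_i$ by $\xi_m(w_i)$ and $\delta^m_i$ by $\delta_{w_i}$ at a cost $O(R_m)$ per ball, exploiting the H\"older regularity of $\xi_m$; third, perform a conditional second-moment computation for the resulting empirical error, decoupling $\xi_m$ from the configuration by comparing with the i.i.d.\ surrogate $\tau_m$ of \eqref{def:tau_m}, using that the influence of each individual $w_i$ on $\xi_m$ is of order $m^{-1}$ in any reasonable norm so that only a lower-order perturbation of the variance is produced. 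Chebyshev's inequality then converts the resulting $L^2_\omega L^2_x$-bound into the convergence in probability claimed in the proposition. The decoupling step in the third item is the main technical difficulty, and quantifying $\xi_m - \tau_m$ in $L^\infty(K'')$ is what I expect to be the most delicate part of the proof.
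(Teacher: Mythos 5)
Your starting identity is exactly the paper's: with your notation, $\mathrm{I}_m = \Lambda_m$ and $m^{-1/2}\mathrm{II}_m = \widetilde\Xi_m$ as in \eqref{def:Lambda_m} and \eqref{def:Xi_m.tilde}, and the Chebyshev-plus-good-event reduction is also how the paper proceeds. Two problems remain. A minor one first: your bound for $\mathrm{I}_m$ invokes ``$u$ uniformly bounded on $\supp V$'', but $f$ is only assumed in $\dot H^{-1}(\R^3)$, so $u \in \dot H^1(\R^3)$ need not be locally bounded and $(u)_i$ can be as large as $m^{1/2}\|u\|_{L^6(B_i)}$. The paper avoids this by estimating $\E_m\|\Lambda_m\|^2_{L^2_\loc}$ instead of a pathwise bound: the diagonal terms are controlled by $\int V(y)(u)_y^2\,\d y \lesssim \|u\|^2_{L^2(K)}$ and the cross terms by the operator bound \eqref{eq:G-G^m.H^k.to.H^k} applied to $\V u$ (see the proof of Lemma \ref{lem:xi.rho}); your argument is fixable along those lines.

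The decisive gap is the estimate of $\mathrm{II}_m$, i.e. $\E_m[\1_{W_m}\|\widetilde\Xi_m\|^2_{L^2(K')}]\lesssim m^{-2}$, which is the heart of the proposition: you only outline a plan, and its central step does not work as stated. You propose to decouple by comparing $\xi_m$ with the i.i.d. surrogate $\tau_m$, ``using that the influence of each individual $w_i$ on $\xi_m$ is of order $m^{-1}$ in any reasonable norm''. But Proposition \ref{pro:tau_m} only makes $\xi_m - \tau_m$ small in $L^2(K')$ (of order $m^{-1/2}$ in $L^2$ of the probability space), and smallness fails precisely in the norms that the empirical sum $\frac1m\sum_i G\bigl((\cdot)_i\delta^m_i\bigr)$ sees: $\tau_m$ contains the term $-m^{-1/2}A\bigl(u(w_i)\delta_{w_i}\bigr)$, which is singular at $w_i$, so the ball average $(\tau_m)_i$ is of order $m^{1/2}u(w_i)$, whereas the corresponding self-term of $\xi_m$ involves $G^m\delta^m_i$, which vanishes on $B_i$, so $(\xi_m)_i$ stays of order one. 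The diagonal contributions to the variance, which are the dangerous ones, are exactly where this discrepancy appears, so the surrogate cannot simply be substituted inside the empirical sum. Your preliminary steps are also not innocent: a stochastically bounded $L^\infty$ bound on $\xi_m$ again presupposes pointwise control of $u$ through $\Theta_m$, and the per-ball replacement cost is not $O(R_m)$ uniformly, since near a neighbouring ball at distance $m^\nu R_m$ the gradient of $\Theta_m$ is of size $m^{1-2\nu}$. The paper's proof of this estimate (the $J_2$ part of Lemma \ref{lem:variance.in.balls}, Step 3 of Section \ref{sec:proof.variance.in.balls}) proceeds differently: it keeps $\xi_m$ inside the empirical sum and substitutes its explicit formula $\xi_m = AG^{-1}\Theta_m$ (replacing $\xi_m$ by $\rho_m$ only in the macroscopic term $G(Vm^{-1/2}\xi_m)$), then expands into five-index sums and extracts the cancellations through $(V-\V)u$ via Lemmas \ref{lem:cancellations.not.cross}, \ref{lem:cancellations.cross} and \ref{lem:bounds.not.cross} together with a lengthy case analysis. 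Nothing in your proposal substitutes for that work.
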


Combining Proposition \ref{pro:tilde.u_m} and \ref{pro:xi_m}, we observe that we only have to prove the statements of Theorem \ref{th:main} with $u_m - u$ replaced by $m^{-1/2}\xi_m$.  We postpone the proofs of Theorem \ref{pro:tilde.u_m} and Proposition \ref{pro:xi_m} to Section \ref{sec:proof.analytic}.

The next proposition shows that, instead of $\xi_m$,  we can actually consider $\tau_m$ introduced in the previous section.

\begin{prop} \label{pro:tau_m}
	For any bounded set \(K'\subset\R^3\) and every \(0\leq s < \frac{1}{2}\) there is a constant \(C_s(K')>0\) independent of \(m\) such that
	\begin{align}\label{eq:xi.L^2}
	\E_m[\|\xi_m\|^2_{{H}^s(K')}] \leq C_s(K').
	\end{align}
	Let $\tau_m$ be defined by \eqref{def:tau_m}. Then,
	\begin{align}\label{eq:xi-tau}
		\limsup_{m \to \infty} m^{1-2s}\E_m \left[\|\xi_m - \tau_m\|_{{H}^s(K')}^2\right] \leq C_s(K').
	\end{align}
%	\begin{enumerate}[(i)]
%		\item	\label{it:xi-tau} 
%		
%		\item \label{it:tau.Gaussian} Let $g_1, g_2 \in L^2_\loc(\R^3)$ 
%	\begin{align}
%		\tau_m[g_1] := (g_1, \tau_m) \to \xi[g_1]
%	\end{align}
%	in distribution, where $\xi[g_1]$ is a Gaussian field with mean zero and covariance
%	\begin{align}
%		\E [\xi[g_1] \xi[g_2]] = (A f A g_1, A f A g_2)_{L^2_V(\R^3)} - (A f, A g_1)_{L^2_V(\R^3)} (A f, A g_2)_{L^2_V(\R^3)},
%	\end{align}
%	where $(\cdot,\cdot)_{L^2_V(\R^3)}$ denotes the $L^2$ scalar product with weight $V$.
%	\end{enumerate}
\end{prop}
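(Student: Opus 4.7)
The plan is to exploit that both $\xi_m$ and $\tau_m$ are normalized sums of i.i.d.\ random variables. Using $AG^{-1}G = A$, one writes $\xi_m = m^{-1/2}\sum_i W_i$ and $\tau_m = m^{-1/2}\sum_i Z_i$ with
\[
	W_i := AG^{-1}\bigl(GVu - G^m((u)_i \delta^m_i)\bigr), \qquad Z_i := AVu - u(w_i)\, A\delta_{w_i}.
\]
The identity $AVu = \int V(w) u(w) A\delta_w \dd w$ yields $\E[Z_1] = 0$. By the standard variance--mean split for i.i.d.\ $L^2(K')$-valued random variables,
\[
	\E\Bigl\|m^{-1/2}\sum_{i=1}^m Y_i\Bigr\|_{L^2(K')}^2 \leq m\,\|\E Y_1\|_{L^2(K')}^2 + \E\|Y_1\|_{L^2(K')}^2,
\]
so the two estimates of the proposition reduce to bounding the first and second moments of $W_1$ and $W_1 - Z_1$ in $L^2(K')$.

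The key step is the computation (for the Poisson case; the Stokes case is analogous modulo controlling the Bogovski correction $\psi_m$ via \eqref{eq:Bogovski.estimate})
\[
	W_1 - Z_1 = u(w_1)\, A(\delta_{w_1} - \rho_m * \delta^m_1) + \bigl(u(w_1) - (u)_1\bigr)\, A(\rho_m * \delta^m_1),
\]
where $\rho_m := -\Delta g^m$ is a smooth signed function supported in $B_{3R_m}(0) \setminus B_{2R_m}(0)$ with $\int \rho_m \dd x = 1$ and $\|\rho_m\|_{L^\infty} \leq C R_m^{-3}$. By Newton's theorem, $G(\delta_{w_1} - \rho_m * \delta^m_1) = g(\cdot - w_1) - g^m * \delta^m_1$ vanishes outside $B_{4R_m}(w_1)$, and a direct computation gives $L^2$-norm $O(R_m^{1/2})$; the identity $A = G - AVG$ then yields $\|A(\delta_{w_1} - \rho_m*\delta^m_1)\|_{L^2(K')} = O(R_m^{1/2})$. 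Combined with the symmetric-ball identity $(u)_1 - u(w_1) = \tfrac{R_m^2}{10}\Delta u(w_1) + O(R_m^3)$, this gives $\|W_1 - Z_1\|_{L^2(K')} = O(m^{-1/2})$ and hence $\E\|W_1 - Z_1\|_{L^2(K')}^2 = O(m^{-1})$.

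For the mean, one writes
\[
\E[W_1 - Z_1](x) = \int V(w) u(w) A(\delta_w - \rho_m*\delta^m_w)(x)\dd w + \int V(w)\bigl(u(w) - (u)_w\bigr) A(\rho_m*\delta^m_w)(x)\dd w.
\]
Integrating the explicit kernel $|G(\delta_w - \rho_m*\delta^m_w)(x)| \leq (4\pi|x-w|)^{-1} + (4\pi R_m)^{-1}$ in $w$ over $\{|x-w|<4R_m\}$ (a set of volume $O(R_m^3)$) gives an $L^\infty_x$-bound of order $R_m^2$, and $A = G - AVG$ transfers this to $A$; the second integral is directly $O(R_m^2)$ by the symmetric-ball identity. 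Hence $\|\E(W_1-Z_1)\|_{L^2(K')} = O(m^{-2})$ and $m \|\E(W_1-Z_1)\|^2 = O(m^{-3})$, completing \eqref{eq:xi-tau}. Estimate \eqref{eq:xi.L^2} follows analogously from $\E W_1 = \E(W_1 - Z_1) = O(m^{-2})$ together with $\E\|W_1\|^2 \leq 2\E\|Z_1\|^2 + 2\E\|W_1 - Z_1\|^2 \leq C$, using the uniform bound $\|A\delta_w\|_{L^2(K')} \leq C$ for $w \in \supp V$, which is provided by the local square-integrability of $|x-w|^{-1}$ in dimension three. The main obstacle is the local analysis on $B_{4R_m}(w_1)$: Newton's theorem supplies the crucial support property, and the symmetric second-order cancellation in the ball average of $u$ provides exactly the extra power of $R_m$ needed to push the expectation one full order below the fluctuation; the Stokes case adds technical bookkeeping for $\psi_m$ but follows the same template.
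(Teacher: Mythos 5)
Your overall strategy is sound and close in spirit to the paper's: both proofs exploit the i.i.d.\ structure and reduce to a per-particle mean estimate (your $m\|\E Y_1\|^2$ term, the paper's off-diagonal terms) plus a per-particle second-moment estimate (your $\E\|Y_1\|^2$, the paper's diagonal terms); your kernel-level comparison of $G^m\delta^m_1$ with $G\delta_{w_1}$ via the mollifier identity and Newton's theorem replaces the paper's intermediate field from Lemma \ref{lem:xi.rho} together with the operator bounds of Lemma \ref{lem:G-1Gm} and the $(V-\V)$ cancellation \eqref{eq:V-V.H^-1}. However, there is a genuine gap: you invoke the symmetric-ball identity $(u)_1-u(w_1)=\tfrac{R_m^2}{10}\Delta u(w_1)+O(R_m^3)$, which requires pointwise $C^2$ (indeed $C^3$) regularity of $u$. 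Here $f\in\dot H^{-1}(\R^3)$ only, so $u=Af\in\dot H^1(\R^3)$ and $\Delta u=Vu-f$ is merely a distribution in $\dot H^{-1}$; the quantity $\Delta u(w_1)$ has no pointwise meaning, and no $O(R_m^2)$ pointwise bound on $u(w_1)-(u)_1$ is available. This is exactly why the paper never uses pointwise Taylor expansions of $u$ but instead the averaged Poincar\'e-type estimate \eqref{eq:poincare.average.ball}, $\|u-(u)_\cdot\|_{L^2(\R^3)}\leq m^{-1}\|u\|_{\dot H^1(\R^3)}$. Your argument can be repaired along those lines: for the second moment, $\E\,|u(w_1)-(u)_1|^2=\int V\,|u-(u)_\cdot|^2\lesssim m^{-2}\|u\|^2_{\dot H^1}$, which combined with $\sup_w\|A(\rho_m*\delta^m_w)\|_{L^2(K')}\lesssim 1$ gives $O(m^{-2})$ for that contribution; for the mean, Cauchy--Schwarz in $w$ yields only $O(m^{-1})$ rather than your claimed $O(m^{-2})$, but $O(m^{-1})$ is exactly enough for \eqref{eq:xi-tau}, since $m\cdot m\cdot m^{-2}=O(1)$.

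Two secondary points. First, in the mean estimate for the first integral you state an $L^\infty_x$-bound "of order $R_m^2$" after integrating the kernel in $w$; this tacitly uses boundedness of $u$, which again is not available. The fix is harmless: the term is a convolution of $Vu\in L^2$ with a kernel supported in $B_{4R_m}$ whose $L^1$-norm is $O(R_m^2)$, so Young's inequality gives the $L^2(K')$-bound you need, and $A=G-AVG$ transfers it as you indicate. Second, the exact vanishing of $G(\delta_{w}-\rho_m*\delta^m_w)$ outside $B_{4R_m}(w)$ is specific to the Poisson case: for Stokes, by \eqref{eq:GdeltaStokes} the single-sphere potential carries an extra term $\tfrac{R_m^2}{6}\Delta g$ outside the ball, so the difference has a (small but nonzero) tail of size $R_m^2|x-w|^{-3}$; this still yields the $O(R_m^{1/2})$ local $L^2$-bound and an $O(m^{-2}\log m)$ mean contribution, but it is a further item beyond the Bogovski correction $\psi_m$ that your "analogous" remark glosses over. (Also note a purely notational clash: your mollifier $\rho_m=-\Delta g^m$ collides with the paper's intermediate field $\rho_m$ of Lemma \ref{lem:xi.rho}.)
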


 We postpone the proof of Proposition \ref{pro:tau_m} to Section \ref{sec:Proof.tau_m}.

Note that for \(s=0\), these estimates include the case \(L^2(K')\) which we will use now in order to prove Theorem \ref{th:main}.
Indeed, Theorem \ref{th:main} is a direct consequence of the above results together with the classical Central Limit Theorem.
\begin{proof}[Proof of Theorem \ref{th:main}] 
	Due to the uniform bound on \(\E_m[\|\xi_m\|^2_{L^2(K)}]\) from Proposition \ref{pro:tau_m}, assertion \ref{it:L^2.Convergence} of the main theorem follows immediately from  Theorem \ref{pro:tilde.u_m} and Proposition \ref{pro:xi_m} since \(\dot{H}^1(\R^3)\) embeds into \(L^2_{loc}(\R^3)\).
	
	Since convergence in probability implies convergence in distribution, Theorem \ref{pro:tilde.u_m} and Propositions \ref{pro:xi_m}
	and \ref{pro:tau_m} imply that it suffices to prove assertion \ref{it:fluctuations} of Theorem \ref{th:main} with $\xi_m[g]$ replaced by
	$\tau_m[g]:= (g, \tau_m)_{L^2(\R^3)}$, i.e we need to prove that
		\[\tau_m[g] \to \xi[g]\]
	 in distribution for any \(g\in L^2(\R^3)\) with compact support. Since \(\tau_m[g]\) is a sum of independent random variables, this is a direct consequence of the Central Limit Theorem and the following computation for covariances: let \(g_1,g_2\in L^2(\R^3)\) with compact support, then
	 
	\begin{align}
		&\E_m\left[\tau_m[g_1] \tau_m[g_2]\right] \\
		&= m^{-1} \E_m \left[ \left(g_1, \sum_{i=1}^{m} A \left(\rho u - j  - (u(X_i) - V_i) \delta_{X_i} \right) \right)_{L^2(\R^3)}  \right.\\
		& \qquad \qquad \qquad	\left. \left(g_2, \sum_{j=1}^{m} A \left(\rho u - j  - (u(X_j) - V_j) \delta_{X_j} \right) \right)_{L^2(\R^3)} \right] \\
		&= \int_{\R^3\times\R^3} \left(g_1,A (\rho u - j - (u(x) - v) \delta_x)\right)_{L^2(\R^3)} \left(g_2,A (\rho u - j - (u(x) - v) \delta_x)\right)_{L^2(\R^3)} f(\de x,\de v)\\
		&= \int_{\R^3\times\R^3} \left(g_1,A ((u(x) - v) \delta_x)\right)_{L^2(\R^3)} \left(g_2,A ((u(x) - v) \delta_x)\right)_{L^2(\R^3)} f(\de x,\de v) \\
		& \qquad - (A g_1, \rho u - j)_{L^2(\R^3)} (A g_2, \rho u - j)_{L^2(\R^3)} \\
		&=  \int_{\R^3\times\R^3} ((u(x)-v) \cdot (A g_1)(x)) ((u(x)-v) \cdot (A g_2)(x)) f(\de x,\de v)\\
		& \qquad -  \left(\rho u  -j,A g_1\right)_{L^2(\R^3)}\left(\rho u -j,Ag_2\right)_{L^2(\R^3)}.
	\end{align}
	Here we used that \(A\delta_x\in L^2_{\loc}(\R^3)\) (see Lemma \ref{lem:delta-delta_m})  and that \(A\) is a symmetric operator on \(L^2(\R^3)\). 
	This finishes the proof.
\end{proof}

\section{Proof of Theorem \ref{pro:tilde.u_m} and Proposition \ref{pro:xi_m}} \label{sec:proof.analytic}

In this section, we will reduce the proof of Theorem \ref{pro:tilde.u_m} and Proposition \ref{pro:xi_m} to 
proving the following single probabilistic lemma. The proof of this lemma, which is given in Section \ref{sec:proof.variance.in.balls},
is the main technical part of this paper. It makes rigorous the heuristic equation \eqref{eq:fluc.xi_m}.

As we discussed in the heuristic arguments, we will exploit in the following that the probability of having very close particles is vanishing as stated in Lemma \ref{lem:prob.distances}.
In the notation of this lemma, we abbreviate
\begin{align} \label{def:W_m}
	 \Ocal_m = \Ocal_{m,0,5}.
\end{align}

\begin{lem} \label{lem:variance.in.balls}
	Let $\Lambda_m$,$\Gamma_m$, $\Xi_m$ and $\tilde \Xi_m$ be defined by
	\begin{align}
		\Lambda_m & := (G^m - G) \left( \frac 1 m \sum_i ((u)_i - V_i) \delta^m_i \right) \label{def:Lambda_m}, \\
		\Gamma_m &:= G^m\left[\sum_{i} \frac{(u)_i - V_i}{m} \delta^m_i\right] + G(\rho m^{-\frac{1}{2}} \xi_m)  \label{def:Gamma_m},\\
 		\Xi_m &:= G (\rho m^{-\frac{1}{2}} \xi_m) - G^m\left[\sum_{i} \frac{m^{-\frac{1}{2}} (\xi_m)_i}{m} \delta^m_i \right] \label{def:Xi_m}, \\
 		 	\widetilde \Xi_m &:= G (\rho m^{-\frac{1}{2}} \xi_m) - G\left[\sum_{i} \frac{m^{-\frac{1}{2}} (\xi_m)_i}{m} \delta^m_i \right] \label{def:Xi_m.tilde}.
	\end{align}
	Then,
	\begin{align}
		\limsup_{m\to \infty}  m^2 \E_m\left[\1_{\Ocal_m} \|\nabla \left(Gh + \Gamma_m  + \Xi_m\right)\|^2_{L^2 (\cup_i B_i)} \right] < \infty, \\
		\limsup_{m\to \infty}  m^4 \E_m\left[\1_{\Ocal_m}  \|\Xi_m\|^2_{L^2 (\cup_i B_i)} \right] < \infty, \\
		\limsup_{m\to \infty}   m^2 \E_m\left[\1_{\Ocal_m} \| \widetilde\Xi_m + \Lambda_m \|^2_{L^2_\loc(\R^3)}\right]  <\infty.
	\end{align}
% 	\rhcomment{Here and in the following, why don't we use (at least for shortness) that $u + G(\rho u - j) = G h$?}
% 	\textcolor{magenta}{New result:} For all \(K'\subset \R^3\) bounded and all \(0\leq s < \frac{1}{2}\), it is
% 	\begin{align*}
% 		\limsup_{m\to \infty}   m^{2-2s} \E_m\left[\1_{\Ocal_m} \| \widetilde\Xi_m + \Lambda_m \|^2_{H^s(K')}\right]  <\infty.
% 	\end{align*}
\end{lem}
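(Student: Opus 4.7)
The strategy is to leverage an explicit decomposition of $\xi_m$ into (essentially) i.i.d.\ summands, combined with in-ball cancellations valid on $W_m$, to reduce all three estimates to variance computations. Substituting $\Theta_m = m^{1/2}GVu - m^{-1/2}\sum_k G^m[(u)_k\delta^m_k]$ into $\xi_m = AG^{-1}\Theta_m$ and using $AG^{-1}G = A$ yields the key representation
\[
\xi_m = m^{1/2}A(Vu) - m^{-1/2}\sum_{k=1}^m Y_k, \qquad Y_k := AG^{-1}G^m\bigl[(u)_k\delta^m_k\bigr],
\]
with each $Y_k$ depending only on $w_k$, so $\{Y_k\}$ is a genuine i.i.d.\ family. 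Combined with the identity $\Gamma_m = GVu - m^{-1/2}\xi_m$ (a direct consequence of $(1+GV)\xi_m = \Theta_m$) and the cutoff properties $G^m\delta^m_i = 0$ on $B_i$ and $G^m\delta^m_j = G\delta^m_j$ on $B_i$ for $j \neq i$, valid on $W_m$, one obtains the master decomposition
\[
\tilde u_m = u + m^{-1/2}\xi_m + \Xi_m + \Lambda_m + \Lambda^\xi_m,
\]
where $\Lambda^\xi_m := (G^m - G)\bigl[\tfrac{1}{m}\sum_i m^{-1/2}(\xi_m)_i\delta^m_i\bigr]$ is the natural $\xi$-analogue of $\Lambda_m$, with the explicit in-ball values $\Lambda_m|_{B_i} = -(u)_i$ and $\Lambda^\xi_m|_{B_i} = -(m^{-1/2}\xi_m)_i$.

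For the first estimate, the above identities give in $B_i$ on $W_m$ that $\nabla(u + GVu + \Gamma_m + \Xi_m) = \nabla u + 2\nabla GVu - m^{-1/2}\nabla\xi_m + \nabla\Xi_m$. The terms $\nabla u$ and $\nabla GVu$ belong to $L^2(\R^3)$, and a Fubini computation over the random balls of total volume $\sim m^{-2}$ gives $\E\|\cdot\|^2_{L^2(\cup_i B_i)} \ls m R_m^3 \sim m^{-2}$, matching the $m^{-2}$ prefactor. The remaining $-m^{-1/2}\nabla\xi_m + \nabla\Xi_m$ is smooth in $B_i$ (all $\delta^m_i$-contributions cancel, as in the derivation of the master identity) and is controlled by a gradient version of the second-estimate analysis below.

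For the second estimate, the cancellations on $W_m$ yield
\[
\Xi_m\big|_{B_i} = m^{-1/2}\Bigl\{G(V\xi_m) - G\Bigl[\sum_{j\neq i}\tfrac{(\xi_m)_j}{m}\delta^m_j\Bigr]\Bigr\},
\]
a leave-one-out expression. Substituting the i.i.d.\ representation of $\xi_m$ into both occurrences converts this into a sum of differences between continuous integrals against $V$ and empirical sums over $\{w_j : j\neq i\}$, each weighted by products of the $Y_k$'s. Conditional on $w_i$, the remaining positions are i.i.d.\ with density $V$, and standard variance bounds give $\E|\Xi_m|^2 \ls m^{-2}$ pointwise; combined with $|\cup_i B_i| \sim m^{-2}$, this yields the claimed $m^{-4}$ rate.

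For the third estimate, split $\tilde\Xi_m + \Lambda_m$ into its two pieces. The $\Lambda_m$ piece equals $-\tfrac{1}{m}\sum_i(u)_i (\eta_m g)*\delta^m_i$ (plus a Bogovski correction for Stokes); disjointness of the supports on $W_m$ together with $\|(\eta_m g)*\delta^m_i\|^2_{L^2} \ls R_m$ give $\E\|\Lambda_m\|^2_{L^2_\loc} \ls R_m/m \sim m^{-2}$. For $\tilde\Xi_m = m^{-1/2} G[V\xi_m - \tfrac{1}{m}\sum_j (\xi_m)_j\delta^m_j]$, substituting the representation of $\xi_m$ in both $V\xi_m$ and the $(\xi_m)_j$ converts it into an expression that is at most quadratic in the i.i.d.\ positions $\{w_j\}$; testing against a fixed $g \in L^2(\R^3)$ of compact support and computing the variance directly yields the required $m^{-2}$ bound. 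The main obstacle throughout is the correlation between $\xi_m$ and individual positions $w_j$, and it is resolved cleanly by the representation above, which concentrates this correlation in the single term $Y_j$ and reduces all estimates to standard moment computations for sums of i.i.d.\ random variables.
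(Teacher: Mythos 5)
Your structural reductions are correct, but they essentially reproduce the paper's own route rather than a new one: the representation $\xi_m=m^{1/2}A(Vu)-m^{-1/2}\sum_k Y_k$ is the analogue of the intermediate field $\rho_m$ used in the paper, the identity $\Gamma_m=GVu-m^{-1/2}\xi_m$ indeed follows from $(1+GV)\xi_m=\Theta_m$, and your master decomposition with $\Lambda_m^\xi$ and the in-ball values $\Lambda_m|_{B_i}=-(u)_i$ is exactly the algebra behind the proofs of Propositions \ref{pro:tilde.u_m} and \ref{pro:xi_m}. The issue is that, after these reductions, the whole analytic content of the lemma is compressed into the phrases ``standard variance bounds give $\E|\Xi_m|^2\lesssim m^{-2}$ pointwise'' and ``controlled by a gradient version of the second-estimate analysis'', and this is precisely where the real work lies; as a proof, this is a genuine gap, not a routine fill-in.

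Concretely, two things are missing. First, the leave-one-out empirical sums are \emph{not} centered at the corresponding continuum terms: their means differ from the $G(V\,\cdot)$-terms by $O(m^{-1})$, both through the factor $(m-1)/m$ and, more importantly, through the replacement of multiplication by $V$ by the sphere/ball-averaged operator $\V$ of \eqref{def:curly.V}. These biases enter the second moment squared, i.e.\ at exactly the critical order $m^{-2}$ (and, in the five-distinct-index terms, they must be shown to be $O(m^{-1})$ for \emph{each} factor to reach the needed $m^{-5}$ per term). Establishing this is the content of the cancellation estimates $\|(V-\V)v\|_{L^2}\lesssim m^{-1}\|v\|_{\dot H^1(\R^3)}$ and their consequences (Lemmas \ref{lem:auxiliary.averages}, \ref{lem:cancellations.not.cross}, \ref{lem:cancellations.cross}), which use $V\in W^{1,\infty}$ and a Poincar\'e inequality at scale $1/m$; a variance bound alone, which controls fluctuation about the mean, cannot produce it. Second, expanding your quadratic (U-statistic) expression for $\Xi_m$ in $B_i$ produces expectations indexed by up to five holes, and the coincidence/near-field patterns — the recurring index $i$ (note that conditioning on $w_i$ does not decouple $\1_{B_i}$ from $\Xi_m$, since $\xi_m$ depends on $w_i$), or two sampled holes at distance $\sim R_m$ — give near-singular kernels of the type $(|y-z|^2+m^{-2})^{-1}$, $(|y-z|^4+m^{-4})^{-1}$ whose contributions are borderline (some are only $O(m^{-\alpha}\log m)$) and cannot be dismissed as lower order. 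The same concentration effect undermines your first estimate: $\nabla\xi_m$ carries most of its $L^2$ energy near the holes, so ``global energy times $|\cup_i B_i|\sim m^{-2}$'' does not bound $m^{-1}\E_m[\1_{W_m}\|\nabla\xi_m\|^2_{L^2(\cup_i B_i)}]$ without the near-field case analysis (the paper's Step 2 and the appendix estimates for $\mathfrak{J}_3$). Until these cancellation lemmas and the multi-index case analysis are actually carried out, the three asserted bounds are not proved.
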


The proof of this lemma is the main technical work of the present paper. We postpone it to Section \ref{sec:proof.variance.in.balls}.

\begin{proof}[Proof of Proposition \ref{pro:xi_m}]
Recall the definition of $\tilde u_m$ from \eqref{def:tilde.u_m}.
	We compute using $u = G (h - \rho u + j)$ and $\xi_m = A G^{-1} \Theta_m = \Theta_m - G \rho \xi_m$  (cf. \eqref{eq:relation.A.G})  and 
	the definition of $\Theta_m$ from \eqref{def:Theta}
	\begin{align}
		\tilde u_m - u - m^{-1/2} \xi_m &= G\left(h - \frac 1 m \sum_i (u - V_i + m^{-1/2} \xi_m )_i \delta^m_i\right) - u - m^{-1/2} \xi_m\\
		&= G\left(\rho u - j - \frac 1 m \sum_i (u - V_i + m^{-1/2} \xi_m )_i \delta^m_i \right) - m^{-1/2} \Theta_m + m^{-1/2} G \rho \xi_m  \\
		&= m^{-1/2} G \left( \rho \xi_m - \frac 1 m \sum_i (\xi_m)_i \delta^m_i\right) + (G^m - G) \left( \frac 1 m \sum_i ((u)_i  -V_i )\delta^m_i \right)\\
		&= \widetilde \Xi_m + \Lambda_m.
	\end{align}
	Hence,
	\begin{align}
		&\P_m\left[m^{\beta}\|\tilde u_m - u - m^{-1/2} \xi_m\|_{L^2(K')}> \eps \right]\leq \P_m[\Ocal_m^c] + C\eps^{-2}m^{2\beta}\E_m\left[\1_{\Ocal_m} \| \widetilde\Xi_m + \Lambda_m \|^2_{L^2(K')}\right]
	\end{align}
	and we now conclude by Lemmas \ref{lem:prob.distances} and  \ref{lem:variance.in.balls}.
% 	\textcolor{magenta}{New conclusion:} Hence,
% 	\begin{align}
% 	&\P_m\left[m^{\beta-s}\|\tilde u_m - u - m^{-1/2} \xi_m\|_{H^s(K')}> \eps \right]\\
% 	&\leq \P_m[\Ocal_m^c] + C\eps^{-2}m^{2\beta-2s}\E_m\left[\1_{\Ocal_m} \| \widetilde\Xi_m + \Lambda_m \|^2_{H^s(K')}\right]
% 	\end{align}
% 	and we now conclude by Lemmas \ref{lem:prob.distances} and  \ref{lem:variance.in.balls}.
\end{proof}

\begin{proof}[Proof of Theorem \ref{pro:tilde.u_m}]
We observe that the assertion follows from the following claim:
	There exists a universal constant $C$  such that for all $(X_1,\ldots, X_m) \in \Ocal_m$ and all $m$ sufficiently large
	\begin{align} \label{eq:est.tilde.u_m}
		\| \tilde u_m - u_m\|^2_{\dot H^1(\R^3)} &\leq C \|\nabla (u + G(\rho u - j)) + \nabla \Gamma_m\|^2_{L^2 (\cup_i B_i)} + \nabla \Xi_m\|^2_{L^2(\cup_i B_i)} + Cm^2 \|\Xi_m\|^2_{L^2 (\cup_i B_i)}. \quad ~
	\end{align}

Indeed, accepting the claim for the moment, let $\beta < 1$ and $\eps>0$. Then, using again $u = G (h - \rho u + j)$
	\begin{align*}
		&\P_m \left[m^{\beta} \|\tilde u_m - u_m\|_{\dot H^1(\R^3)} > \eps \right] \\ 
		&\leq \P_m[\Ocal_m^c] + C \eps^{-2} m^{2\beta} \E_m\left[\1_{\Ocal_m} \left(\|\nabla \left(Gh + \Gamma_m  + \Xi_m\right)\|^2_{L^2 (\cup_i B_i)} + m^2 \|\Xi_m\|^2_{L^2 (\cup_i B_i)}\right) \right].
	\end{align*}
Thus, the assertion follows again from Lemmas \ref{lem:prob.distances} and  \ref{lem:variance.in.balls}.
	
	\medskip
	It remains to prove the claim above. It follows from the fact that $u_m - \tilde u_m$ solves the homogeneous Stokes equations outside of the particles.
%	We only give the proof in the case of the Stokes equations. For the Poisson equation, the proof is analogous but slightly simpler.

 	Let $(X_1,\ldots X_m) \in \Ocal_m$. Then, by definition of this set, the balls $B_{2R_m}(X_i)$ are disjoint for $m$ sufficiently large and
 	we may assume in the following that this is satisfied. 
 	
 	By definition of $u_m$ and $\tilde u_m$, we have $-\Delta (\tilde u_m - u_m) + \nabla p = 0$ in $\R^3 \setminus \cup_i \overline{B_i}$. By classical arguments which we include for convenience, this implies
	\begin{align} \label{eq:claim.var.est}
		\|\tilde u_m - u_m\|^2_{\dot H^1(\R^3)} \leq C \left(\|\nabla \tilde u_m\|^2_{L^2 (\cup_i B_i)} + \frac 1 m \sum_i (\tilde u_m -V_i)_i^2\right).
	\end{align}	
	
	Indeed, $\tilde u_m - u_m$ minimizes the $\dot H^1(\R^3)$-norm
 	among all divergence free functions $w$ with $w = \tilde u_m - u_m = \tilde u_m -V_i$ in $\cup_i B_i$.
 	Thus, to show \eqref{eq:claim.var.est}, it suffices to construct a divergence free function $w$ with $w = \tilde u_m -V_i$ in $\cup_i B_i$ such that
 	$\|w\|_{\dot H^1(\R^3)}$ is bounded by the right-hand side of \eqref{eq:claim.var.est}. Since the balls $B_{2R_m}(X_i)$ are disjoint as 
 	$(X_1,\dots,X_m) \in \Ocal_m$, we only need to construct divergence free functions $w_i$ such that $w_i \in H^1_0(B_{2R_m}(X_i))$, $w_i = \tilde u_m - V_i$ in $B_i$ and 
 	\begin{align}
		\|w_i\|^2_{\dot H^1(\R^3)} \leq C \left(\|\nabla \tilde u_m\|^2_{L^2  (B_i)} + \frac 1 m (\tilde u_m - V_i)_i^2\right).
	\end{align}	 
 	It is not difficult to see that such functions $w_i$ exist. For the convenience of the reader, we state this result in Lemma \ref{lem:ext.est} below.
 	Thus, the estimate \eqref{eq:claim.var.est} holds.
 	
 	It remains to prove that the right-hand side of \eqref{eq:claim.var.est} is bounded by the right-hand side of \eqref{eq:est.tilde.u_m}.
 	To this end, let $x \in B_i$ for some $1 \leq i \leq m$. We resort to the definition of $\tilde u_m$ in \eqref{def:tilde.u_m} to deduce, analogously as in \eqref{eq:tilde.u_m(w_i)}, that
 	\begin{align}
 		\tilde u_m(x) &= u(x) - (u)_i + V_i -m^{-\frac{1}{2}} (\xi_m)_i + G(\rho u - j)(x) \\
 		&- G\left[ \sum_{j\neq i} \frac{(u)_j -V_j}{m} \delta^m_j\right](x) - G\left[\sum_{j\neq i} \frac{m^{-\frac{1}{2}} (\xi_m)_j}{m} \delta^m_j \right](x).
 	\end{align}
 	The definitions of $\xi_m$ and $\Theta_m$ from \eqref{def:xi} and \eqref{def:Theta}, the identity $\xi_m = \Theta_m - G \rho \xi_m$ implies that for all $y \in B_i$
 	\begin{align}
 		m^{-\frac{1}{2}} \xi_m(y) = &G(\rho u - j)(y) - G\left[ \sum_{j\neq i} \frac{(u)_j - V_j}{m} \delta^m_j\right](y) - G(\rho m^{-\frac{1}{2}} \xi_m)(y),
 	\end{align}
 	where we used that $(X_1,\dots,X_m) \in \Ocal_m$ to replace $G^m$ by $G$.
 	Thus,
 	\begin{align}
 	 		\tilde u_m(x) - V_i &= u(x) - (u)_i + G(\rho u - j)(x) - (G(\rho u - j))_i + G\left[ \sum_{j\neq i} \frac{(u)_j - V_j}{m} \delta^m_j\right]_i  \\
 	 		 & \quad - G\left[ \sum_{j\neq i} \frac{(u)_j - V_j}{m} \delta^m_j\right](x) +(G(\rho m^{-\frac{1}{2}} \xi_m))_i - G\left[\sum_{j\neq i} \frac{m^{-\frac{1}{2}} (\xi_m)_j}{m} \delta^m_j \right](x)\\
 		&=(u + G(\rho u - j))(x) - (u + G(\rho u - j))_i + \Gamma_m(x) - (\Gamma_m)_i  + \Xi_m(x).
 	\end{align}
 	To conclude the proof, we  again use $(X_1,\dots,X_m) \in \Ocal_m$ to replace $G$ by $G^m$ appropriately.
 	Finally, we combine this identity with
 	\eqref{eq:claim.var.est} and the estimate $(\Xi_m)^2_i \leq C m^3 \|\Xi_m\|^2_{L^2(B_i)}$.
\end{proof}

\begin{lem} \label{lem:ext.est}
	Let $x \in \R^3$, $R > 0$ and $w \in H^1(B_R(x))$ be divergence free. Then, there exists a divergence free function $\varphi \in H^1_0(B_{2 R}(x))$ with $\varphi = w$ in $B_R(x)$ and 
 	\begin{align}
		\|\varphi\|^2_{\dot H^1(\R^3)} \leq C \left(\|\nabla w\|^2_{L^2  (B_R(x))} + R (w)^2_{x,R} \right),
	\end{align}	 
	where $(w)_{x,R} = \fint_{B_R(x)} w \dd x$ and $C$ is a universal constant.
\end{lem}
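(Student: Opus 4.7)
My plan is to split $v = c + \bar v$ where $c := (v)_{x,R}$ is a constant vector and $\bar v := v - c$ is mean-zero and divergence-free on $B_R(x)$, and to construct $\varphi = \varphi_1 + \varphi_2$ with $\varphi_j \in H^1_0(B_{2R}(x))$ handling each summand. Throughout I fix a cutoff $\zeta \in C^\infty_c(B_{2R}(x))$ with $\zeta \equiv 1$ on $B_R(x)$ and $\|\nabla \zeta\|_{L^\infty} \lesssim 1/R$, and write $A_R := B_{2R}(x) \setminus \overline{B_R(x)}$ for the annular corrector region.

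For the constant part I set $\varphi_1 := \zeta c + \psi_1$, where $\psi_1 \in H^1_0(A_R)$ is obtained from Bogovski's operator on the annulus so as to cancel the divergence $\nabla \zeta \cdot c$ of $\zeta c$; the solvability condition $\int_{A_R} \nabla \zeta \cdot c \dd y = 0$ holds by the divergence theorem. Combining the Bogovski estimate with the pointwise bound $|\nabla \zeta \cdot c| \lesssim |c|/R$ on $A_R$ yields $\|\nabla \varphi_1\|_{L^2(\R^3)}^2 \lesssim R|c|^2$, which matches the target contribution $R(v)_{x,R}^2$.

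For the mean-zero part I first invoke Poincar\'e on $B_R(x)$ to bound $\|\bar v\|_{L^2(B_R(x))} \lesssim R \|\nabla v\|_{L^2(B_R(x))}$, and then apply a Stein extension to obtain $\tilde v \in H^1(B_{2R}(x))$ agreeing with $\bar v$ on $B_R(x)$ with $\|\tilde v\|_{L^2(B_{2R}(x))} + R\|\nabla \tilde v\|_{L^2(B_{2R}(x))} \lesssim R \|\nabla v\|_{L^2(B_R(x))}$. The product $\zeta \tilde v$ lies in $H^1_0(B_{2R}(x))$, equals $\bar v$ on $B_R(x)$, and has divergence supported in $A_R$. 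That divergence integrates to zero on $A_R$ because $\int_{\partial B_R(x)} \bar v \cdot \nu \dd \H^2 = \int_{B_R(x)} \dv \bar v \dd y = 0$, so a second Bogovski correction $\psi_2 \in H^1_0(A_R)$ renders $\varphi_2 := \zeta \tilde v + \psi_2$ divergence-free, and the product rule combined with the above bounds yields $\|\nabla \varphi_2\|_{L^2(\R^3)}^2 \lesssim \|\nabla v\|_{L^2(B_R(x))}^2$. Setting $\varphi := \varphi_1 + \varphi_2$ then gives the required extension.

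The only point that requires attention is the $R$-independence of the operator norms of the Bogovski operator on $A_R$ and of the Stein extension from $B_R(x)$ to $B_{2R}(x)$. Both follow by a routine scaling argument from the corresponding estimates on the reference configuration $B_2(0) \setminus \overline{B_1(0)}$, in the same spirit as the bound \eqref{eq:Bogovski.estimate} already used in the construction of $g^m$ in Section \ref{sec:MoreNotation}; everything else reduces to the product rule, Poincar\'e's inequality and the Bogovski bound.
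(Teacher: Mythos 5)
Your proposal is correct and follows essentially the same route as the paper: split off the average, handle the constant part with a cutoff, extend the mean-zero part (with Poincar\'e giving the $R$-scaling), and restore the divergence constraint by a Bogovski correction on the annulus with $R$-independent constants by scaling. The only cosmetic difference is that you apply two separate Bogovski corrections (one per summand) where the paper applies a single one to the sum, which changes nothing of substance.
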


\begin{proof}
	We write $w = w - (w)_{x,R} + (w)_{x,R}$. By a classical extension result for Sobolev function, there exists $\varphi_1 \in H^1_0(B_{2R}(x))$
	such that $\varphi_1 = w - (w)_{x,R}$ in $B_R(x)$ and
	\begin{align}
		\|\nabla \varphi_1\|_{L^2(\R^3)} \leq C \|\nabla w\|_{L^2  (B_R(x))}.
	\end{align}
	By scaling, the constant $C$ does not depend on $R$.
	
	Furthermore, we take $\varphi_2 = (w)_{x,R} \theta_R$ where $\theta_R \in C_c^\infty(B_{2R}(x))$ is a cut-off function with $\theta_R = 1$ in $B_R(x)$
	and $\|\nabla \theta_R\|_\infty \leq C R^{-1}$.
	Then, 
	\begin{align}
		\|\nabla \varphi_2\|^2_{L^2(\R^3)} \leq C R (w)^2_{x,R}.
	\end{align}	 
	
	Finally, applying a standard Bogovski operator, there exists a function $\varphi_3 \in H^1_0(B_{2r}(x) \setminus B_R(x))$ such that 
	$\dv \varphi_3 = - \dv (\varphi_1 + \varphi_2)$ and
	\begin{align}
		\|\nabla \varphi_3\|_{L^2(\R^3)} \leq C \|\dv(\varphi_1 + \varphi_2)\|_{L^2(\R^3)}.
	\end{align}
	Again, the constant $C$ is independent of $R$ by scaling considerations.
	
	Choosing $\varphi = \varphi_1 + \varphi_2 + \varphi_3$ finishes the proof.
\end{proof}

\section{Proof of probabilistic statements} \label{sec:proof.stochastic}

This section contains the main technical part of the proof of our main result, the probabilistic estimates stated in Proposition \ref{pro:tau_m} and Lemma \ref{lem:variance.in.balls}. The strategy that we will use to estimate  all these terms is to expand the square of sums over the particles and then to use independence of the positions of the particles to calculate the expectations, distinguishing between terms where different particles appear and where one or more particles appear more than once. Then, it will remain to observe that combinatorially relevant terms cancel and that the remaining terms can be bounded sufficiently well, uniformly in $m$. This proof is quite lengthy. Indeed, expanding the square will lead to terms with up to $5$ indices, thus giving rise to a huge number of cases that need to be distinguished.

However, there are only relatively few and basicanalytic tools that we will rely on to obtain these cancellations and estimates. These are collected in the following subsection.
Their proofs are postponed to the appendix.

Some of those estimates concern expressions that will recurrently appear when we take expectations. 
Indeed, since many of the terms in Lemma \ref{lem:variance.in.balls} contain $L^2$-norms in the particles $B_i$, we will often deal with terms of the form
\begin{align}
	\E_m\left [\1_{B^m_{i}}(x) \right] =  \int_{\R^3\times \R^3} \1_{B^m_{y}}(x) f(\de y, \de v) = \int_{\R^3} \1_{B^m_{y}}(x) \rho(y) \dd y  = m^{-3} (\rho)_x.
\end{align}
Another term that recurrently appears due to the definitions of $\tilde u_m$ and $\xi_m$ is 
\begin{align} \label{def:curly.V}
	(\Rcal w)(x) := \E_m\left[ (w)_i \delta^m_i\right](x) &= \int_{\R^3} \rho(y) (w)_y \delta^m_y(x)   \dd y = \fint_{\partial B^m_x} \rho(y) (w)_y \dd y.
\end{align}
To justify this formal computation one tests the expression with a function $\varphi \in C_c^\infty(\R^3)$ and performs some changes of variables.

For the sake of a more compact notation, we introduce
\begin{align} 
	W_i &:= (u)_i - V_i, \label{def:W_i}\\
	F &:= \rho u - j,\label{def:F}  \\
	\Fcal(x) &:= \E_m\left[ W_i \delta^m_i\right](x) = \int_{\R^3\times \R^3}   ((u)_y - v) \delta^m_y(x)   f(\de y,\de v)
	 = \fint_{\partial B^m_x} \rho(y) (u)_y - j(y) \dd \mathcal{H}^2 (y). \label{def:Fcal}
\end{align}

%In Section \ref{sec:Proof.tau_m}, we give the short proof of Proposition \ref{pro:tau_m}.
%Finally in Section \ref{sec:proof.variance.in.balls}, we give the proof of Lemma \ref{lem:variance.in.balls}.
% However, all the analytic tools are provided in the following subsection.
%
%\medskip

\subsection{Some analytic estimates} \label{sec:Auxiliary}
In this subsection, we collect some auxiliary observations and estimates for future reference. All the proofs of the results in this subsection can be found in subsection \ref{subsec:appendix_proofs} of the appendix.

In the following, we denote by $K$ the bounded set defined by
\begin{align} \label{eq:def.K}
	K := \{ x \in \R^3 : \dist(x,\supp \rho) \leq 1\}.
\end{align}
Note  that $B_i \subset K$ almost surely for all $1 \leq i \leq m$ and all $m \geq 1$.

\begin{lem} \label{lem:auxiliary.averages}
%	For all $v \in L^2(K)$ and all $i$
%	\begin{align}\label{eq:E=(V)}
%	\E\left[\1_{B^m_i}(x) v(x)\right] = (V v)_x.
%	\end{align}
	\begin{enumerate}
		\item[(i)] For all $1 \leq p \leq \infty$ and all \(w \in L^p(\R^3)\)
		\begin{align} \label{eq:()}
		\|(w)_\cdot\|_{L^p(\R^3)} \leq \|w\|_{L^p(\R^3)} .
		\end{align}
		\item[(ii)] For all $\alpha >0$, all $1 \leq p \leq \infty$, and all \(w \in L^p(K)\), we have
		\begin{align}
		\|\rho^\alpha(w)_\cdot\|_{L^p(\R^3)}  &\leq C \|w\|_{L^p(K)} \label{eq:V()},
		\end{align}
		where the constant $C$ depends only on $\rho$, $p$ and $\alpha$.
		\item[(iii)] For all \(w\in \dot{H}^1(\R^3)\)
		\begin{align} \label{eq:poincare.average.ball}
		\|w-(w)\|_{L^2(\R^3)}\leq m^{-1} \|w\|_{\dot{H}^1(\R^3)}.
		\end{align}
		\item[(iv)] \label{it:R} The operator $\Rcal$ defined in \eqref{def:curly.V} is a bounded operator from  $L^2(K)$ to $L^2(\R^3) \cap \dot H^{-1}(\R^3)$ and from \(H^1(K)\) to \(H^1(\R^3)\). Moreover, there is a constant \(C\) depending only on \(\rho\) such that
		\begin{align}
		\left\| (\Rcal - \rho) w \right\|_{L^2(\R^3)} & \leq C m^{-1} \|w\|_{H^1(K)} \label{eq:V-V.L^2}, \\
		\norm{(\Rcal - \rho) w}_{\dot H^{-1}(\R^3)} & \leq C m^{-1} \|w\|_{L^2(K)}. \label{eq:V-V.H^-1}
		\end{align}
		\item[(v)] We have
		\begin{align} \label{est_W_i.F}
		\sup_m  \|F\|_{\dot H^{-1}(\R^3)} + \|\Fcal\|_{\dot H^{-1}(\R^3)} + \|F\|_{L^2(\R^3)} + \|\Fcal\|_{L^2(\R^3)} + \E_m[W_1^2] < \infty,
		\end{align}
		and there is a constant \(C\) depending only on \(\rho\) and \(j\) such that
		\begin{align}
		\left\| F-\Fcal \right\|_{L^2(\R^3)} + \norm{F-\Fcal}_{\dot H^{-1}(\R^3)} & \leq C m^{-1}\left(\|u\|_{H^1(K)}+\|j\|_{H^1(\R^3)}\right) \label{eq:F-F.L^2}.
% 		\\
% 		\norm{F-\Fcal}_{\dot H^{-1}(\R^3)} & \leq C m^{-1}\left(\|u\|_{L^2(K)} + \|j\|_{L^2(\R^3)}\right). \label{eq:F-F.H^-1}
		\end{align}
	\end{enumerate}
	
\end{lem}

\begin{lem} \label{lem:Gdelta}
	There exists a constant $C$ such that for all $x,y \in \R^3$ and all $m \geq 1$, we have
	\begin{align}
		|G \delta^m_y|(x)  &\leq C \frac{1}{|x-y| + m^{-1}}, \label{eq:G.delta.pointwise} \\
		 | A \delta^m_y|(x) &\leq C \left(1 + \frac{1}{|x-y| + m^{-1}} \right), \label{eq:A.delta.pointwise} \\
		|\nabla G \delta^m_y|(x)  &\leq C \frac{1}{|x-y|^2 + m^{-2}}. \label{eq:nablaG.delta.pointwise} 
	\end{align}
	In particular, for any bounded set $K'$ 
	\begin{align}
		\sup_{y \in \R^3}\left( \| G \delta^m_y\|_{L^2(K')} + \|  A \delta^m_y\|_{L^2(K')}  \right)&\leq C(K'). \label{eq:G.delta.L^2_loc}
	\end{align}	
	
	Moreover, for all \(m\geq 1\) and \(y\in \R^3\), it holds 
	\begin{align}
	\|\delta_y^m\|_{\dot H^{-1}(\R^3)} &\leq C m^{1/2}, \label{est:dirac.-1}
	\end{align}
	with a constant independent of $y$ and $m$.
\end{lem}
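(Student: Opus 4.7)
The proof naturally splits into three pointwise kernel bounds \eqref{eq:G.delta.pointwise}--\eqref{eq:nablaG.delta.pointwise}, the integrated $L^2_\loc$ bound \eqref{eq:G.delta.L^2_loc}, and the negative Sobolev bound \eqref{est:dirac.-1}. I would prove them in this order, each relying on the previous. For the pointwise estimates on $G\delta^m_y$ and $\nabla G\delta^m_y$, the key is that $\delta^m_y$ is the normalized Hausdorff measure on $\partial B_{R_m}(y)$. In the Poisson case, $g(x-\cdot)$ is harmonic on $B_{R_m}(y)$ whenever $|x-y|>R_m$, so the mean value property gives $G\delta^m_y(x) = g(x-y) = 1/(4\pi|x-y|)$, whereas for $|x-y|<R_m$ the classical identity for the potential of a uniform spherical shell yields the constant value $1/(4\pi R_m)\sim m$. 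Combining the two regimes yields \eqref{eq:G.delta.pointwise}, and since $G\delta^m_y$ is constant inside the ball, $\nabla G\delta^m_y$ vanishes there, while outside $|\nabla G\delta^m_y(x)| = 1/(4\pi|x-y|^2)$, yielding \eqref{eq:nablaG.delta.pointwise}. For Stokes the mean value argument has to be replaced by direct estimation based on $|g(z)|\lesssim |z|^{-1}$ and $|\nabla g(z)|\lesssim |z|^{-2}$: outside the ball the spherical average is bounded by the pointwise value at the center, while on the sphere these are controlled by $R_m^{-1}$ and $R_m^{-2}$, respectively.

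For \eqref{eq:A.delta.pointwise} the plan is to apply the resolvent identity $A = G - AVG$, which is the second equation in \eqref{eq:relation.A.G}. This gives $A\delta^m_y = G\delta^m_y - A(VG\delta^m_y)$, so it suffices to prove $\|A(VG\delta^m_y)\|_{L^\infty(\R^3)} \le C$ uniformly in $y,m$. The already-established pointwise bound \eqref{eq:G.delta.pointwise} shows that $VG\delta^m_y$ is supported in $\supp V$ and uniformly bounded in $L^2(\R^3)$, because $\int_K (|x-y|+m^{-1})^{-2}\dd x$ is uniformly bounded thanks to the local integrability of $|z|^{-2}$ in $\R^3$. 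Together with the embedding $L^{6/5}\hookrightarrow \dot H^{-1}$, which uses the compact support, this gives a uniform bound in $\dot H^{-1}$ as well, and the mapping property of $A$ from $L^2\cap \dot H^{-1}$ to $L^\infty$ stated after \eqref{eq:relation.A.G} completes the bound. The local $L^2$ estimate \eqref{eq:G.delta.L^2_loc} is then immediate from integrating \eqref{eq:G.delta.pointwise} and \eqref{eq:A.delta.pointwise} over $K'$ and invoking the same integrability.

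Finally, for the negative Sobolev bound \eqref{est:dirac.-1}, I would use the duality identity
\[
\|\delta^m_y\|_{\dot H^{-1}(\R^3)}^2 = \langle \delta^m_y, G\delta^m_y\rangle,
\]
obtained by testing the dual pairing with the optimal test function $\varphi = G\delta^m_y$ (for Stokes invoking $G = GP_\sigma$ to justify this on divergence-free test fields). By the explicit pointwise computation from the first paragraph, $G\delta^m_y$ equals the constant $1/(4\pi R_m)\sim m$ on $\partial B_{R_m}(y)$ in the Poisson case, and takes a value of the same order for the Stokes kernel; integrating this constant against $\delta^m_y$ then produces a pairing of order $m$, yielding $\|\delta^m_y\|_{\dot H^{-1}} \le Cm^{1/2}$. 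The main obstacle I expect is the Stokes case, where the tensor-valued Stokeslet and the divergence constraint rule out the clean harmonic mean value arguments available for Poisson, forcing direct pointwise computations on the sphere; however, the scaling structure is identical to the scalar case, so this is a matter of keeping track of constants and vectorial indices rather than of finding new ideas.
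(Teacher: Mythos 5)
Your overall architecture matches the paper's: explicit computation of the spherical single layer potential for Poisson, the identity $A\delta^m_y = G\delta^m_y - A(VG\delta^m_y)$ plus the stated $L^\infty$ mapping property for \eqref{eq:A.delta.pointwise}, integration of the pointwise bounds for \eqref{eq:G.delta.L^2_loc}, and an energy/duality computation for \eqref{est:dirac.-1}; the Poisson half of your argument is fine. The genuine gap is the Stokes case of \eqref{eq:nablaG.delta.pointwise}. You propose to put the modulus inside the spherical average and use $|\nabla g(z)|\lesssim |z|^{-2}$, claiming that outside the ball the average is bounded by the value at the centre and that on the sphere it is controlled by $R_m^{-2}$. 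For the majorant $|x-z|^{-2}$ this is false: with $d=|x-y|$ one has
\begin{align}
	\fint_{\partial B_{R_m}(y)} \frac{\dd \H^2(z)}{|x-z|^2} \;=\; \frac{1}{2\, d\, R_m}\,\log\frac{d+R_m}{|d-R_m|},
\end{align}
which diverges logarithmically as $d\to R_m$ from either side and is infinite for $d=R_m$. So the triangle-inequality bound fails exactly in the region $\bigl| |x-y|-R_m \bigr|\ll R_m$, where \eqref{eq:nablaG.delta.pointwise} demands the bound $Cm^2$; in particular your claim that the averaged gradient is controlled by $R_m^{-2}$ ``on the sphere'' cannot hold, and inside the ball the same divergence appears near the boundary even though the true field there is constant. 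The cancellation in the vector-valued average is essential and invisible after taking absolute values. The paper instead uses the well-known closed form of the Stokes single layer potential of a sphere, $G\delta^m_y = m\,\mathrm{Id}$ in $B^m(y)$ and $G\delta^m_y(x) = g(x-y)-\tfrac{R_m^2}{6}\Delta g(x-y)$ outside (see \eqref{eq:GdeltaStokes}), from which \eqref{eq:G.delta.pointwise} and \eqref{eq:nablaG.delta.pointwise} follow at once, since the gradient vanishes inside and $R_m^2|\nabla\Delta g(x-y)|\lesssim |x-y|^{-2}$ for $|x-y|\geq R_m$. You need this formula (or an equivalent regularity argument for single layer potentials); direct estimation does not deliver the stated pointwise gradient bound.

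A smaller issue: for Stokes, the pairing $\langle\delta^m_y, G\delta^m_y\rangle$ with the Stokes solution operator equals $\|P_\sigma\delta^m_y\|_{\dot H^{-1}}^2$, i.e.\ it only bounds the divergence-free projection from below, so invoking $G=GP_\sigma$ does not by itself give the full dual norm in \eqref{est:dirac.-1}. The clean repair is to compute componentwise with the Newtonian potential (equivalently, argue as the paper does: \eqref{eq:nablaG.delta.pointwise} gives $\|\nabla G\delta^m_y\|_{L^2(\R^3)}^2\lesssim m$ and one concludes via the isometry of the Laplace Green operator from $\dot H^{-1}(\R^3)$ to $\dot H^1(\R^3)$). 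This point is easily fixed; the gradient bound above is the substantive gap.
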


\begin{lem}\label{lem:delta-delta_m}
	For every \(0\leq s < \frac{1}{2}\) and every bounded set \(K'\)
	\begin{align}\label{eq:GdeltaHs}
		\sup_{y \in \R^3} \|A\delta_y\|_{H^s(K')} + \|G\delta_y\|_{H^s(K')} \leq C_s(K').
	\end{align}
% 	\rhcomment{I added the estimate for $A$ which is frequently used. Needs to be mentioned in the proof.}
	Furthermore, for every \(0<\eps\leq\frac{1}{2}\) 
	\begin{align}\label{eq:delta-delta_Hs}
		\|\delta_y^m-\delta_y\|_{H^{-3/2-\eps}(K')} \leq C(K')m^{-\eps}.
	\end{align}
\end{lem}

\begin{lem} \label{lem:G-1Gm}
	For any $k \in \N$, $G^m$ is a bounded operator from $\dot H^k(\R^3)$ to $\dot H^{k+2}(\R^3)$. Moreover, there is a constant $C$ that depends only on \(k\) such that
	\begin{align}
		\|G - G^m\|_{\dot H^{k}(\R^3) \to \dot  H^{k}(\R^3)} \leq C m^{-2}, \label{eq:G-G^m.H^k.to.H^k} \\
		\|G - G^m\|_{\dot H^{k}(\R^3) \to \dot  H^{k+1}(\R^3)} \leq C m^{-1}. \label{eq:G-1GmHktoHk+1}
	\end{align}
%	In particular, \rhcomment{Check whether still needed. In that case need to say something about the proof}
%	\begin{align}
%		\| V - G^{-1} G^m \V \|_{\dot H^{1} \to \dot  H^{-1} \cap L^2} \lesssim m^{-1}. \label{eq:V-G^-1G^mV}
%	\end{align}
\end{lem}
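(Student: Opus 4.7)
The strategy is to view $G - G^m$ as convolution with the compactly supported kernel $k_m := g - g^m$, so that the short spatial support $\supp k_m \subset B_{3R_m}(0)$ with $R_m \sim m^{-1}$ translates directly into small operator norm via Young's inequality. Explicitly, $k_m = \eta_m g$ for the Poisson case and $k_m = \eta_m g - \psi_m$ for Stokes. Since convolution commutes with differentiation, one has
\begin{align}
\|(G-G^m) f\|_{\dot H^{k+j}(\R^3)} = \|\nabla^j k_m \ast \nabla^k f\|_{L^2(\R^3)} \leq \|\nabla^j k_m\|_{L^1(\R^3)}\|f\|_{\dot H^k(\R^3)}
\end{align}
for $j \in \{0,1\}$. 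Thus \eqref{eq:G-G^m.H^k.to.H^k} and \eqref{eq:G-1GmHktoHk+1} reduce to $\|k_m\|_{L^1} \lesssim m^{-2}$ and $\|\nabla k_m\|_{L^1} \lesssim m^{-1}$.

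For the $\eta_m g$ part, these bounds follow at once from $|g(x)| \lesssim |x|^{-1}$, $|\nabla g(x)| \lesssim |x|^{-2}$, and $\supp \eta_m \subset B_{3R_m}(0)$. For the Bogovski correction $\psi_m$ (Stokes case only), I exploit that the Stokes tensor is divergence free, so $\dv(\eta_m g) = \nabla \eta_m \cdot g$ is supported in the annulus $B_{3R_m}\setminus B_{2R_m}$ with $L^\infty$-bound $\lesssim R_m^{-2}$ and hence $L^2$-bound $\lesssim R_m^{-1/2}$. Applying \eqref{eq:Bogovski.estimate} with $p=2$, $k=1$, together with Poincaré on $B_{3R_m}$ yields $\|\nabla \psi_m\|_{L^2} \lesssim R_m^{-1/2}$ and $\|\psi_m\|_{L^2} \lesssim R_m^{1/2}$; Cauchy--Schwarz on the small support then upgrades these to the required $L^1$-bounds $\|\psi_m\|_{L^1} \lesssim R_m^2$ and $\|\nabla \psi_m\|_{L^1} \lesssim R_m$.

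For the mapping property $G^m \colon \dot H^k \to \dot H^{k+2}$, I identify $g^m$ with a potential $g^m = Gh_m$ for some compactly supported $h_m \in L^1(\R^3)$ with $\|h_m\|_{L^1} \lesssim 1$ uniformly in $m$. Granted this, $G^m f = g^m \ast f = G(h_m \ast f)$ and the claim follows from the classical Calder\'on--Zygmund bound $\|G\|_{\dot H^k \to \dot H^{k+2}} \lesssim 1$ combined with Young's inequality. For Poisson, differentiating directly gives $-\Delta g^m = 2\nabla\eta_m \cdot \nabla g + g\Delta \eta_m =: h_m$ (the $(1-\eta_m)\delta_0$ contribution vanishes since $\eta_m \equiv 1$ near $0$), supported in the annulus with $\|h_m\|_{L^1} \lesssim 1$; uniqueness via decay at infinity gives $g^m = Gh_m$. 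For Stokes, I set $p^m := (1-\eta_m) q$, with $q$ the Stokes fundamental pressure, so that the bulk contribution $(1-\eta_m)\nabla q$ in $-\Delta g^m$ is absorbed into $\nabla p^m$ up to the annular error $-q\nabla \eta_m$. A direct computation yields
\begin{align}
-\Delta g^m + \nabla p^m = h_m^{\mathrm{St}} := -q\nabla\eta_m + 2\nabla\eta_m \cdot \nabla g + g\Delta\eta_m - \Delta \psi_m,
\end{align}
all annulus-supported with $\|h_m^{\mathrm{St}}\|_{L^1} \lesssim 1$ (applying \eqref{eq:Bogovski.estimate} with $p=2$, $k=2$, plus Cauchy--Schwarz for the $\Delta \psi_m$ term). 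Together with $\dv g^m = 0$, this identifies $g^m = G h_m^{\mathrm{St}}$ and closes the argument.

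The main technical point is the Stokes identification: one must choose the pressure $p^m$ precisely so that all contributions to $-\Delta g^m + \nabla p^m$ live on the small annulus, and then transfer the Bogovski $L^p$-bounds into $L^1$-bounds using Cauchy--Schwarz on the small support. All other estimates reduce to routine pointwise computations on $B_{3R_m}(0)$.
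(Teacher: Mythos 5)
Your proposal is correct, and for the two operator-norm estimates \eqref{eq:G-G^m.H^k.to.H^k} and \eqref{eq:G-1GmHktoHk+1} it is essentially the paper's argument: write $G-G^m$ as convolution with the kernel $\eta_m g-\psi_m$ (resp. $\eta_m g$ for Poisson), apply Young's inequality, and verify the $L^1$-bounds of the kernel and its gradient; your treatment of $\psi_m$ (Bogovski in $L^2$ via \eqref{eq:Bogovski.estimate}, Poincar\'e, then Cauchy--Schwarz on the small support) is the same computation the paper performs through $L^p$-bounds, H\"older and Poincar\'e. Where you genuinely deviate is the mapping property $G^m\colon\dot H^k(\R^3)\to\dot H^{k+2}(\R^3)$: the paper deduces it from the $l=2$ case of the kernel bound \eqref{eq:bar.g_m.L^1}, which is the delicate endpoint, since near the origin $\eta_m g=g$ and $\nabla^2 g\sim |x|^{-3}$ is not integrable there (the pointwise bound $|\nabla^l(\eta_m g)|\lesssim m^{1+l}\1_{B_{3R_m}\setminus B_{2R_m}}$ used in the paper is only valid on the annulus, and the distributional Hessian of $\eta_m g$ even carries a singular-integral and Dirac part), so that step really needs an argument beyond Young's inequality. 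Your alternative supplies exactly that: identifying $g^m=Gh_m$ with an annulus-supported source $h_m$ of bounded $L^1$-mass --- for Stokes after absorbing $(1-\eta_m)\nabla q$ into the pressure $p^m=(1-\eta_m)q$, and estimating $\Delta\psi_m$ via Bogovski with $k=2$ --- and then writing $G^mf=G(h_m\ast f)$ so that the gain of two derivatives comes from the Calder\'on--Zygmund bound for $G$ and Young's inequality for $h_m$. This route costs you the Stokes pressure bookkeeping and the uniqueness-by-decay identification (both of which you carry out correctly), and in exchange it avoids the borderline $l=2$ kernel estimate altogether; for the $l=0,1$ bounds actually needed in \eqref{eq:G-G^m.H^k.to.H^k}--\eqref{eq:G-1GmHktoHk+1}, your direct use of $|g|\lesssim|x|^{-1}$, $|\nabla g|\lesssim|x|^{-2}$ on $B_{3R_m}$ is sound and gives the same rates as the paper.
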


\subsection{Proof of Proposition \ref{pro:tau_m}} \label{sec:Proof.tau_m}
For the proof of Proposition \ref{pro:tau_m}, we first introduce another function, $\sigma_m$, intermediate between $\tau_m$ and $\xi_m$.
We first show that $\xi_m$ is close to $\sigma_m$ in the following lemma,
which we will also use in the proof of Lemma \ref{lem:variance.in.balls}.

From now on, we will use the notation \(A\lesssim B\) for scalar quantities \(A\) and \(B\) whenever there is a constant \(C>0\)  such that \(A\leq CB\) and where \(C\) depends neither directly nor indirectly on \(m\).

\begin{lem} \label{lem:xi.rho}
	Using the notation from \eqref{def:W_i} and \eqref{def:F}, let $\sigma_m$ be defined by
	\begin{align} \label{def:sigma_m}
	\sigma_m &:= AG^{-1} \hat \Theta_m, \\
	m^{-1/2} \hat \Theta_m &:= G F - \frac 1 m \sum_{i=1}^m G(W_i \delta^m_i).
\end{align}
Then, for every bounded $K' \subset \R^3$
\begin{align}\label{eq:xi-rho}
	\E_m \left[ \|\xi_m - \sigma_m \|^2_{L^2(K')} \right] \leq C m^{-1}
\end{align}
and
\begin{align}\label{eq:xi-rho.grad}
	\E_m \left[ \|\nabla\xi_m - \nabla\sigma_m \|^2_{L^2(\R^3)} \right] \leq C m.
\end{align}
\end{lem}
\begin{proof}
Let $K$ be the set defined in \eqref{eq:def.K}.
We argue that $A G^{-1}$ satisfies 
\begin{align} \label{eq:AG^-1.L^2}
	\|A G^{-1} w \|_{L^2(K')} \lesssim \|w \|_{L^2(K')}
\end{align}
for any $K' \supset K$ and  any (divergence free) $w \in L^2(K')$.
Indeed, by \eqref{eq:relation.A.G}, we observe that
\begin{align}
	AG^{-1}  = (1 - A \rho) P_\sigma,
\end{align}
and therefore \eqref{eq:AG^-1.L^2} follows from the regularity of $A\rho$ discussed after \eqref{eq:relation.A.G}.

 We recall that both $G$ and $G^m$ (cf. \eqref{eq:g^m.Stokes}) map to divergence free functions.
Thus, by \eqref{eq:AG^-1.L^2}, we have for any bounded set $K' \supset K$
	\begin{align}
		 \E_m \left[\|\xi_m - \sigma_m\|_{L^2(K')}^2\right] 
		 &= \frac 1 m  \E_m \left[ \left\|\sum_i AG^{-1}
		  (G- G^m) (W_i \delta^m_i)\right\|_{L^2(K')}^2\right] \\
		 &\lesssim \frac 1 m  \E_m \left[ \sum_i \sum_{j \neq i} \int_{K'}  \Bigl(AG^{-1}(G - G^m) (W_i \delta^m_i) \Bigr) \Bigl( AG^{-1}(G- G^m) (W_j \delta^m_j) \Bigr)\right] \\
		 &+\frac 1 m  \E_m \left[ \sum_i  \int_{K'}  \left|(G- G^m) (W_i \delta^m_i)\right|^2 \right]\\
		 &=: I_1 + I_2.
	\end{align}
	Recalling the notation \eqref{def:Fcal} and using \eqref{eq:G-G^m.H^k.to.H^k}, we deduce
	\begin{align*}
		I_1 = (m-1)  \|AG^{-1}(G - G^m) \Fcal \|_{L^2(K')}^2 \lesssim (m-1)  \|(G - G^m) \Fcal \|_{L^2(K')}^2  \lesssim  m^{-3}  \| \Fcal \|_{L^2(\R^3)}^2 
		\lesssim m^{-3}
	\end{align*}
	due to \eqref{est_W_i.F}.	
	It remains to bound $I_2$.
%	\begin{align}
%		I_2 \leq  \int_{K'} \int_{\R^3} V(y) |(G - G^m)((u)_y \delta^m_{y})|^2(x) \dd y \dd x  
%	\end{align}
	By combining \eqref{eq:G-1GmHktoHk+1} with \eqref{est:dirac.-1}, we obtain
	\begin{align}
		\|(G- G^m)(\delta_{y}^m)\|_{L^2(\R^3)}^2 \lesssim m^{-2} \|\delta_y^m\|_{\dot{H}^{-1}(\R^3)}^2 \lesssim m^{-1}.
	\end{align}
	Thus, by \eqref{est_W_i.F}
	\begin{align}
		I_{2} \lesssim m^{-1} \E_m[W_1]^2 \lesssim m^{-1}.
	\end{align}	
	For the gradient estimate, we can argue similarly: Since  $AG^{-1}$  is bounded from $\dot H^1(\R^3)$ to $\dot H^1(\R^3)$
	\begin{align}
	\E_m \left[\|\nabla(\xi_m - \sigma_m)\|_{L^2(\R^3)}^2\right] 
	&= \frac 1 m  \E_m \left[ \left\|\sum_{i=1}^{m}  \nabla AG^{-1}
	(G- G^m) (W_i \delta^m_i)\right\|_{L^2(\R^3)}^2\right] \\
	&\lesssim \frac 1 m  \E_m \left[ \sum_{i=1}^{m} \sum_{j \neq i} \int_{\R^3}  \Bigl( \nabla AG^{-1}(G - G^m) (W_i \delta^m_i) \Bigr) \Bigl( \nabla AG^{-1}(G- G^m) (W_j \delta^m_j) \Bigr)\right] \\
	&+\frac 1 m  \E_m \left[ \sum_{i=1}^{m}  \int_{\R^3}  \left| \nabla (G- G^m) (W_i \delta^m_i)\right|^2 \right]\\
	&=: I_1 + I_2.
	\end{align}
	Using \eqref{eq:G-1GmHktoHk+1}, we deduce
	\begin{align*}
	I_1 = (m-1)  \|\nabla AG^{-1}(G - G^m) \Fcal \|_{L^2(\R^3)}^2 \lesssim (m-1)  \|\nabla (G - G^m) \Fcal \|_{L^2(\R^3)}^2\lesssim m^{-1}  \| \Fcal \|_{L^2(\R^3)}^2 \lesssim m^{-1}.
	\end{align*}
	It remains to bound $I_2$.
	%	\begin{align}
	%		I_2 \leq  \int_{K'} \int_{\R^3} V(y) |(G - G^m)((u)_y \delta^m_{y})|^2(x) \dd y \dd x  
	%	\end{align}
	Using that both $G^m$ and $G$ are bounded operators from $H^{-1}$ to $\dot H^1$, we find with \eqref{est:dirac.-1}
	\begin{align}
	\|\nabla (G- G^m)(\delta_{y}^m)\|_{L^2(\R^3)}^2 \lesssim  \|\delta_y^m\|_{\dot{H}^{-1}(\R^3)}^2 \lesssim m.
	\end{align}
	Thus,
	\begin{align}
	I_{2} \lesssim m \E_m[W_1^2] \lesssim m.
	\end{align}	
	This finishes the proof.
\end{proof}

\begin{cor}\label{cor:xi-sigma_Hs}
	For every \(0\leq s < \frac{1}{2}\) and every \(K'\subset \R^3\) bounded, there is a constant \(C_s(K')>0\) independent of \(m\)	such that
		\[\E_m\left[\|\xi_m-\sigma_m\|_{H^s(K')}^2\right] \leq C_s(K')m^{-1+2s}.\]
\end{cor}

\begin{proof}
	This follows from Lemma \ref{lem:xi.rho} and the interpolation inequality
	\begin{align*}
		\E_m\left[\|\xi_m-\sigma_m\|_{H^s(K')}^2\right] & \lesssim \E_m\left[ \|\xi_m-\sigma_m\|_{L^2(K')}^{2(1-s)}\|\nabla \xi_m- \nabla\sigma_m\|_{L^2(K')}^{2s} \right] \\
		& \leq \E_m\left[ \|\xi_m-\sigma_m\|_{L^2(K')}^{2} \right]^{1-s}\E_m\left[\|\nabla \xi_m- \nabla\sigma_m\|_{L^2(K')}^{2} \right]^s \\
		& \lesssim m^{-1+2s}.
	\end{align*}
	This finishes the proof.
\end{proof}

\begin{proof}[Proof of Proposition \ref{pro:tau_m}]
By Lemma \ref{lem:xi.rho}, it suffices to prove 
\begin{align} \label{eq:sigma-tau.proof}
	\E_m \left[ \|\sigma_m - \tau_m\|^2_{H^s(K')} \right] &\leq C m^{-1+2s} \\
	\E_m[\|\tau_m\|_{\dot{H}^s(K')}^2] &\leq C_s(K')
\end{align}
 for every \(0\leq s < \frac{1}{2}\). 
%  Indeed, this directly implies \eqref{eq:xi-tau}. The uniform bound \eqref{eq:xi.L^2}
% then follows from the following uniform bound for $\tau_m$
% \begin{align*}
% 	\E_m[\|\tau_m\|_{\dot{H}^s(K')}^2] \leq C_s(K')
% \end{align*}
% for all \(0\leq s < \frac{1}{2}\) and Lemma \ref{lem:xi.rho}.
We introduce $\tilde W_i := u(X_i) - V_i$. It is easily seen that $\E_m[\tilde{W}_1^2] \leq C$ and \(\E_m[|W_1-\tilde{W}_1|] \lesssim \frac{1}{m}\) uniformly in $m$. Since \(\tilde{W}_i\delta_{X_i}\) are independent identically distributed random variables, we obtain

\begin{align*}
	\E_m\left[\|\tau_m\|_{\dot{H}^s(K')}\right] & = \frac{1}{m} \E_m\left[\norm{\sum_{i=1}^{m} AF- A\tilde{W}_i\delta_{X_i}}_{\dot{H}^s(K')}^2\right] \\
	& = \E_m\left[\norm{AF- A\tilde{W}_1\delta_{X_1}}_{\dot{H}^s(K')}^2\right] \\
	& \leq C_s(K')
\end{align*}
by \eqref{eq:GdeltaHs}.

Finally, have to estimate \(\sigma_m-\tau_m\):
\begin{align*}
	 \E_m \left[\|\sigma_m - \tau_m\|_{\dot{H}^s(K')}^2\right] 
	 & = \frac{1}{m} \E_m \left[ \norm{\sum_{i=1}^{m} A\bigl( W_i\delta_i^m - \tilde{W}_i \delta_{X_i}\bigr)}_{\dot{H}^s(K')}^2\right] \\
	 & \leq \frac{1}{m} \sum_{i,j=1}^{m}\E_m \left[ \norm{A\bigl( W_i\delta_i^m - \tilde{W}_i \delta_{X_i}\bigr)}_{\dot{H}^s(K')}\norm{A\bigl( W_j\delta_j^m - \tilde{W}_j \delta_{X_j}\bigr)}_{\dot{H}^s(K')}\right] \\
	 & = \frac{1}{m}\sum_{j \neq i=1}^{m} \E_m \left[ \norm{A\bigl( W_i\delta_i^m - \tilde{W}_i \delta_{X_i}\bigr)}_{\dot{H}^s(K')}\norm{A\bigl( W_j\delta_j^m - \tilde{W}_j \delta_{X_j}\bigr)}_{\dot{H}^s(K')}\right] \\
	 & \quad + \frac{1}{m}\sum_{i=1}^{m} \E_m \left[ \norm{A\bigl( W_i\delta_i^m - \tilde{W}_i \delta_{X_i}\bigr)}_{\dot{H}^s(K')}^2\right] \\
	 & = I_1 + I_2.
\end{align*}
For \(I_1\), notice that by \eqref{eq:F-F.L^2}
\begin{align*}
	I_1 & = (m-1)\|A(F-\tilde{F})\|_{H^s(K')}^2 \\
	& \leq (m-1)\|A(F-\tilde{F})\|_{\dot{H}^1(K')}^{2} \\
	& \leq m^{-1}.
\end{align*}
% \rhcomment{I don't understand: We even have $\|A(F-\tilde{F})\|_{\dot{H}^1(K')} \lesssim m^{-1}$, no?}
% by \eqref{eq:F-F.L^2}.
For \(I_2\), we estimate
\begin{align*}
	\|A(W_i\delta_i^m - \tilde{W}_i\delta_{X_i})\|_{\dot{H}^s(K')} & \leq \norm{A(W_i-\tilde{W}_i)\delta_i^m}_{\dot{H}^s(K')} + \norm{A\tilde{W}_i(\delta_i^m-\delta_{X_i})}_{\dot{H}^s(K')} \\
	& \leq |W_i-\tilde{W}_i| \|A\delta_i^m\|_{\dot{H}^s(K')} + |\tilde{W}_i|\|A(\delta_i^m-\delta_{X_i})\|_{\dot{H}^s(K')} \\
	& \lesssim |W_i-\tilde{W}_i| + m^{s-\frac{1}{2}}|\tilde{W}_i|
\end{align*}
by \eqref{eq:GdeltaHs} and by combining \eqref{eq:delta-delta_Hs} with the fact that \(A\) is a bounded operator from \(\dot{H}^{s-2}(K')\) to \(\dot{H}^s(K')\).
Inserting this above, we find that
\begin{align*}
	I_2 & \lesssim \frac{1}{m} \sum_{i=1}^{m} \E_m\left[\bigl(W_i-\tilde{W}_i| + m^{s-\frac{1}{2}}|\tilde{W}_i|\bigr)^2\bigr)\right] \\
	& \lesssim \E_m\left[|W_i-\tilde{W}_i|^2\right] + m^{-1+2s}\E_m\left[|\tilde{W}_i|^2\bigr)\right] \\
	& \lesssim m^{-1+2s}.
\end{align*}
	Combining the estimates for $I_1$ and $I_2$ yields \eqref{eq:sigma-tau.proof} which finishes the proof.
\end{proof}

\subsection{Proof of Lemma \ref{lem:variance.in.balls}} \label{sec:proof.variance.in.balls}

%
%We now give the lengthy proof of Lemma \ref{lem:variance.in.balls}. 
	
%	Without restriction, we also assume \rhcomment{Just put it into the constant $C$.}
%	\begin{align}
%		\|V\|_{W^{1,\infty}(\R^3)} \leq V_\infty.
%	\end{align}

We begin the proof of Lemma \ref{lem:variance.in.balls} by observing that we have actually already proved the required estimate for $\Lambda^m$. Indeed, 
$\Lambda^m = m^{-1/2} (\Theta^m - \hat \Theta^m)$ with $\hat \Theta_m$ as in Lemma \ref{lem:xi.rho}. Moreover, in the proof of Lemma \ref{lem:xi.rho}, we showed $\|\Theta^m - \hat \Theta^m\|_{L^2_\loc(\R^3)}^2 \lesssim m^{-1}$.

We divide the rest of proof of Lemma \ref{lem:variance.in.balls} into three steps corresponding to the three terms 
\begin{align}
	I_1 &:= \E_m\left[\1_{\Ocal_m} \|\nabla (u + G (\rho u - j))\|^2_{L^2 (\cup_i B_i)}\right], \\
	I_2 &:= \E_m \left[ \1_{\Ocal_m} \|\nabla \Gamma_m\|^2_{L^2 (\cup_i B_i)} \right], \\
	I_3 &:= m^2 \E_m\left[\1_{\Ocal_m}  \|\Xi_m\|^2_{L^2 (\cup_i B_i)}\right] + \E_m\left[\1_{\Ocal_m}  \|\tilde \Xi_m\|^2_{L^2(K')} \right] + \E_m\left[\1_{\Ocal_m}  \|\nabla \Xi_m\|^2_{L^2(\cup_i B_i)} \right],\qquad \label{eq:I_3}
\end{align}
where $K'$ is a bounded set.
We need to prove $I_k \leq C m^{-2}$ for $k=1,2,3$, uniformly in $m$ with a constant depending only on $h$, $\rho$ and $K'$.
	
%	We first observe\rhcomment{I'm not sure if it is convenient to to this for the final presentation, but, at the moment, it seems to make life much easier.} that except for $I_1$, where $\nabla u$ appears,
%	$u$ only appears as $V u$ or $(u)_i$. Hence, $u$ is always evaluated in a bounded region, only.
%	Thus (by truncating $u$ in a suitable way), we might assume $u \in H^1(\R^3)$ and not only $u \in \dot H^1(\R^3)$,
%	and that we can bound the $L^2$ norm of $u$ by a constant depending only on $f$. 
\bigskip

\subsubsection*{Step 1: Estimate of $\mathbf{I_1}$.}

\noindent 
Since $\nabla Gh \in L^2(\R^3)$ is deterministic, and the positions of the particles $B_i$ are independent, 
we estimate
\begin{align}
	I_1 = \E_m\left[\1_{\Ocal_m}\|\nabla Gh\|^2_{L^2 (\cup_i B_i)}\right] &\leq \E_m\left[\|\nabla Gh\|^2_{L^2 (\cup_i B_i)}\right] 
	= m^{-2} \int_{\R^3} (\rho)_x |\nabla Gh|^2 \dd x \\
	&\lesssim m^{-2} \|\nabla Gh\|_{L^2(\R^3)}^2 \lesssim m^{-2}.
\end{align}
Here we used \eqref{eq:()} together with  \(\rho\in L^{\infty}(\R^3)\).
%, it holds \(\|(V)\|_{L^{\infty}}\lesssim 1\) is uniformly bounded in \(m\). We will use this many times in the proof to get bounds of order \(m^{-3}\) if the random variable is independent of the center of the ball where its evaluated in.
% To conclude, we recall that $G \rho$ is a bounded operator on $\dot H^1(\R^3)$ and \(j\in\dot{H}^1(\R^3)\) by assumption \ref{ass:j}.

\subsubsection*{Step 2: Estimate of $\mathbf{I_2}$.}

\noindent
Since $\Gamma_m$ depends on $m$, the computation is more involved.  
According to the definition of $\Gamma$, we split $I_2$ again. More precisely, it suffices to estimate
\begin{align}
	 I_{2,1} &:=  \E_m\left[\left\| \nabla G\left[\sum_{j \neq i} \frac{(u)_j-V_j}{m} \delta^m_j\right] \right\|^2_{L^2(\cup_i B_i)} \right], \\
	 I_{2,2} &:=  \E_m\left[\| \nabla  G(\rho m^{-\frac{1}{2}} \xi_m) \|^2_{L^2(\cup_i B_i)} \right].
\end{align}
In the first term, we used that for $(Z_1,\dots,Z_m) \in \Ocal_m$ we can replace $G^m$ by $G$ according to \eqref{eq:prop.G^m_lambda}.

We first consider $I_{2,1}$. We expand the square to obtain for any fixed $i$
\begin{align}
	I_{2,1} &= m \E_m\left[\int_{ B_i}  \left( \nabla G\left[ \frac 1 m \sum_{j \neq i} \frac{(u)_j-V_j}{m} \delta^m_j\right]\right)(x)  \left( \nabla G\left[ \frac 1 m \sum_{k \neq i} \frac{(u)_k-V_k}{m} \delta^m_k\right]\right)(x) \right] \\
	&=: \frac 1 m \sum_{j \neq i} \sum_{k \neq i} I_{2,1}^{j,k}.
\end{align}
% \rhcomment{I change notation for $I_{2,1}^{j,k}$ and correspondingly later to be in accordance with step 3: It is a single term, not a sum and a priori $j=k$ is allowed. Please double check!}
We distinguish the cases $j\neq k$ and $j=k$.
In the case $j \neq k$, we apply a similar reasoning as for $I_1$: due to the  independence of $Z_i$, $Z_j$, $Z_k$, we have with $\F$ as in \eqref{def:Fcal}
\begin{align} \label{eq:I_21.1}
	I^{jk}_{2,1} & = m^{-4} \int_{\R^3} (\rho)_x \left(\int_{\R^3\times\R^3}\nabla G \left[ \left((u)_y-v\right) \delta^m_y \right](x) f(\de y,\de v)\right)^2 \dd x \\
	& = m^{-4}  \int_{\R^3} (\rho)_x \left(\nabla G[\F](x) \right)^2 \dd x \lesssim m^{-4}\|\nabla G[\F]\|_{L^2(\R^3)}^2, 
\end{align}
where we used again \eqref{eq:()}. Since  by \eqref{est_W_i.F}, $\F$ is bounded in \(\dot{H}^{-1}(\R^3)\), we therefore conclude that
\begin{align}
\sum_{j \neq i} \sum_{k \not \in \{i,k\}} I^{j k}_{2,1} \lesssim  m^{-2}.
\end{align}

%Inserting this into \eqref{eq:I_21.1} yields
%\begin{align}
%	I^{j k}_{2,1} &\leq m \int_{\R^3} V_m(x) \left(\nabla G \left[ [V (u)_\cdot]_\cdot \right](x)\right)^2 \\
%			&\lesssim m^{-2}\|\nabla G_\lambda \left[ [V (u)_\cdot]_\cdot \right]\|_{L^2(\R^3)}^2 \\
%			& \lesssim  m^{-2}  \|[V(u)_\cdot]_\cdot\|_{\dot H^{-1}(\R^3)}^2,
%\end{align}
%where the constant depends only on $\lambda$. 

%We observe that the Poincar\'e inequality for a function $v \in H^1(\R^3)$
%\begin{align*}
%	\|v - [v]_y\|_{L^2(B{1/m}(y)} \leq C m^{-2} \|\nabla v\|_{L^2(B{1/m}(y)}
%\end{align*}
%implies
%\begin{align}
%	[v]_y^2 \leq C ( m^3 \|v\|^2_{L^2(B{1/m}(y)} +  m \|\nabla v\|^2_{L^2(B{1/m}(y)}).
%\end{align}
%Thus,
%\begin{align}
%	\|[v]_\cdot\|_{L^2(\R^3)} \leq C \int_{\R^3} \int_{B{1/m}(y)} m^3 |v(z)|^2  + m |\nabla v(z)|^2 \dd z \dd y 
%	\leq C (\|v\|_{L^2(\R^3)}^2 +  m^{-2}\|\nabla v\|_{L^2(\R^3)}^2).
%\end{align}
%
%Therefore,
%\begin{align}
%	\|[(u)_\cdot]_\cdot\|_{L^2(\R^3)}^2 \leq C  (\|(u)_\cdot\|_{L^2(\R^3)}^2 + m^{-2}\|\nabla (u)_\cdot\|_{L^2(\R^3)}^2 .
%\end{align}
%
%{\red Actually, there is a much simpler way: 

%Now\footnote{How to see this? Reference?}
%\begin{align}
%	\nabla (u)_y = \nabla_y \int_{B^m(y)} u(z) \dd z = \int_{\partial B^m(y)} u(z) n \dd z = \int_{B^m(y)} \nabla u(z) n \dd z =(\nabla u)_y
%\end{align}
%Thus, using also $(u)_y \leq (Mu)(y)$ where $M$ denotes the maximal operator, we conclude
%\begin{align} \label{est:L^2.averages}
%	\|[(u)_\cdot]_\cdot\|_{L^2(\R^3)}^2 \leq C  (\|Mu\|^2_{L^2(\R^3)} + \|\nabla Mu\|^2_{L^2(\R^3)}) \leq C  (\|u\|^2_{L^2(\R^3)} + \|\nabla u\|^2_{L^2(\R^3)})
%\end{align}

It remains to estimate $I_{2,1}^{jj}$.
We compute
\begin{align}
	I_{2,1}^{jj} &= m^{-4}\int_{\R^3} (\rho)_x \int_{\R^3\times\R^3} \left( \nabla G \left[ \left((u)_y-v\right) \delta^m_y \right](x) \right)^2  f(\de y,\de v) \dd x\\
		&\lesssim m^{-4} \int_{\R^3\times\R^3} \left((u)_y-v\right)^2 
		 \|\nabla G  \delta^m_y \|_{L^2(\R^3)}^2 f(\de y,\de v).
\end{align}
By \eqref{est:dirac.-1}
 \begin{align}
	\|\nabla G  \delta^m_y \|_{L^2(\R^3)}^2 \lesssim  m.
\end{align}
Combining this with \eqref{eq:V()}, we conclude
\begin{align}
	\sum_{j \neq i} I_{2,1}^{jj} & \lesssim m^{-2} \int_{\R^3\times\R^3}  \left((u)_y-v\right)^2 f(\dd y,\dd v) \lesssim m^{-2} \left(\| \rho^{1/2} (u)_\cdot\|_{L^2(\R^3)}^2 + \int_{\R^3\times \R^3} |v|^2 f(\de y, \de v)\right) \\
	& \lesssim m^{-2} \left(\|u\|^2_{L^2(K)} + \int_{\R^3\times \R^3} |v|^2 f(\de y, \de v)\right) \lesssim m^{-2},
\end{align}
by assumption \ref{ass:energy}.

\medskip

We now turn to $I_{2,2}$. We estimate
\begin{align}
	I_{2,2} \leq  \E_m\left[\| \nabla  G(\rho m^{-\frac{1}{2}} \sigma_m) \|^2_{L^2(\cup_i B_i)} \right] +  \E_m\left[\| \nabla  G(\rho m^{-\frac{1}{2}} (\xi_m - \sigma_m) \|^2_{L^2(\cup_i B_i)} \right],
\end{align}
with $\sigma_m$ from Lemma \ref{lem:xi.rho}.
Using this lemma and the fact that $G\rho$ is a bounded operator from $\dot H^1(\R^3)$ to $W^{1,\infty}(\R^3)$, we find
\begin{align}
	 \E_m\left[\| \nabla  G(\rho m^{-\frac{1}{2}} (\xi_m - \sigma_m))\|^2_{L^2(\cup_i B_i)} \right] \lesssim m^{-2}  \| m^{-\frac{1}{2}} (\xi_m - \sigma_m) \|^2_{\dot H^1(\R^3)} \lesssim m^{-2}.
\end{align}

Recalling the definition of $\sigma_m$ from Lemma \ref{lem:xi.rho}, we have
\begin{align}
		 \E_m\left[\| \nabla  G(\rho m^{-\frac{1}{2}} \sigma_m) \|^2_{L^2(\cup_i B_i)} \right] & \leq \sum_{i=1}^{m}\E_m\left[ \left\| \nabla  G\left(\rho A\left[F - \frac{1}{m}\sum_{j=1}^m  W_j \delta^m_j \right]\right) \right\|^2_{L^2(B_i)} \right] \\
		&\lesssim \sum_{i=1}^m\E_m\left[ \left\| \nabla  G\left(\rho A F\right) \right\|^2_{L^2(B_i)} \right] \\
		& + \sum_{i=1}^{m}\E_m\left[ \left\| \nabla  G\left(\rho A\left[\frac{1}{m} \sum_{j=1}^m [W_j \delta^m_j] \right]\right) \right\|^2_{L^2(B_i)} \right]  \\
		&=:  I_{2,2,1} +  I_{2,2,2}.
\end{align}
This is a very rough estimate, since we actually expect cancellations from the difference. However, these cancellations are not needed here for the desired bound. Indeed, since $G \rho A$ is a bounded operators from \(\dot{H}^{-1}(\R^3)\) to $\dot H^1 (\R^3)$, $I_{2,2,1}$ is controlled analogously as $I_1$.

It remains to estimate $I_{2,2,2}$. We expand the square again and write
\begin{align}
	I_{2,2,2} &= \sum_{i=1}^{m}\E_m \left[ 
	\int_{B_i} \left( \nabla  G	\left(\rho A\left[\frac{1}{m} \sum_{j=1}^{m} W_j \delta^m_j \right]\right) 	\right) \cdot \left( \nabla  G\left(\rho A\left[\frac{1}{m} \sum_{k=1}^{m}  W_k \delta^m_k\right]\right) \right) \dd x 
	\right] \\
	&=:  \sum_{i=1}^{m} \sum_{j=1}^{m}\sum_{k=1}^{m} I^{i,j,k}_{2,2,2}.
\end{align}

We have to distinguish the cases where all \(i,j,k\) are distinct, the case where \(j=k\) but \(j\neq i\), the case where \(i=j\) or \(i=k\) but \(j\neq k\), and, finally, the case where \(i=j=k\).

In the first case, we can proceed analogously as for $I^{j,k}_{2,1}$. In particular, we use the definition of \(\F\) to deduce
\begin{align}
	\sum_{i=1}^{m} \sum_{j\neq i}\sum_{k \not \in \{i,j\}}  I^{i,j,k}_{2,2,2} 
	&= m^{-3} \frac{m(m-1)(m-2)}{m^2} \int_{\R^3} (\rho)_x (\nabla  G \rho A  \F)^2 \dd x \\
	& \lesssim m^{-2} 
	\|\nabla  G \rho A \F \|^2_{L^2(\R^3)} 
	 \lesssim  m^{-2}  \| \F\|^2_{\dot H^{-1}(\R^3)} 
	\lesssim  m^{-2},
\end{align}
since  $ G \rho A$ is also bounded from $\dot H^{-1}(\R^3)$ to $\dot H^1(\R^3)$. 

Next, we estimate \(I_{2,2,2}^{i,j,j}\). Analogously as for $I_{2,1}^{j,j}$, we obtain
\begin{align}
	\sum_{i=1}^{m} \sum_{j\neq i} I^{i,j,j}_{2,2,2} &= m^{-3}\frac{m(m-1)}{m^2} \int_{\R^3} (\rho)_x \int_{\R^3} \left( \nabla  G\rho A ((u)_y-v) \delta^m_y(x)  \right)^2 f(\de y,\de v) \dd x\\
	&\lesssim  m^{-3} \int_{\R^3} ((u)_y - v)^2 \left\|\nabla  G \rho A\delta^m_y (x)\right\|_{L^2_x(\R^3)}^2 f(\de y,\de v).
\end{align}
Since \(\nabla G V\) is a bounded operator from \(\dot{H}^1(\R^3)\) to \(L^2(\R^3)\), we obtain by \eqref{est:dirac.-1} combined with \eqref{eq:V()} and using \ref{ass:energy}
\begin{align}
	\sum_{i=1}^{m} \sum_{j\neq i} I^{i,j,j}_{2,2,2} & \lesssim m^{-2}\left(\|\rho^{1/2}(u)\|_{L^2(\R^3)}^2 + \int_{\R^3\times\R^3} |v|^2 f(\de y, \de v) \right) \lesssim m^{-2}.
\end{align}

The third estimate concerns \(I^{i,i,k}_{2,2,2}\). By  symmetry, \(I^{i,j,i}_{2,2,2}\) is dealt with analogously. We have, using \eqref{est:dirac.-1}, \eqref{est_W_i.F}, and \eqref{eq:V()} together with \eqref{eq:()},
\begin{align}
	\sum_{i=1}^{m} \sum_{k\neq i}  I^{i,i,k}_{2,2,2} &=  \frac{m(m-1)}{m^2} \int_{\R^3} \E_m\left[\1_{B_i} \nabla G \left(\rho A \left[W_i\delta_i^m\right]\right)\right]\nabla  G \left(\rho A \left[\Fcal\right]\right) \dd x \\
	& \lesssim \|\nabla G \rho A  \mathcal{F}\|_{L^2(\R^3)} \left\|\int_{\R^3\times\R^3} \1_{B^m(y)} \nabla G \left(\rho A \left[ ((u)_y-v)\delta_y^m\right]\right)f(\de y, \de v) \right\|_{L^2(\R^3)} \\
	& \lesssim  \sup_{y\in\R^3}\|\nabla G\rho A \delta_y^m\|_{L^{\infty}} \left\| \int_{\R^3\times\R^3} ((u)_y-v)\1_{B^m(y)}f(\de y,\de v)\right\|_{L^2(\R^3)} \\
	& \lesssim m^{1/2} m^{-3}\|(\rho(u)_\cdot-j)_\cdot\|_{L^2(\R^3)} \lesssim m^{-5/2}.
\end{align}
We also used that the operator \(\nabla G\rho A\) maps \(\dot{H}^{-1}(\R^3)\) into \(L^{\infty}(\R^3)\), as well as \(j\in L^2(\R^3)\) by assumption \ref{ass:j}.

Finally, we estimate \(I^{i,i,i}_{2,2,2}\). Using \eqref{est:dirac.-1} and \eqref{eq:V()}, we obtain

\begin{align}
	\sum_{i=1}^m I^{i,i,i}_{2,2,2} & = \frac{m}{m^2} \int_{\R^3} \E_m\left[\1_{B_i} \left|\nabla G \left(\rho A \left[W_i\delta_i^m\right]\right)\right|^2)\right] \dd x \\
	& = \frac 1 m \int_{\R^3} \int_{\R^3\times \R^3} \1_{B^m(y)}\left|\nabla G \left(\rho A \left[((u)_y-v)\delta_y^m\right]\right)\right|^2 f(\de y, \de v) \dd x \\
	& \lesssim \frac{1}{m}\sup_{y\in\R^3} \|\nabla G \rho A \delta_y^m\|_{L^{\infty}(\R^3)}^2 \int_{\R^3} \int_{\R^3\times \R^3} \1_{B^m(y)} \left((u)_y-v\right)^2 f(\de y, \de v) \dd x \\
	& \lesssim \int_{\R^3} \int_{\R^3\times\R^3} \1_{B^m(y)} \left(|(u)_y|^2 + |v|^2\right) f(\de y, \de v) \dd x \\
	& \lesssim m^{-3} \left(\int_{\R^3} \rho(y)|(u)_y|^2 \dd y \dd x + \int_{\R^3\times\R^3}|v|^2 f(\de y, \de v)\right) \\
	& \lesssim m^{-3}.
\end{align}
	
This finishes the estimate of $I_{2,2,2}$. Therefore, the estimate of $I_{2,2}$ is complete, which also finishes the estimate of $I_2$.
	
\bigskip	
\subsubsection*{Step 3: Estimate of $\mathbf{I_3}$.}

We recall from \eqref{eq:I_3} that $I_3$ consists of three terms, which we denote by $J_1, J_2$ and $J_3$.
We will focus on the proof on $J_1$ as this is the most difficult term. We will comment on the adjustments
needed to treat $J_2$ and $J_3$ along the estimates for $J_1$. Roughly speaking, the main difference between $J_1$ and $J_2$ is that one considers $L^2(\cup_i B_i)$ for $J_1$ and $L^2_\loc(\R^3)$ for $J_2$. Naively, $J_1$ should therefore be better by a factor $|\cup_i B_i| \sim m^{-2}$,
which is exactly the estimate we obtain. Moreover, $J_3$ concerns the gradient of the terms in $J_1$. Since we may loose a factor $m^{-2}$ going
from $J_1$ to $J_3$, it will not be difficult to adapt the estimates for $J_1$ to $J_3$ using the  gradient estimates in Section \ref{sec:Auxiliary}. For the sake of completeness we detail the estimates for $J_3$ in the appendix.

\subsubsection*{Step 3.1: Expansion of the terms}

As in the previous step, we first want to replace all occurrences of $G^m$  by $G$. Note that $G^m$ is present both explicitly in the definition of $\Xi^m$ and also implicitly through $\xi_m$.
By \eqref{eq:prop.G^m_lambda} and independence of the position of the particles, it holds
\begin{align} \label{eq:J_1.0}
\begin{aligned}
	&  m^2 \E_m\left[ \1_{\Ocal_m}  \|\Xi_m\|^2_{L^2 (\cup_i B_i)}\right] \\
	& \leq m^2 \E_m\left[\1_{\Ocal_m}\sum_{i=1}^{m}\int_{B_i} \left| G (\rho m^{-\frac{1}{2}} \xi_m) - G\left[\sum_{j \neq i} \frac{m^{-\frac{1}{2}} (\xi_m)_j}{m} \delta^m_j \right]\right|^2 \dd x \right] \\
	& = m^3\E_m\left[\1_{\Ocal_m}\int_{B_i} \left| G (\rho m^{-\frac{1}{2}} \xi_m) - G\left[\sum_{j \neq i} \frac{m^{-\frac{1}{2}} (\xi_m)_j}{m} \delta^m_j \right]\right|^2 \dd x \right] \\
	& \lesssim m^3 \E_m\left[ \int_{B_i}\left|G(\rho m^{-1/2}(\xi_m-\sigma_m))\right|^2\de x \right]\\
	& + m^3 \E_m\left[\1_{\Ocal_m}\int_{B_i} \left| G (V m^{-\frac{1}{2}}\sigma_m) - G\left[\sum_{j \neq i} \frac{m^{-\frac{1}{2}} (\xi_m)_j}{m} \delta^m_j \right]\right|^2 \dd x \right],
\end{aligned}
\end{align}
where on the right-hand side, $i$ is any of the $m$ identically distributed particles.
We use that \(G\rho\) is a bounded operator from \(L^2(K)\) to \(L^{\infty}(B_i)\) and Lemma \ref{lem:xi.rho} to deduce
\begin{align}
	m^3 \E_m\left[ \int_{B_i}\left|G(\rho m^{-1/2}(\xi_m-\sigma_m))\right|^2 \right] & \lesssim  \E_m\left[ \norm{G(\rho m^{-1/2}(\xi_m-\sigma_m))}_{L^{\infty}(B_i)}^2 \right] \\
	& \lesssim   m \E_m\left[ \norm{m^{-1/2}(\xi_m-\sigma_m)}_{L^{2}(K)}^2 \right]  \\
	& \lesssim m^{-2}.
\end{align}
This implies, that for the estimate of $J_1$, it suffices to show that
\begin{align}
	\mathfrak{J}_1 := \E_m\left[\1_{\Ocal_m}\int_{B_i} \left| G (\rho m^{-\frac{1}{2}}\sigma_m) - G\left[\sum_{j \neq i} \frac{m^{-\frac{1}{2}} (\xi_m)_j}{m} \delta^m_j \right]\right|^2 \dd x \right] \lesssim m^{-5}.
\end{align}

By the definitions of \(m^{-\frac{1}{2}}\xi_m\) and \(m^{-\frac{1}{2}}\rho_m\) (cf. \eqref{def:xi} and \eqref{def:sigma_m}) together with \eqref{eq:prop.G^m_lambda}, we have in $\Ocal_m$
	\begin{align}
	G (\rho m^{-\frac{1}{2}}\sigma_m) - G\left[\sum_{j \neq i} \frac{m^{-\frac{1}{2}} (\xi_m)_j}{m} \delta^m_j \right] = \frac 1  m \sum_{k=1}^m \sum_{j=1}^m \Psi_{jk}, \\ 
	\label{eq:def.Psi}
	\Psi_{j,k}(x)  := G\left[\rho A \left(F - W_k\delta^m_k\right) \right]  - (1-\delta_{ij})G\left[ \left(A\left(F - (1-\delta_{jk})W_k\delta^m_k\right) \right)_j \delta^m_j  \right].
	\end{align}
	(Strictly speaking  $\Psi_{j,k}$ depends on $i$, but we omit this dependence for the ease of notation.)

Thus,	
% and \(m^{-\frac{1}{2}}\rho_m\) from \eqref{def:sigma_m}
% 	\[m^{-\frac{1}{2}}\sigma_m = \frac{1}{m}m^{-\frac{1}{2}}\sum_{j=1}^{m} \left(AF - A[W_j\delta_j^m] \right).  \]
% We use again \eqref{eq:prop.G^m_lambda} to replace $G^m$ by $G$, and obtain
\begin{align}
	\mathfrak{J}_1 
% 	& \leq m^3\E_m \left[ \int_{B_i} \left( \frac{1}{m^2} \sum_{j=1}^{m} \sum_{k=1}^{m} G\left[\rho A \left(F - W_k\delta^m_k\right) \right] \right. \right. \\
% 	& \qquad \left. \vphantom{\sum_{i=1}^{m}} - (1-\delta_{ij})G\left[ \left(A\left(F - (1-\delta_{jk})W_k\delta^m_k\right) \right)_j \delta^m_j  \right] \right) \\
% 	& \left( \frac{1}{m^2} \sum_{n=1}^{m} \sum_{\ell=1}^{m} G\left[\rho A \left(F - W_{\ell}\delta^m_{\ell}\right) \right] \right. \\
% 	& \qquad \left.\left. \vphantom{\sum_{i=1}^{m}} - (1-\delta_{in})G\left[ \left(A\left(F - (1-\delta_{n\ell})W_{\ell}\delta^m_{\ell}\right) \right)_n \delta^m_n  \right] \right)\right] \\
	& \leq m^{-1} \sum_{j=1}^{m} \sum_{k=1}^{m} \sum_{n=1}^{m} \sum_{\ell=1}^{m} I_3^{i,j,k,n,\ell}, \\
	I_3^{i,j,k,n,\ell} &:= \E_m\left[\int_{B_i} \Psi_{j,k}(x) \Psi_{n,\ell}(x) \dd x\right].
\end{align}
% We denote
% 	\[I_3^{i,j,k,n,\ell} = \E_m\left[\int_{B_i} \Psi_{j,k}(x) \Psi_{n,\ell}(x) \dd x\right]\]
% with
% \begin{align} \label{eq:def.Psi}
% 	\Psi_{j,k}(x) & = G\left[\rho A \left(F - W_k\delta^m_k\right) \right]  - (1-\delta_{ij})G\left[ \left(A\left(F - (1-\delta_{jk})W_k\delta^m_k\right) \right)_j \delta^m_j  \right].
% \end{align}
Similarly, we have the estimate
\begin{align}
	J_3 &\lesssim \E_m\left[\int_{\cup_iB_i}\left|\nabla G(\rho m^{-\frac{1}{2}}(\xi_m-\sigma_m))\right|^2\right] + \mathfrak{J}_3 \lesssim m^{-2} + \mathfrak{J}_3, \\
	\mathfrak{J}_3 &:= m^{-3} \sum_{j=1}^{m} \sum_{k=1}^{m} \sum_{n=1}^{m} \sum_{\ell=1}^{m} \E_m\left[\int_{B_i} \nabla \Psi_{j,k}(x) \nabla \Psi_{n,\ell}(x) \dd x\right], \label{def:frak.J_2}
\end{align}
with the same proof as before using that \(\nabla G\rho\) is a bounded operator from \(\dot{H}^1(\R^3)\) to \(W^{1,\infty}(\R^3)\) and the second part of Lemma \ref{lem:xi.rho}. 

Furthermore,
\begin{align}
J_2 &\lesssim \E_m\left[\left\| G(\rho m^{-\frac{1}{2}}(\xi_m-\sigma_m)) \right\|_{L^2(K')}^2\right] + \mathfrak{J}_2 \lesssim m^{-2} + \mathfrak{J}_2, \\
	\mathfrak{J}_2 &:= m^{-4} \sum_{j=1}^{m} \sum_{k=1}^{m} \sum_{n=1}^{m} \sum_{\ell=1}^{m} \int_{K'}  \E_m\left[\tilde \Psi_{j,k}(x) \tilde \Psi_{n,\ell}(x)\right] \dd x,
\end{align}
where $\tilde \Psi_{j,k}$ denotes the function that is obtained by omitting the factor $(1-\delta_{ij})$ in \eqref{eq:def.Psi}.

\medskip
Relying on this structure enables us to make more precise the argument why the estimate for $\J_1$ is most difficult compared to $\J_2$ and $\J_3$.
Indeed, for the estimate for $\J_3$, one just follows the same argument as for $\J_1$. The relevant estimates in Section \ref{sec:Auxiliary}
show that whenever $\nabla G$ instead of $G$ appears, we loose (at most) a factor $m^{-1}$. For completeness, we provide the proof of the estimates regarding $\J_3$ in the appendix.

On the other hand, for $\J_2$, we can use the estimates that we will prove for the terms of $\J_1$ in the case when the index $i$ is different from all the other indices.
Indeed, in those cases, $\Psi_{j,k} = \tilde \Psi_{j,k}$, and we will always estimate
\begin{align}
	|I_3^{i,j,k,n,\ell}| &=  \left| m^{-3}\int_{\R^3} (\rho)_x \E_m\left[ \Psi_{j,k}  \Psi_{n,\ell}\right]  \dd x \right| \lesssim m^{-3} \left\|\E_m\left[  \Psi_{j,k}(x) \Psi_{n,\ell}(x) \right]  \right\|_{L^1_\loc(\R^3)}.
\end{align}
Thus, the bound for $\J_2$ is a direct consequence of the estimates we will derive to bound $\J_1$.

\medskip

Recall that we need to prove \(|\J_1|\lesssim m^{-2}\).
We will split the sum into the cases \(\#\{i,j,k,n,\ell\}=\alpha\), \(\alpha=1,\ldots,5\). Then, since \(i\) is fixed, there will be \(m^{\alpha-1}\) summands for the case \(\#\{i,j,k,n,\ell\}=\alpha\). Thus, it is enough to show that in each of these cases
	\[|I_3^{i,j,k,n,\ell}| \lesssim m^{-\alpha},\quad\alpha = \#\{i,j,k,n,\ell\}.\]
	
To prove this estimate, we have to rely on cancellations between the terms that $\Psi_{j,k}$ is composed of. To this end, we denote the first part of \(\Psi_{j,k}\) by
\begin{align}
	\Psi_{k}^{(1)} := \Psi^{(1,1)} + \Psi_{k}^{(1,2)} :=G\left[\rho A F - \rho A[W_k\delta_k^m] \right],
\end{align}
and the second part by
\begin{align}
	\Psi_{j,k}^{(2)} := \Psi_{j}^{(2,1)} + \Psi_{j,k}^{(2,2)}:= (1-\delta_{ij})G\left[ \left(A\left(F - (1-\delta_{jk})W_k\delta^m_k\right) \right)_j \delta^m_j  \right].
\end{align}

We observe that
\begin{equation}
\label{eq:exp.VAVu}
\begin{aligned}
\E_m[\Psi^{(1,1)}] &= G  \rho  A F, \\
\E_m[\Psi^{(1,2)}_k] &= G  \rho  A \Fcal, \\
\E_m[\Psi^{(2,1)}_j] &= (1- \delta_{ij}) G  \Rcal  A F, \\
\E_m[\Psi^{(2,2)}_{j,k}] &= (1- \delta_{ij})(1-\delta_{jk}) G  \Rcal  A  \Fcal.
\end{aligned}
\end{equation}

\bigskip

\subsubsection*{Step 3.2: The cases in which at most $\mathbf{2}$ indices are equal}

In many cases, we can rely on cancellations within $\Psi_{k}^{(1)}$ and $\Psi_{j,k}^{(2)}$.
Indeed, we will prove the following lemma:
\begin{lem} \label{lem:cancellations.not.cross}
	Let $K' \subset \R^3$ be bounded. Then, 
	\begin{align}
		\left\|\E_m[\Psi_{k}^{(1)}] \right\|_{L^2(K')} &\lesssim m^{-1}, \label{eq:Psi^1} \\
		\left\|\E_m[\Psi_{j,k}^{(2)}] \right\|_{L^2(K')} &\lesssim m^{-1} \quad \text{if } j \neq k \label{eq:Psi^2}.
	\end{align}
\end{lem}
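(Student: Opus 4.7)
The plan is to compute the expectations explicitly using \eqref{eq:exp.VAVu}, observe a cancellation producing the factor $V-\V$, and then invoke the key estimate \eqref{eq:V-V.H^-1} which provides the required $m^{-1}$ gain.

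First I would use the decomposition $\Psi_k^{(1)} = \Psi^{(1,1)} + \Psi_k^{(1,2)}$ together with the explicit expectations in \eqref{eq:exp.VAVu} to get
\begin{align}
	\E_m[\Psi_k^{(1)}] = GVAVu - GVA\V u = GVA(V-\V)u.
\end{align}
Since $A\colon \dot H^{-1}(\R^3) \to \dot H^1(\R^3)$ is bounded, multiplication by $V$ (compactly supported and in $L^\infty$) sends $\dot H^1$ into $\dot H^{-1}$, and $G\colon \dot H^{-1}\to\dot H^1\hookrightarrow L^2_\loc(\R^3)$, the composition $GVA$ is bounded from $\dot H^{-1}(\R^3)$ to $L^2(K')$. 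Combining this with \eqref{eq:V-V.H^-1} gives
\begin{align}
	\|\E_m[\Psi_k^{(1)}]\|_{L^2(K')} \lesssim \|(V-\V)u\|_{\dot H^{-1}(\R^3)} \lesssim m^{-1}\|u\|_{L^2(K)},
\end{align}
which yields \eqref{eq:Psi^1}.

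For \eqref{eq:Psi^2}, I would first dispose of the trivial case $i=j$, in which $\Psi_{j,k}^{(2)}\equiv 0$ by the factor $(1-\delta_{ij})$. When $i\neq j$ and $j\neq k$, using the last two lines of \eqref{eq:exp.VAVu} I obtain the analogous cancellation
\begin{align}
	\E_m[\Psi_{j,k}^{(2)}] = G\V AVu - G\V A\V u = G\V A(V-\V)u.
\end{align}
By Lemma \ref{lem:auxiliary.averages}, $\V$ is bounded from $L^2(K)$ to $\dot H^{-1}(\R^3)$, while $A\colon \dot H^{-1}\to \dot H^1\hookrightarrow L^2(K)$. Thus $G\V A$ is bounded from $\dot H^{-1}(\R^3)$ to $L^2(K')$, and applying \eqref{eq:V-V.H^-1} once more gives $\|\E_m[\Psi_{j,k}^{(2)}]\|_{L^2(K')}\lesssim m^{-1}\|u\|_{L^2(K)}$.

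The only nontrivial point is choosing the right functional setting so that the operators $GVA$ and $G\V A$ are bounded from $\dot H^{-1}(\R^3)$ into $L^2(K')$; once this is fixed, the $m^{-1}$ gain comes for free from \eqref{eq:V-V.H^-1}. I expect no serious obstacle beyond bookkeeping of these mapping properties, all of which are collected in Section \ref{sec:MoreNotation} and Lemma \ref{lem:auxiliary.averages}.
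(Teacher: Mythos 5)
Your proposal is correct and follows essentially the same route as the paper: the identical cancellation $\E_m[\Psi_k^{(1)}]=GVA(V-\V)u$ and $\E_m[\Psi_{j,k}^{(2)}]=G\V A(V-\V)u$ (with the trivial case $i=j$ handled as you do), followed by the $m^{-1}$ gain from Lemma \ref{lem:auxiliary.averages}. The only (harmless) difference is that you measure $(V-\V)u$ in $\dot H^{-1}(\R^3)$ via \eqref{eq:V-V.H^-1} and use boundedness of $GVA$, $G\V A$ from $\dot H^{-1}$ into $L^2(K')$, whereas the paper measures it in $L^2(\R^3)$ via \eqref{eq:V-V.L^2} using $u\in\dot H^1(\R^3)$; both choices are legitimate and yield the same bound.
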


There are only three cases (up to symmetry), where we have to rely on cancellations between $\Psi_{k}^{(1)}$ and $\Psi_{j,k}^{(2)}$ to estimate $I_3^{i,j,k,n,\ell}$.
These are the cross terms, when either $j=n$, or $k= \ell$, or $j=\ell$, and all the other indices are different.
In these cases, we will rely on the following lemma:
\begin{lem} \label{lem:cancellations.cross}
	Let $K' \subset \R^3$ be bounded. Then,
\begin{align}
		\label{eq:jn} 
			\|\E_m\left[\Psi_{j,k} \Psi_{j,\ell}\right] \|_{L^1(K')}  \lesssim m^{-2} \quad \text{if } \#\{i,j,k,\ell\} = 4, \\
		\label{eq:kl} 
			\|\E_m\left[\Psi_{j,k} \Psi_{n,k}\right]\|_{L^1(K')}  \lesssim m^{-2} \quad \text{if } \#\{i,j,k,n\} = 4, \\
		\label{eq:jl} 
			\|\E_m\left[\Psi_{j,k} \Psi_{n,j}\right]\|_{L^1(K')} \lesssim m^{-2} \quad \text{if } \#\{i,j,k,n\} = 4.
	\end{align}
\end{lem}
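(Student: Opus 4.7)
The strategy for all three estimates is the same. We condition on the shared random variable (which is $w_j$ for \eqref{eq:jn}, $w_k$ for \eqref{eq:kl}, and $w_j = w_\ell$ for \eqref{eq:jl}), use independence of the remaining positions to factor the conditional expectation of the product as a product of conditional expectations, and then show that each factor has $L^2(K')$-norm of order $m^{-1}$ pointwise in the conditioned variable. Cauchy--Schwarz on $(\Omega, \P_m)$ then yields $\|\E_m[\Psi_{j,k}\Psi_{n,\ell}]\|_{L^1(K')} \lesssim m^{-2}$.

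The $m^{-1}$-bound on a single conditional expectation comes from combining the four pieces of $\Psi$ so as to extract a factor of $V - \V$, at which point one invokes the cancellation \eqref{eq:V-V.H^-1} of Lemma \ref{lem:auxiliary.averages}. For \eqref{eq:jn}, the only $w_k$-dependent pieces of $\Psi_{j,k}$ are $\Psi^{(1,2)}_k$ and $\Psi^{(2,2)}_{j,k}$; averaging over $w_k$ replaces $(u)_k \delta^m_k$ by $\V u$, and since all Kronecker-$\delta$ prefactors in \eqref{eq:def.Psi} equal one under the standing distinctness assumption, the four terms regroup as
\[
	\E_m[\Psi_{j,k}\mid w_j] \;=\; G V A\bigl((V-\V)u\bigr) \;-\; G\bigl[\bigl(A(V-\V)u\bigr)_j\, \delta^m_j\bigr].
\]
Each summand is $O(m^{-1})$ in $L^2(K')$ uniformly in $w_j$: for the first, \eqref{eq:V-V.H^-1} combined with the boundedness of $GVA\colon \dot H^{-1}(\R^3) \to L^2_\loc(\R^3)$ gives $\|GVA(V-\V)u\|_{L^2(K')} \lesssim m^{-1}$; for the second, $(V-\V)u$ is compactly supported and lies in $L^2 \cap \dot H^{-1}$ with norm of order $m^{-1}$ (using both \eqref{eq:V-V.L^2} and \eqref{eq:V-V.H^-1}), so that $\|A(V-\V)u\|_{L^\infty} \lesssim m^{-1}$ by the mapping $A\colon L^2 \cap \dot H^{-1} \to C^{0,\alpha}$, and then \eqref{eq:G.delta.L^2_loc} controls $\|G\delta^m_j\|_{L^2(K')}$ uniformly in $w_j$.

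The estimate \eqref{eq:kl} is proved in the same way after conditioning on $w_k$: averaging the $w_j$-dependent pieces $\Psi^{(2,1)}_j$ and $\Psi^{(2,2)}_{j,k}$ produces $\E_m[\Psi_{j,k}\mid w_k] = G(V-\V) A(Vu - (u)_k\delta^m_k)$, whose $L^2(K')$-norm is $\lesssim m^{-1}$ because $\|A[(u)_k\delta^m_k]\|_{L^2(K)}$ is bounded uniformly in $w_k$ by \eqref{eq:G.delta.L^2_loc}. For \eqref{eq:jl} we condition on $w_j = w_\ell$; the factor $\E_m[\Psi_{j,k}\mid w_j]$ is computed exactly as in \eqref{eq:jn}, while averaging over $w_n$ inside $\Psi_{n,j}$ yields $\E_m[\Psi_{n,j}\mid w_j] = G(V-\V) A(Vu - (u)_j\delta^m_j)$, which is $O(m^{-1})$ in $L^2(K')$ by the same argument as for \eqref{eq:kl}.

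The main obstacle will be bookkeeping the sixteen cross-products produced by expanding $\Psi_{j,k}\Psi_{n,\ell}$, together with the indicators $(1-\delta_{ij})$, $(1-\delta_{jk})$ attached to the $\Psi^{(2)}$-pieces --- all equal to one in the three configurations considered here, but still to be tracked carefully. The cancellation is genuine: neither $\Psi^{(1)}$ nor $\Psi^{(2)}$ alone gains a factor $m^{-1}$ under a single conditional expectation, so the algebraic pairing between first-part and second-part terms must be maintained throughout.
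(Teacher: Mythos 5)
Your argument is correct and is essentially the paper's own proof: conditioning on the shared position and integrating out the remaining indices produces exactly the paper's integrands, e.g. $\int V(y)\bigl(G V A(V-\V)u-(A(V-\V)u)_y\,G\delta^m_y\bigr)^2\dd y$ for \eqref{eq:jn} and $\int V(z)\bigl(G(V-\V)AVu-(u)_z\,G(V-\V)A\delta^m_z\bigr)^2\dd z$ for \eqref{eq:kl}, the gain of $m^{-1}$ per factor comes from the same cancellation \eqref{eq:V-V.L^2}/\eqref{eq:V-V.H^-1} combined with \eqref{eq:G.delta.L^2_loc}, and your Cauchy--Schwarz step plays the role of the paper's Young inequality in the mixed case \eqref{eq:jl}. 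The one imprecision is the claim in case \eqref{eq:kl} that $\|A[(u)_k\delta^m_k]\|_{L^2(K)}$ is bounded uniformly in $w_k$: it carries the factor $|(u)_{w_k}|$, which for $u\in\dot H^1(\R^3)$ need not be bounded, but it is square-integrable against $V$ by \eqref{eq:V()}, so the Cauchy--Schwarz over the probability space that you already invoke (rather than a uniform pointwise bound) closes the estimate exactly as in the paper.
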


Finally, we obtain the following estimates, useful in particular for the cases in which $i=k$.

\begin{lem} \label{lem:bounds.not.cross}
Let $K' \subset \R^3$ be bounded. Then, for any $i,j,k$,
	\begin{align}
		&\left\|\E_m[\Psi^{(1,1)}] \right\|_{L^2(K')} 
		+ \left\|\E_m[\Psi_{k}^{(1,2)}] \right\|_{L^2(K')}+ \left\|\E_m[\Psi_{j}^{(2,1)}] \right\|_{L^2(K')} 
		+ \left\|\E_m[\Psi_{j,k}^{(2,2)}] \right\|_{L^2(K')} \lesssim 1. \label{eq:bounds.separate.no.i}\\	
		&\left\|\E_m[\1_{B^m_i}\Psi^{(1,1)}] \right\|_{L^2(\R^3)} 
		+ \left\|\E_m[\1_{B^m_i}\Psi_{k}^{(1,2)}] \right\|_{L^2(\R^3)}+ \left\|\E_m[\1_{B^m_i}\Psi_{j}^{(2,1)}] \right\|_{L^2(\R^3)} 
		+ \left\|\E_m[\1_{B^m_i}\Psi_{k,j}^{(2,2)}] \right\|_{L^2(\R^3)} \\
		&\lesssim m^{-3}. \label{eq:bounds.separate.with.i}\\
	\end{align}
\end{lem}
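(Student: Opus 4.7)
The proof of \eqref{eq:bounds.separate.no.i} is a direct consequence of the explicit expectations \eqref{eq:exp.VAVu}. Up to factors $(1-\delta_{ij})$, $(1-\delta_{jk}) \in \{0,1\}$, each of the four expectations reduces to one of $GVAVu$, $GVA\V u$, $G\V AVu$, $G\V A\V u$. My plan is to chain the following uniform-in-$m$ mapping properties: both $V$ and $\V$ send $L^2(K)$ to $L^2(\R^3) \cap \dot H^{-1}(\R^3)$ (Lemma \ref{lem:auxiliary.averages}); $A$ sends $\dot H^{-1}(\R^3)$ to $\dot H^1(\R^3)$; and multiplication by $V$ followed by $G$ maps $\dot H^1(\R^3)$ into $L^2_\loc(\R^3)$. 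This immediately yields uniform bounds for all four functions in $L^2(K')$, with constants depending only on $f$, $V$ and $K'$.

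The proof of \eqref{eq:bounds.separate.with.i} hinges on the elementary identity
\[
\E_m[\1_{B^m_i}(x)] = \int_{\R^3} V(y)\1_{B^m(y)}(x)\dd y = |B^m|(V)_x \lesssim m^{-3}(V)_x,
\]
together with the fact that $(V)_\cdot$ is bounded and supported in $K$. Whenever $\1_{B^m_i}$ is independent of the random function under consideration, the expectation factorises and combining the display above with the $L^2(K)$-bounds just established produces the desired $O(m^{-3})$ bound in $L^2(\R^3)$. This handles $\Psi^{(1,1)}$ (deterministic), $\Psi^{(2,1)}_j$ (the factor $(1-\delta_{ij})$ forces $j\neq i$), $\Psi^{(1,2)}_k$ when $k\neq i$, and $\Psi^{(2,2)}_{j,k}$ when $k\neq i$ (here both $j\neq i$ and $k\neq i$).

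The only cases requiring a direct computation are $\Psi^{(1,2)}_i$ and $\Psi^{(2,2)}_{j,i}$ with $j\neq i$. For the first, write
\[
\E_m[\1_{B^m_i}(x)\Psi^{(1,2)}_i(x)] = -\int V(y)\1_{B^m(y)}(x)\,h_y(x)\dd y, \qquad h_y := GVA[(u)_y\delta^m_y].
\]
For the second, taking the expectation in $w_j$ first collapses the sum to a single application of $\V$ and leads to the analogous formula with $h_y$ replaced by $F_y := G\V A[(u)_y\delta^m_y]$. I would then apply Cauchy--Schwarz in $y$ against the weight $V\1_{B^m(\cdot)}$ (which contributes $m^{-3}(V)_x$), integrate in $x$ via Fubini, and use the uniform bounds $\|h_y\|_{L^\infty(\R^3)},\|F_y\|_{L^\infty(\R^3)} \lesssim |(u)_y|$. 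These $L^\infty$-bounds follow from the Sobolev embedding $H^2(\R^3)\hookrightarrow L^\infty(\R^3)$ applied to the compactly supported functions $VA[(u)_y\delta^m_y]$ and $\V A[(u)_y\delta^m_y]$, whose $L^2$-norms are $\lesssim |(u)_y|$ by Lemmas \ref{lem:auxiliary.averages} and \ref{lem:Gdelta}. Assembling these estimates yields $\|\cdot\|_{L^2(\R^3)}^2 \lesssim m^{-6}\|u\|_{L^2(K)}^2$.

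The main technical obstacle is the final case $\Psi^{(2,2)}_{j,i}$: there both $\1_{B^m_i}$ and the innermost $A[(u)_i\delta^m_i]$ depend on $w_i$, and some care is needed to integrate out $w_j$ first so that the remaining correlated $w_i$-dependence reduces to a single Cauchy--Schwarz together with the uniform $L^\infty$-bound on $F_y$.
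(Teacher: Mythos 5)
Your proof is correct and follows essentially the paper's argument: \eqref{eq:bounds.separate.no.i} via the explicit expectations \eqref{eq:exp.VAVu} and the $\dot H^1\to L^2_\loc$ boundedness of $GVAV$, $GVA\V$, $G\V AV$, $G\V A\V$; and \eqref{eq:bounds.separate.with.i} by factorizing the expectation whenever $k\neq i$ (the terms with index $j$ vanishing for $i=j$), while in the correlated cases $k=i$ you arrive at the same representations $\int V(y)\1_{B^m(y)}\,GVA[(u)_y\delta^m_y]\dd y$ and $\int V(y)\1_{B^m(y)}\,G\V A[(u)_y\delta^m_y]\dd y$ and use the uniform $L^\infty$-bounds on $GVA\delta^m_y$ and $G\V A\delta^m_y$ together with the $m^{-3}$ volume factor; your Cauchy--Schwarz in $y$ is an equivalent substitute for pulling the supremum over $y$ out of the integral as the paper does. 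The only imprecision is the justification of these $L^\infty$-bounds via the embedding $H^2(\R^3)\hookrightarrow L^\infty(\R^3)$: for a compactly supported $w\in L^2$ with nonzero mean, $Gw$ decays only like $|x|^{-1}$, so it is not in $L^2(\R^3)$ and hence not in $H^2(\R^3)$. The bound you need is nevertheless immediate (and is what the paper invokes) from $g\in L^2_\loc(\R^3)$ together with the support of $VA[(u)_y\delta^m_y]$ and $\V A[(u)_y\delta^m_y]$ being contained in the fixed compact set $K$, i.e.\ from the stated boundedness of $GV$ and $G\V$ from $L^2(K)$ to $L^\infty(\R^3)$ combined with \eqref{eq:G.delta.L^2_loc}.
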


Combining these lemmas allows us to estimate $I_3^{i,j,k,n,\ell}$ in all the cases when \(\alpha = \#\{i,j,k,n,\ell\} \geq 4\).

\begin{cor}
The following estimates hold true where the implicit constants are independent of \(m\):
\begin{enumerate}
	\item If \(\#\{i,j,k,n,\ell\} = 5\), then
		\[|I_3^{i,j,k,n,\ell}| \lesssim m^{-5}.\]
	\item If \(\#\{i,j,k,n,\ell\} = 4\), then
			\begin{align}
				|I_3^{i,j,k,n,\ell}| \lesssim m^{-4}. \label{eq:alpha=4}
			\end{align}
\end{enumerate}
\end{cor}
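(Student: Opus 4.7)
The plan is to treat both bounds by independence, factoring $\E_m[\1_{B_i}(x)\Psi_{j,k}(x)\Psi_{n,\ell}(x)]$ into a product of expectations over the mutually independent blocks of indices, and then invoking Lemmas~\ref{lem:cancellations.not.cross}, \ref{lem:cancellations.cross} and \ref{lem:bounds.not.cross}. The uniform pointwise bound
\begin{align}
\E_m[\1_{B_i}(x)] = \int_{\R^3} V(y)\1_{|x-y|\leq R_m}\dd y \lesssim m^{-3}
\end{align}
will supply the geometric factor whenever $\1_{B_i}$ can be separated from the $\Psi$'s. All $L^p$-norms below are taken on (or supported in) the bounded set $K$ of \eqref{eq:def.K}.

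When $\alpha=5$, the five positions $w_i,(w_j,w_k),(w_n,w_\ell)$ are mutually independent, so
\begin{align}
I_3^{i,j,k,n,\ell} = \int_{\R^3}\E_m[\1_{B_i}(x)]\,\E_m[\Psi_{j,k}(x)]\,\E_m[\Psi_{n,\ell}(x)]\dd x.
\end{align}
Since $j\neq k$ and $n\neq\ell$, Lemma~\ref{lem:cancellations.not.cross} gives $\|\E_m[\Psi_{j,k}]\|_{L^2(K)},\|\E_m[\Psi_{n,\ell}]\|_{L^2(K)}\lesssim m^{-1}$; combining with the $L^\infty$-bound on $\E_m[\1_{B_i}]$ and Cauchy--Schwarz yields $|I_3^{i,j,k,n,\ell}|\lesssim m^{-3}\cdot m^{-1}\cdot m^{-1}=m^{-5}$.

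For $\alpha=4$ we enumerate the sub-cases up to the symmetry $(j,k)\leftrightarrow(n,\ell)$. If $i=j$, the Kronecker factor $(1-\delta_{ij})$ annihilates $\Psi^{(2)}_{j,k}$ so that $\Psi_{j,k}=\Psi^{(1)}_k$, which still obeys $\|\E_m[\Psi^{(1)}_k]\|_{L^2(K)}\lesssim m^{-1}$; the three remaining indices are independent of $i$, and the $\alpha=5$-type factorization gives $m^{-5}$. If $j=k$, the factor $(1-\delta_{jk})$ kills $\Psi^{(2,2)}_{j,j}$ but not $\Psi^{(2,1)}_j$, so the cancellation of Lemma~\ref{lem:cancellations.not.cross} is lost; we use the crude bound $\|\E_m[\Psi_{j,j}]\|_{L^2(K)}\lesssim 1$ from Lemma~\ref{lem:bounds.not.cross}, and factorization yields $m^{-3}\cdot 1\cdot m^{-1}=m^{-4}$.

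The sub-cases in which an index is shared between $\1_{B_i}$ and a $\Psi$-factor, or between the two $\Psi$-factors, are more delicate. When $i=k$, the random variable $w_i$ appears simultaneously in $\1_{B_i}$ and $\Psi_{j,i}$, so we invoke the second bound of Lemma~\ref{lem:bounds.not.cross} to obtain $\|\E_m[\1_{B_i}\Psi_{j,i}]\|_{L^2(\R^3)}\lesssim m^{-3}$; combining with $\|\E_m[\Psi_{n,\ell}]\|_{L^2(K)}\lesssim m^{-1}$ from Lemma~\ref{lem:cancellations.not.cross} via Cauchy--Schwarz gives the desired $m^{-4}$. The cross coincidences $j=n$, $j=\ell$ and $k=\ell$ (together with $k=n$, which is symmetric to $j=\ell$) are precisely those covered by Lemma~\ref{lem:cancellations.cross}; in each, $\1_{B_i}$ is independent of the $\Psi$-product, so
\begin{align}
|I_3^{i,j,k,n,\ell}|\leq \|\E_m[\1_{B_i}]\|_{L^\infty(\R^3)}\,\|\E_m[\Psi_{j,k}\Psi_{n,\ell}]\|_{L^1(K)}\lesssim m^{-3}\cdot m^{-2} = m^{-5}.
\end{align}
The main obstacle is organizational: disentangling the ten sub-cases and pairing each with the right combination of independence factorization and cancellation; the genuine analytic content has already been isolated in Lemmas~\ref{lem:cancellations.not.cross}, \ref{lem:cancellations.cross} and \ref{lem:bounds.not.cross}.
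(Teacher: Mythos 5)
Your proposal is correct and follows essentially the same route as the paper: factor the expectation along independent blocks of indices, use the $L^\infty$ bound $\E_m[\1_{B_i}]\lesssim m^{-3}$, and invoke Lemmas \ref{lem:cancellations.not.cross}, \ref{lem:cancellations.cross} and \ref{lem:bounds.not.cross} in exactly the same sub-cases (cross coincidences $j=n$, $k=\ell$, $j=\ell$ via Lemma \ref{lem:cancellations.cross}; $j=k$ via the crude bound \eqref{eq:bounds.separate.no.i}; coincidences with $i$ via \eqref{eq:bounds.separate.with.i}). The only cosmetic difference is that you treat $i=j$ as a separate sub-case, which the paper absorbs into the ``remaining cases'' handled by \eqref{eq:bounds.separate.with.i}.
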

\begin{proof}
	If \(\#\{i,j,k,n,\ell\} = 5\), then by independence, the Hölder inequality
	and Lemma \ref{lem:cancellations.not.cross}
	\begin{align*}
		\abs{I_3^{i,j,k,n,\ell}} 
%		& \leq \|\E_m\left[\textbf{1}_{B_{\frac{1}{m}}(w_i)} \Psi_{j,k}\Psi_{n,\ell} \right]\|_{L^1} \\
%		& = \| \E_m\left[\textbf{1}_{B_{\frac{1}{m}}(w_i)}\right] \E_m\left[\Psi_{j,k}\right] \E_m\left[\Psi_{n,\ell} \right] \|_{L^1}\\
		& \leq \norm{\E_m\left[\textbf{1}_{B_{\frac{1}{m}}(w_i)}\right]}_{L^{\infty}(\R^3)} \norm{\E_m\left[\Psi_{j,k}\right]}_{L^2(K)} \norm{\E_m\left[\Psi_{n,\ell} \right]}_{L^2(K)} \\
%				& \leq \norm{\E_m\left[\textbf{1}_{B_{\frac{1}{m}}(w_i)}\right]}_{L^{\infty}} 
%				\left(\norm{\E_m\left[\Psi_{k}^{(1)}\right]}_{L^2(K)} + \norm{\E_m\left[\Psi_{j,k}^{(2)}\right]}_{L^2(K)}  \right)
%				\left(\norm{\E_m\left[\Psi_{\ell}^{(1)}\right]}_{L^2(K)} + \norm{\E_m\left[\Psi_{n,\ell}^{(2)}\right) \right]}_{L^2(K)} \\
		& \lesssim m^{-3}m^{-1}m^{-1} =m^{-5}.
	\end{align*}

	If \(\#\{i,j,k,n,\ell\} = 4\), we need to distinguish all the possible combinations of two indices being equal.
%		A special case occurs for \(i=j\) (or \(i=\ell\) analogously): in this case $\Psi_{i,k}^{(2)} = 0$ by definition and $\Psi_{k}^{(1)}$ does not depend on $i = j$. Thus, we can just apply the same estimates as above.
	Depending on which indices coincide, we split the product by independence of the other indices. If $j=n$, $k=\ell$ or $j= \ell$ (or $k=n$ which is the same), we rely on Lemma \ref{lem:cancellations.cross} and gain an additional factor $m^{-3}$ from the expectation of $\1_{B^m_i}$.
	
	If $j=k$ (or analogously $n= \ell$), the expectation completely factorizes into 
	$\E_m[\1_{B^m_i}] \E_m[ \Psi_{jj}] \E_m[\Psi_{n\ell}]$ and we can apply
	\eqref{eq:bounds.separate.no.i} for the second factor and Lemma \ref{lem:cancellations.not.cross} for the third factor.
	
	Finally, in all the other cases we can, without loss of generality, split the expectation into $\E_m[\1_{B^m_i} \Psi_{jk}] \E_m[\Psi_{n\ell}]$
	and apply \eqref{eq:bounds.separate.with.i} for the first factor and Lemma \ref{lem:cancellations.not.cross} for the second factor.
\end{proof}

We finish this step by giving the proofs of Lemmas \ref{lem:cancellations.not.cross}, \ref{lem:cancellations.cross} and \ref{lem:bounds.not.cross}.

\begin{proof}[Proof of Lemma \ref{lem:cancellations.not.cross}]
	By \eqref{eq:exp.VAVu}, we have
	\begin{align*}
		\E_m[\Psi^{(1)}_{k}] = G \rho A(F - \Fcal),
	\end{align*}
	and using \eqref{eq:F-F.L^2} yields \eqref{eq:Psi^1}.
	Similarly, for $j \neq k$, $i\neq j$,
	\begin{align*}
		\E_m[\Psi^{(2)}_{j,k}]  = G \Rcal A  (F - \Fcal).
	\end{align*}
	Using again \eqref{eq:F-F.L^2} and recalling from Lemma \ref{lem:auxiliary.averages}  that $\mathcal R$ is a bounded operator from $L^2(K)$ to $\dot H^{-1}(\R^3)$  yields \eqref{eq:Psi^2}. For $i=j$, $\Psi^{(2)}_{j,k} = 0$ and there is nothing to prove. 
\end{proof}

\begin{proof}[Proof of Lemma \ref{lem:cancellations.cross}]
	Regarding \eqref{eq:jn}, we have
	\begin{align*} % (y_1,v_1)-> j = n, (y_2,v_2) -> k, (y_3,v_3) -> \ell
		\E_m\left[\Psi_{j,k} \Psi_{j,\ell}\right] 
		& = \iiint \left(\vphantom{\sum_{i=1}^{m}}G\left[\rho A\left(F - \bigl((u)_{y_2}-v_2\bigr)\delta^m_{y_2} \right) \right]- G\left[ \bigg(A\left(F - \bigl((u)_{y_2} -v_2\bigr)\delta^m_{y_2}\right) \bigg)_{y_1} \delta^m_{y_1} \right]\right) \\
		 & \qquad \qquad \left(\vphantom{\sum_{i=1}^{m}}G\left[\rho A \left(F -\bigl((u)_{y_3} -v_3\bigr)\delta^m_{y_3}\right) \right]- G\left[ \bigg(A\left(F - \bigl((u)_{y_3} -v_3\bigr)\delta^m_{y_3}\right) \bigg)_{y_1} \delta^m_{y_1} \right]\right) \\
		& \qquad \qquad \qquad \qquad f(\de y_1,\de v_1) f(\de y_2,\de v_2)f(\de y_3,\de v_3)\\
		& = \int \rho(y_1) \left( G\rho A(F - \Fcal) -  (A(F - \Fcal))_{y_1} G \delta^m_{y_1}  \right)^2 \dd y_1.
	\end{align*}
	We obtain
	\begin{align*}
		 \|\E_m\left[\Psi_{j,k} \Psi_{j,\ell}\right] \|_{L^1(K')} 
		& \lesssim \|G\rho A(F - \Fcal)\|_{L^2(K')}^2 + \int \rho(y) (A(F-\Fcal))^2_y \|G \delta^m_y\|_{L^2(K)}^2 \dd y\\
		& \lesssim  m^{-2} + \|A(F -  \Fcal)\|^2_{L^2(K')}  \lesssim m^{-2},
	\end{align*}
		where we used \eqref{eq:F-F.L^2} for both terms and \eqref{eq:G.delta.L^2_loc} and \eqref{eq:V()} for the second term.
	
	\medskip

	Regarding \eqref{eq:kl}, we compute
	\begin{align*} % (y_1,v_1)--> j, (y_2,v_2) -> k = \ell, (y_3,v_3) -> n
	 \E_m\left[\Psi_{j,k} \Psi_{n,k}\right]  &= \iiint \left(\vphantom{\sum_{i=1}^{m}}G\left[\rho A\left(F -\bigl((u)_{y_2} - v_2\bigr)\delta^m_{y_2} \right) \right]- G\left[ \bigg(A\left(F -\bigl((u)_{y_2} - v_2\bigr)\delta^m_{y_2}\right) \bigg)_{y_1} \delta^m_{y_1} \right]\right) \\
		& \qquad \qquad \left(\vphantom{\sum_{i=1}^{m}}G\left[\rho A\left(F -\bigl((u)_{y_2} - v_2\bigr)\delta^m_{y_2} \right) \right]- G\left[ \bigg(A\left(F -\bigl((u)_{y_2} - v_2\bigr)\delta^m_{y_2}\right) \bigg)_{y_3} \delta^m_{y_3} \right]\right) \\
	&	\qquad \qquad \qquad \qquad f(\de y_1,\de v_1) f(\de y_2,\de v_2)f(\de y_3,\de v_3) \\
		& = \int \rho(y_2) \left(G(\rho - \Rcal)A F - \bigl((u)_{y_2}-v_2\bigr) G(\rho - \Rcal)A  \delta^m_{y_2} \right)^2 f(\de y_2,\de v_2).
	\end{align*}	
	Thus, we obtain 
	\begin{align*}
		\|\E_m\left[\Psi_{j,k} \Psi_{n,k}\right] \|_{L^1(K')} 
		&\lesssim   \|G(\rho - \Rcal)AF\|_{L^2(K')}^2 + \sup_z \|G(\rho - \Rcal)A \delta^m_z\|^2_{L^2(K')}\int ((u)_z-v)^2  f(\de z,\de v) \\
		& \lesssim m^{-2},
	\end{align*}
	where we used \eqref{eq:G.delta.L^2_loc} for both terms and \eqref{eq:V()} and \ref{ass:energy} for the second term.
	
	\medskip
	
	Finally, to prove \eqref{eq:jl}, we just apply Young's inequality to reduce to the previous two estimates. Indeed,
	\begin{align*} % y -> (y_1,v_1) = \ell, z->(y_2,v_2), w -> (y_3,v_3)
		\E_m\left[\Psi_{j,k} \Psi_{n,j}\right] 
% 		& = \iiint \left(\vphantom{\sum_{i=1}^{m}}G\left[\rho A \left(F - \bigl((u)_{y_2}-v_2\bigr)\delta^m_{y_2}\right) \right] - G\left[ \bigg(A\left(F - \bigl((u)_{y_2}-v_2\bigr)\delta^m_{y_2}\right) \bigg)_{y_1} \delta^m_{y_1} \right]\right) \\
% 		& \cdot \left(\vphantom{\sum_{i=1}^{m}}G\left[\rho A \left(F - \bigl((u)_{y_1}-v_1\bigr)\delta^m_{y_1}\right) \right] - G\left[ \bigg(A\left(F - \bigl((u)_{y_1}-v_1\bigr)\delta^m_{y_1}\right) \bigg)_{y_3} \delta^m_{y_3} \right]\right)f(\de y_1,\de v_1)f(\de y_2,\de v_2)f(\de y_3,\de v_3)\\
		&= \int \left( G \rho A (F - \Fcal) -  \left(A (F - \Fcal)u\right)_{y} G \delta^m_{y}\right) \\
		& \qquad   \left(G(\rho - \Rcal)A F - \bigl( (u)_{y}-v\bigr) G(\rho-\Rcal)A \delta^m_y \right) f(\de y,\de v)\\
		& \leq  \int \rho(y)\left( G \rho A (F - \Fcal) -  \left(A (F - \Fcal)u\right)_{y} G \delta^m_{y}\right)^2 \dd y \\
		& +  \int \left(G(\rho - \Rcal)A F - \bigl( (u)_{y}-v\bigr) G(\rho-\Rcal)A \delta^m \right)^2 f(\de y,\de v).
	\end{align*}
	These two terms are exactly the ones we have estimated in the previous two steps.
\end{proof}

\begin{proof}[Proof of Lemma \ref{lem:bounds.not.cross}]
	The first estimate, \eqref{eq:bounds.separate.no.i}, follows directly from \eqref{eq:exp.VAVu} and \eqref{est_W_i.F} together with the fact that the operators
	$G \rho A $, $G \rho A$,  $G \Rcal A$ and $G \Rcal A$ are all bounded from $\dot H^1(\R^3)$ to $L^2_\loc(\R^3)$.
	
	\medskip
	
	Regarding \eqref{eq:bounds.separate.with.i}, we first observe that these estimates follow directly from \eqref{eq:bounds.separate.no.i}
	in the cases, when $i \neq k$. Indeed,  if $i$ is different from both $j$ and $k$, the expectation factorizes. Moreover, the case $i=j$ is trivial, since the terms with index $j$ vanish for $i=j$.
	
	If $i=k$, we only need to consider those terms, where $k$ appears, i.e. $\Psi^{(1,2)}_k$ and $\Psi^{(2,2)}_{j,k}$.
	Again, we only need to consider the case $j \neq k = i$.
	
	 We have for $\Psi^{(1,2)}_k$
	\begin{align}
		\|\E_m [\1_{B^m_i} \Psi^{(1,2)}_{i}]\|_{L^2(\R^3)} & = \left \| \int \1_{B^m(y)} G \rho A \left[ \bigl((u)_y - v\bigr) \delta^m_y\right] f(\de y,\de v) \right\|_{L^2(\R^3)} \\
		&\leq \sup_{y \in \R^3}\|G\rho A\delta^m_y \|_{L^\infty(\R^3)}  \left\|\int  \bigl((u)_y-v\bigr) \1_{B^m(y)} f(\de y, \de v) \right\|_{L^2(\R^3)} \\
		& \lesssim m^{-3} \| (\rho (u)_\cdot -j)_\cdot \|_{L^2(\R^3)} \lesssim m^{-3},
		\end{align}
	where we used \eqref{eq:G.delta.L^2_loc}, \eqref{eq:()} and \eqref{eq:V()}.
	Since for $j \neq i$, 
	\begin{align}
	\E_m [\1_{B^m_i} \Psi^{(2,2)}_{j,i}] & = \int \1_{B^m(y)} G \Rcal A\left[ \bigl((u)_y-v\bigr) \delta^m_y\right] f(\de y, \de v),
	\end{align}
	the estimate of this term is analogous.
\end{proof}

\subsubsection*{Step 3.3: The cases in which the number of different indices is  $\mathbf{3}$ or less.}

It remains to estimate $|I_3^{i,j,k,n,\ell}|$, when $\# \{i,j,k,n,\ell\} \leq 3$. We will show that
$|I_3^{i,j,k,n,\ell}| \lesssim m^{-3}$ for $\# \{i,j,k,n,\ell\} = 3$, and $|I_3^{i,j,k,n,l}| \lesssim m^{-2}$ for $\# \{i,j,k,n,\ell\} \leq 2$.
Formally, a factor $m^{-3}$ can be expected to come from the term $\1_{B^m_i}$, so that cancellations are not needed for the estimates of those term.
We will see that this strategy works for all the terms except for $I_3^{i,j,i,j,\ell}$ with $i,j,\ell$ mutually distinct.

\medskip

Thus, in all cases except $I_3^{i,j,i,j,\ell}$ with $i,j,\ell$ mutually distinct, we just brutally estimate the product $ \Psi_{j,k} \Psi_{n,\ell}$ via the triangle inequality
\begin{align}
	\left|I_3^{i,j,k,n,\ell}\right| \leq \sum_{\alpha,\beta,\gamma,\delta =1}^2 \int \left|\E_m\left[ \1_{B^m_i} \Psi_{j,k}^{(\alpha,\beta)} \Psi_{n,\ell}^{(\gamma,\delta)}\right]\right|,
\end{align}
with the convention that $\Psi^{(1,1)}_{j,k} = \Psi^{(1,1)}$, and similarly for $\Psi^{(1,2)}_{j,k}$ and $\Psi^{(2,1)}_{j,k}$.
%This immediately implies that we do not have to consider any cases where $i=j$ (or $i=n$). Indeed, in these cases $\Psi^{(2)}_{j,k} = 0$
%and $\Psi^{(1)}_{k}$ does not depend on $j$

We now consider all possible cases of  $(\alpha,\beta,\gamma,\delta) \in \{1,2\}^4$ and $\# \{i,j,k,n,\ell\} \leq 3$.
Since $\Psi^{(1,1)}$ does not depend on any index and both $\Psi^{(1,2)}_{k}$ and $\Psi^{(2,1)}_{j}$ only depend on one index (not taking into account the dependence of $i$ since $\Psi^{(2,1)}_{i} = 0$ anyway), the number of cases to be considered considerably reduces for these terms.

In order to exploit this in the sequel, we introduce the following slightly abusive notation.
When considering the term  $\E_m[ \1_{B^m_i} \Psi_{j,k}^{(\alpha,\beta)} \Psi_{n,\ell}^{(\gamma,\delta)}]$ for fixed $\alpha, \beta, \gamma, \delta$, we
define the notion of \emph{relevant indices} to be the subset of indices $\{i,j,k,n,\ell\}$ appearing in this product after replacing $\Psi^{(1,1)}_{j,k}$ by $\Psi^{(1,1)}$ and similarly for $\Psi^{(1,2)}_{j,k}$, $\Psi^{(2,1)}_{j,k}$ and for the indices $n,\ell$.

%Our first preliminary observation therefore regards the case where $\# \{i,j,k,n,l\} \leq 3$ but the relevant indices are all distinct.
%In this case
%\begin{align}
%	|\E_m[ \1_{B^m_i} \Psi_{j,k}^{(\alpha,\beta)} \Psi_{n,l}^{(\gamma,\delta)}]| \lesssim m^{-3} \|L_1 u L_2 u\|_{L^1(K)}^2 \lesssim m^{-3} \|u\|_{\dot H^1}^2,
%\end{align}
%where both $L_1$ and $L_2$ is one of the operators $AVGV$, $A\V G V$, $AV G^{-1} G^m \V$ or $A\V G^{-1} G^m \V$ .

To further reduce the number of cases that we have to consider, we next argue that we do not have to consider the cases $\{j,k,n,\ell\}$ with
 $J \cap \{j,k\} \cap \{n,\ell\} = \emptyset$, where $J$ is the set of relevant indices.
Indeed, in all these cases, the expectation factorizes, and we conclude by the bounds provided by Lemma \ref{lem:bounds.not.cross}.
In particular, we do not have to consider any case where $\Psi^{(1,1)}$ appears.

%We therefore only need to consider cases, where the relevant indices are not distinct.
Moreover, if $j$ is a relevant index and $i=j$, then $\Psi^{(2,2)}_{j,k} = \Psi^{(2,1)}_{j} = 0$, and therefore, there is nothing to estimate. If \(j\) and \(k\) are both relevant indices and \(j=k\), then \(\Psi^{(2,2)}_{j,j}=0\), and therefore, there is nothing to estimate either.
The same reasoning applies to the cases where $i=n$ and \(n=\ell\), respectively.

We now list all the cases that are left to consider. Cases that are equivalent by symmetry we list only once.
We use the convention here, that we only specify which relevant indices coincide; relevant indices which are not explicitly denoted as equal are assumed to be different. The indices which are not relevant may take any number, in particular coinciding with each other or with relevant indices.

\begin{enumerate}
\item {$(\alpha,\beta,\gamma,\delta) = (2,2,2,2)$: }
Relevant indices: $\{i,j,k,n,\ell\}$. Since all the indices are relevant, we only have to consider cases where 
at least two pairs or three indices coincide. All the other cases are already covered when we have estimated $I^{i,j,k,n,\ell}$ with 
$\#\{i,j,k,n,\ell\} \geq 4$. The cases left to consider are
\begin{enumerate}
	\item $i=k$, $j=n$, \label{it:2222.ik.jn}
	\item $i=k$, $j=\ell$, \label{it:2222.ik.jl}
	\item $i=k=\ell$, \label{it:2222.ikl}
	\item $j=n$, $k=\ell$, \label{it:2222.jn.kl}
	\item $j=\ell$, $k=n$, \label{it:2222.jl.kn}
	%\item $j=k=n$, \label{it:2222.jkn}
	%\item $j=k=\ell$, \label{it:2222.jkl}
	\item $i=k=\ell$, $j=n$. \label{it:2222.ikl.jn}
	%\item $j=k=n=\ell$, \label{it:2222.jknl}
	%\item $i=k$, $j=n=\ell$. \label{it:2222.ik.jnl}
\end{enumerate}

\item {$(\alpha,\beta,\gamma,\delta) = (2,1,2,2)$: }
Relevant indices: $\{i,j,n,\ell\}$. Cases to consider: 
\begin{enumerate}
	\item $j=n$, \label{it:2122.jn}
	\item $j=\ell$, \label{it:2122.jl}
	\item $i=\ell$, $j=n$.\label{it:2122.il.jn}
	%\item $j=n=\ell$.\label{it:2122.jnl}
\end{enumerate}

\item {$(\alpha,\beta,\gamma,\delta) = (2,1,2,1)$: }
Relevant indices: $\{i,j,n\}$.  Only case to consider: $j=n$. \label{it:2121.jn}

\item {$(\alpha,\beta,\gamma,\delta) = (1,2,2,2)$: }
Relevant indices: $\{i,k,n,\ell\}$. Cases to consider: 
\begin{enumerate}
	\item $i=k=\ell$, \label{it:1222.ikl}
	\item $i=\ell$, $k=n$, \label{it:1222.il.kn}
	\item $k=n$. \label{it:1222.kn}
	%\item $k=n=\ell$.  \label{it:1222.knl}
\end{enumerate}

\item {$(\alpha,\beta,\gamma,\delta) = (1,2,2,1)$: } \label{it:1221.kn}
Relevant indices: $\{i,k,n\}$. Only case to consider: $k=n$.

\item {$(\alpha,\beta,\gamma,\delta) = (1,2,1,2)$: }
Relevant indices: $\{i,k,\ell\}$.  Cases to consider:
\begin{enumerate}
	\item $k=\ell$, \label{it:1212.kl}
	\item $i = k = \ell$. \label{it:1212.ikl}
\end{enumerate}

\end{enumerate}

In order to conclude the proof of the lemma, it now remains to give estimates for the cases listed above.

\medskip

The case \eqref{it:2222.ik.jn}:
As mentioned at the beginning of Step 3.3, this is the case, where we rely on cancellations with $\Psi^{(2,1)}$ coming from case \eqref{it:2122.il.jn}.
We estimate
\begin{align}
	&\E_m\left[ \1_{B^m_i}(x) \Psi_{j,i}^{(2,2)}(x) (\Psi_{j\ell}^{(2,1)} - \Psi_{j\ell}^{(2,2)})(x) \right] \\
	&=  \iint  \1_{B^m(y_1)}(x) G  \left[\left(A\left[ \bigl((u)_{y_1}-v_1\bigr)  \delta^m_{y_1} \right]\right)_{y_2} \delta^m_{y_2} \right](x) G \left[\left(A\left( F-\Fcal\right)\right)_{y_2} \delta^m_{y_2} \right] (x) f(\de y_1,\de v_1) f(\de y_2,\de v_2) \\
	& =  \iint \rho(y_2)\1_{B^m(y_1)}(x) \left(A \left[ \bigl((u)_{y_1}-v_1\bigr)  \delta^m_{y_1}  \right]\right)_{y_2} (G \delta^m_{y_2})^2 (x)  \left(A (F - \Fcal) \right)_{y_2} f(\de y_1,\de v_1) \dd y_2.
\end{align}
Hence, since \(A\) maps \(L^2(\R^3)\cap\dot{H}^{-1}(\R^3)\) to \(L^{\infty}(\R^3)\) and by \eqref{eq:F-F.L^2}  
\begin{align}
	&\int \left|\E_m\left[ \1_{B^m_i} \Psi_{ji}^{(2,2)} (\Psi_{j\ell}^{(2,1)} - \Psi_{j\ell}^{(2,2)}) \right] \right| \dd x \\
	& \lesssim m^{-1}\iiint \rho(y_2)\1_{B^m(y_1)}(x) \left|\left(A \left[ \bigl((u)_{y_1}-v_1\bigr)  \delta^m_{y_1}  \right]\right)_{y_2}\right|  (G \delta^m_{y_2})^2 (x) f(\de y_1,\de v_1)\dd y_2 \dd x.
\end{align}

By \eqref{eq:G.delta.pointwise}
\begin{align} \label{eq:average.G.delta}
	\int \1_{B^m(y_1)}(x) (G \delta^m_{y_2})^2 (x) \dd x \lesssim m^{-3} \frac{1}{|y_2-y_1|^2 + m^{-2}}.
\end{align}
Combining this with the pointwise estimate \eqref{eq:A.delta.pointwise} yields
\begin{align}
	&\int \left|\E_m\left[ \1_{B^m_i} \Psi_{ji}^{(2,2)} (\Psi_{j\ell}^{(2,1)} - \Psi_{j\ell}^{(2,2)})\right] \right| \dd x \\
	& \lesssim m^{-4} \iint \rho(y_2) |(u)_{y_1}-v_1| \frac{1}{|y_2-y_1|^2 + m^{-2}} \left(1+\frac{1}{|y_2-y_1| + m^{-1}}\right)f(\de y_1,\de v_1) \dd y_2 \\
	& \lesssim m^{-4} \log m \int   |(u)_{y_1}-v_1| f(\de y_1,\de v_1) \lesssim m^{-4} \log m,
\end{align}
where we used \eqref{eq:V()} and \ref{ass:energy}.

\medskip

The case \eqref{it:2222.ik.jl} is similar. However, it turns out to be easier, since the singularity is subcritical, so we do not need to take into account cancellations. Indeed,
\begin{align}
	&  \E_m\left[ \1_{B^m_i}(x) \Psi_{ji}^{(2,2)}(x)  \Psi_{nj}^{(2,2)}(x)  \right]  \\
	&= \iint  \1_{B^m(y_1)}(x)   G  \left[\left(A\left[ \bigl( (u)_{y_1}-v_1\bigr) \delta^m_{y_1} \right]\right)_{y_2} \delta^m_{y_2} \right] (x)\\
	& \cdot G  \left[ \int \left(A  \left [ \bigl((u)_{y_2}-v_2\bigr) \delta^m_{y_2} \right]\right)_{y_3}  \delta^m_{y_3} f(\de y_3,\de v_3) ] \right]  (x) f(\de y_1,\de v_1) f(\de y_2,\de v_2)  \\
 	& = \iint \bigl( (u)_{y_1}-v_1\bigr)\bigl((u)_{y_2}-v_2\bigr) \1_{B^m(y_1)}(x) \left(A \delta^m_{y_1} \right)_{y_2}  (G  \delta^m_{y_2})(x)	\left(G \Rcal A \delta^m_{y_2}\right) (x) f(\de y_1,\de v_1) f(\de y_2,\de v_2) .
 \end{align}
 Thus, since \(G\Rcal\) maps \(L^2(K)\) to \(L^{\infty}(\R^3)\) and by \eqref{eq:G.delta.L^2_loc}
 \begin{align}
	&\int \left| \E_m\left[ \1_{B^m_i} \Psi_{ji}^{(2,2)}  \Psi_{nj}^{(2,2)}  \right] \right| \dd x  \\
 	&\lesssim \iint \bigl( (u)_{y_1}-v_1\bigr)\bigl((u)_{y_2}-v_2\bigr) \1_{B^m(y_1)}(x)  \left|\left(A \delta^m_{y_1}  \right)_{y_2}\right| \left| (G  \delta^m_{y_2})
 	 \right|(x)f(\de y_1,\de v_1) f(\de y_2,\de v_2). \label{eq:step:2222.ik.jl}
\end{align}
Now we proceed as in the previous case to estimate
\begin{align}
	&\int \left| \E_m\left[ \1_{B^m_i} \Psi_{ji}^{(2,2)}  \Psi_{nj}^{(2,2)}  \right] \right| \dd x  \\
	& \lesssim m^{-3} \iint \left( \bigl( (u)_{y_1}-v_1\bigr)^2+\bigl( (u)_{y_2}-v_2\bigr)^2 \right) \frac{1+\frac{1}{|y_2-y_1| + m^{-1}}}{|y_2-y_1| + m^{-1}} f(\de y_1,\de v_1) f(\de y_2,\de v_2) \\
	&\lesssim m^{-3}.
\end{align}

\medskip

The case \eqref{it:2222.ikl}:
We have 
\begin{align}
	& \E_m\left[ \1_{B^m_i}(x) \Psi_{ji}^{(2,2)}(x)  \Psi_{ni}^{(2,2)}(x) \right] \\ 
	& =  \int \1_{B^m(y_1)}(x) 
	\left( G \left[ \int \rho(y_2) \left(A \left[ \bigl((u)_{y_1}-v_1\bigr) \delta^m_{y_1}\right] \right)_{y_2} \delta^m_{y_2} \dd z \right] \right)(x)^2 f(\de y_1,\de v_1) \\
	& = \int \bigl((u)_{y_1}-v_1\bigr)^2 \1_{B^m(y_1)}(x) \left( G \Rcal A \delta^m_{y_1} \right)(x)^2 f(\de y_1,\de v_1).
\end{align}
Thus, using first that \(\|G\Rcal A \delta_{y_1}^m\|_{L^{\infty}(\R^3)} \lesssim 1\) as above, \ref{ass:energy} and \eqref{eq:()} together with \eqref{eq:V()}.
\begin{align}
	& \int \left| \E_m\left[ \1_{B^m_i} \Psi_{ji}^{(2,2)}  \Psi_{ni}^{(2,2)} \right] \right| \dd x \lesssim \int \int \bigl((u)_{y_1}-v_1\bigr)^2 \1_{B^m(y_1)}(x) f(\de y_1, \de v_1) \dd x \lesssim m^{-3}.
\end{align}

\medskip

The case \eqref{it:2222.jn.kl}: We compute
\begin{align}
	& \E_m\left[ \1_{B^m_i}(x) \Psi_{jk}^{(2,2)}(x)  \Psi_{jk}^{(2,2)}(x) \right] \\ 
	&= m^{-3} \iint (\rho)_x \rho(y_2)\left(G  \left[\left(A\left[ \bigl((u)_{y_1}-v_1\bigr) \delta^m_{y_1} \right]\right)_{y_2} \delta^m_{y_2} \right](x) \right)^2
 	 f(\de y_1,\de v_1) \dd y_2 \\
 	&= m^{-3} \iint (\rho)_x \rho(y_2)\bigl((u)_{y_1}-v_1\bigr)^2 \left(A \delta^m_{y_1} \right)^2_{y_2}  (G \delta^m_{y_2})^2(x)  f(\de y_1,\de v_1) \dd y_2.
\end{align}
Using \eqref{eq:G.delta.L^2_loc} twice, \eqref{eq:V()} together with \ref{ass:rho} and \ref{ass:energy}, we can successively estimate the integral in $x$, $y_2$ and $(y_1,v_1)$ to deduce
\begin{align}
	\int \left|  \E_m\left[ \1_{B^m_i} \Psi_{jk}^{(2,2)}  \Psi_{jk}^{(2,2)} \right]  \right| \dd x 
	& \lesssim m^{-3} \int  \rho(y_2)\bigl( (u)_{y_1}-v_1\bigr)^2 \left(A\left[  \delta^m_{y_1} \right]\right)_{y_2}^2
 	 f(\de y_1,\de v_1) \dd y_2 \\
 	 & \lesssim m^{-3} \int  \bigl( (u)_{y_1}-v_1\bigr)^2 f(\de y_1,\de v_1) 
 	 \lesssim m^{-3}.
\end{align}

\medskip
The case \eqref{it:2222.jl.kn}:
We just observe that by Young's inequality
\begin{align}
	&\int \left| \E_m\left[ \1_{B^m_i} \Psi_{jk}^{(2,2)}  \Psi_{kj}^{(2,2)} \right] \right| \dd x \leq \int \E_m\left[ \1_{B^m_i} \left( \left(\Psi_{jk}^{(2,2)}\right)^2 +  \left(\Psi_{kj}^{(2,2)}\right)^2 \right)\right] \dd x.
\end{align}
Thus, this case is reduced to case \eqref{it:2222.jn.kl}.

\medskip

The case \eqref{it:2222.ikl.jn}. Note that $\# \{i,j,k,n,\ell\} = 2$.
Hence, we only need a bound $m^{-2}$.
We have
\begin{align}
	& \E_m\left[ \1_{B^m_i}(x) \Psi_{ji}^{(2,2)}(x)  \Psi_{ji}^{(2,2)}(x) \right]\\
	&= \iint  \rho(y_2) \1_{B^m(y_1)}(x) \left(G  \left[\left(A \left[ \bigl((u)_{y_1}-v_1\bigr) \delta^m_{y_1} \right]\right)_{y_2} \delta^m_{y_2} \right](x) \right)^2 f(\de y_1,\de v_1) \dd y_2\\
	&= \iint  \rho(y_2) \bigl((u)_{y_1}-v_1\bigr)^2 \1_{B^m(y_1)}(x) \left(A\delta^m_{y_1}\right)^2_{y_2}  (G \delta^m_{y_2})^2(x) f(\de y_1,\de v_1) \dd y_2.
\end{align}
We can estimate the integral in $x$ using again \eqref{eq:average.G.delta}
\begin{align}
&\int \left| \E_m\left[ \1_{B^m_i} \Psi_{ji}^{(2,2)}  \Psi_{ji}^{(2,2)} \right]  \right| \dd x \\
&\leq \int \rho(y_2) \bigl((u)_{y_1}-v_1\bigr)^2 \1_{B^m(y_1)}(x) \left(A\delta^m_{y_1}\right)^2_{y_2}  (G \delta^m_{y_2})^2(x) f(\de y_1,\de v_1) \dd y_2 \dd x \\
& \lesssim  m^{-3} \int \rho(y_2) \bigl((u)_{y_1}-v_1\bigr)^2 \left(A\delta^m_{y_1}\right)^2_{y_2}   \frac{1}{|y_2-y_1|^2 + m^{-2}} f(\de y_1,\de v_1) \dd y_2.
\end{align}
Moreover, using \eqref{eq:A.delta.pointwise}, we find
\begin{align}
		&\int \left| \E_m \left[ \1_{B^m_i} \Psi_{ji}^{(2,2)}  \Psi_{ji}^{(2,2)} \right]  \right| \dd x  \\
		& \lesssim  m^{-3} \int \rho(y_2) \bigl((u)_{y_1}-v_1\bigr)^2\left( \frac{1}{|y_2-y_1|^2 + m^{-2}} + \frac{1}{|y_2-y_1|^4 + m^{-4}}\right)  f(\de y_1,\de v_1) \dd y_2 \\
		& \lesssim m^{-2}\int\bigl((u)_{y_1}-v_1\bigr)^2 f(\de y_1,\de v_1) \lesssim m^{-2},
\end{align}
where we used \eqref{eq:V()} and \ref{ass:energy} in the last estimate.
Note that this estimate is sufficient, since the number of different indices in this case is only $2$.

\medskip

The cases \eqref{it:2122.jn} and \eqref{it:2122.jl} are reduced  to the cases \eqref{it:2121.jn} and \eqref{it:2222.jn.kl} by Young's inequality, analogously as in the case \eqref{it:2222.jl.kn}.

\medskip

The case \eqref{it:2122.il.jn} was estimated together with the case \eqref{it:2222.ik.jn} if $k$ is different from the other indices.

If $k$ coincides with one of the other indices, the number of different indices is $2$ and we can reduce the case to the cases
\eqref{it:2121.jn} and \eqref{it:2222.ikl.jn} by Young's inequality.

\medskip

The case \eqref{it:2121.jn}:
In this case we get a factor $m^{-3}$ from $\1_{B^m_i}$ and thus the desired estimate follows from
\begin{align}
	\|\E_m[|\Psi^{(2,1)}_j|^2]\|_{L^1(K)}  \lesssim \int \rho(y_1) |(AF)_{y_1}|^2 \|G \delta_{y_1}^m\|_{L^2(K)}^2 \dd y_1 \lesssim 1,
\end{align}
where we used \eqref{eq:G.delta.L^2_loc} and \eqref{eq:V()}.

\medskip

The case \eqref{it:1222.ikl} is estimated by an analogous computation as the one at the end of the proof of Lemma \ref{lem:bounds.not.cross}, relying on the fact that
\begin{align}	\label{eq:Psi^1,2.infty}
	\|\Psi^{(1,2)}_{k}\|_{L^{\infty}(\R^3)} \lesssim |(u)_k-V_k|,
\end{align}
which is a direct consequence of \eqref{eq:G.delta.L^2_loc} and the fact that $G\rho$ is bounded from $L^2(K)$ to $L^\infty(\R^3)$.
Since the index $n$ is free, a similar bound can be used for $\Psi^{(2,2)}_{n,\ell}$. More precisely,
\begin{align}
	|\E_m[\1_{B^m_i}  \Psi^{(1,2)}_{i} \Psi^{(2,2)}_{n,i}]| &\leq \int \1_{B^m(y_1)} \left|G \Rcal A \left[\bigl((u)_{y_1}-v_1\bigr) \delta^m_{y_1}\right]\right|
	\left|G \rho A \left[\bigl((u)_{y_1}-v_1\bigr) \delta^m_{y_1}\right]\right| f(\de y_1,\de v_1)  \\
	&\lesssim \int  \1_{B^m(y_1)} |(u)_{y_1}-v_1|^2 f(\de y_1,\de v_1), 
\end{align}
since \( G\Rcal\) and \(G\rho\) map \(L^2(K)\) to \(L^{\infty}(\R^3)\) and using again \eqref{eq:G.delta.L^2_loc}. As before, integrating in $x$ yields a factor $m^{-3}$.

\medskip

The case \eqref{it:1222.il.kn}: Using \eqref{eq:Psi^1,2.infty} yields
\begin{align}
	|\E_m[\1_{B^m_i}  \Psi^{(1,2)}_{k} \Psi^{(2,2)}_{k,i}]| \lesssim \int \1_{B^m(y_1)} |(u)_{y_1}-v_1||(u)_{y_2}-v_2| G[\delta^m_{y_2}] |(A  \delta^m_{y_1})_{y_2}| f(\de y_1,\de v_1)f(\de y_2,\de v_2),
\end{align} 
which is the same as \eqref{eq:step:2222.ik.jl} which we have already estimated.

\medskip

The case \eqref{it:1222.kn} is reduced  to the cases \eqref{it:1212.kl} and \eqref{it:2222.jn.kl} by Young's inequality.

\medskip

The case \eqref{it:1221.kn} is reduced  to the cases \eqref{it:1212.kl} and \eqref{it:2121.jn} by Young's inequality.
%, since
%\begin{align}
%	|\E_m[\1_{B^m_i} \Psi^{(1,2)}_{k} \Psi^{(2,1)}_{k} ]| \leq |\E_m[\1_{B^m_i} |\Psi^{(1,2)}_{k}|^2 + |\Psi^{(2,1)}_{k}|^2 ]|.
%\end{align}

\medskip

The cases  \eqref{it:1212.kl} and \eqref{it:1212.ikl} are estimated by an analogous computation as the one at the end of the proof of Lemma \ref{lem:bounds.not.cross}, relying on \eqref{eq:Psi^1,2.infty} again.

\appendix

\section{Appendix}

\subsection{Proofs of the auxiliary estimates from Section \ref{sec:Auxiliary}}\label{subsec:appendix_proofs}
%\subsection{Proofs of auxiliary lemmas}

\begin{proof}[Proof of Lemma \ref{lem:auxiliary.averages}]
	\textbf{(i)} Define
	\[[w](x) = \fint_{\partial B^m_x} w(y) \dd \mathcal{H}^2(y).\]
	We observe that for $w \in W^{1,p}(\R^3)$, \(1\leq p <\infty\)
	\begin{align}
	\|[w]\|_{L^p(\R^3)}^p = \int_{\R^3} \left |\fint_{\partial B^m(x)} w(y) \dd \mathcal{H}^2(y) \right|^p \dd x 
	&\leq \int_{\R^3} \fint_{\R^3} \1_{|x - y| = m^{-1}} |w(y)|^p \dd \mathcal{H}^2(y) \dd x\\
	&= \int_{\R^3} \fint_{\R^3} \1_{|y'| = m^{-1}} |w(y' + x)|^p \dd \mathcal{H}^2(y')  \dd x\\
	&= \int_{\R^3} \fint_{\R^3} \1_{|y'| = m^{-1}} |w(x')|^p \dd \mathcal{H}^2(y')  \dd x'\\
	&= \|w\|_{L^p(\R^3)}^p.
	\end{align}
By density,  the operator $[\cdot]$ is defined on $L^p(\R^3)$. Using an analogous argument also for the average $(\cdot)$ over the full ball yields \eqref{eq:()}.

\medskip

\textbf{(ii)} If \(w\in L^p(K)\), the fact that \(\rho\in L^{\infty}\) has compact support in $K$ implies \eqref{eq:V()}.

\medskip

\textbf{(iii)} To prove \eqref{eq:poincare.average.ball}, we first establish the following  inequality:

Let \(R>0\) and \(\varphi\in L^1(\R^3)\) with \(\varphi\geq 0\), \(\supp \varphi\subset B_R(0)\) and \(\|\varphi\|_{L^1}=1\). Let \(w\in \dot H^1(\R^3)\), then
\begin{align}\label{eq:Poincare.convolution}
\|\varphi\ast w - w\|_{L^2(\R^3)} \lesssim  R \|\nabla w\|_{L^2(\R^3)}.
\end{align}  

There are several ways to prove this. By scaling, it is enough to consider the case \(R=1\). We can use the Fourier transform: observe that \(\hat{\varphi}\in C^{\infty}(\R^3)\) with 
\[|\nabla\hat{\varphi}| = \left|\mathcal{F}(x\varphi)\right| \in L^{\infty}(\R^3).\]
Since \(\hat{\varphi}(0)=1\), this shows that there is a constant \(C>0\) such that \(|(1-\hat{\varphi})(k)| \leq C|k|\). Hence,
\begin{align*}
\|\varphi\ast w - w\|_{L^2(\R^3)}^2 & = \|(1-\hat{\varphi})\hat{w}\|_{L^2(\R^3)}^2 \leq C\|k\hat{w}\|_{L^2(\R^3)}^2 \leq C \|\nabla w \|_{L^2(\R^3)}^2.		
\end{align*}

Now,  \eqref{eq:poincare.average.ball} follows by choosing \(\varphi(x) = \1_{B^m(0)}(x)\).

\medskip
	
\textbf{(iv)} We note that $\Rcal w = [\rho (w)_\cdot]$. Thus, $\Rcal$ is a bounded operator from $L^2(K)$ to $L^2(\R^3) \cap \dot{H}^{-1}(\R^3)$ and from $H^1(K)$ to $H^1(\R^3)$ by the previous estimates, together with the assumption that $\rho \in W^{1,\infty}$ with compact support
and $L^{6/5}(\R^3) \subset \dot H^{-1}(\R^3)$.

\medskip

%	By the proof of the previous lemma and \eqref{eq:V()}, given \(v\in L^2(K)\), \(\mathcal{V}v = \fint_{\partial B^m_x} V(y)(u)_y \dd \mathcal{H}^2(y) \) is well-defined and in \(L^2(\R^3)\). Furthermore, given \(v\in L^2(K)\)
%	\begin{align}
%		\|\V u \|_{\dot{H}^{-1}(\R^3)} & \lesssim \|\V u\|_{L^{6/5}(\R^3)}  \lesssim \|v\|_{L^{6/5}(K)} \lesssim \|v\|_{L^2(K)}.
%	\end{align}
	For the estimate \eqref{eq:V-V.L^2}, we compute, for \(w\in \dot{H}^1(\R^3)\),
	\begin{align*}
	& \left\|  \Rcal w - \rho w \right\|_{L^2(\R^3)} \\
	& = \left\| \fint
	_{\partial B^m(x)} \rho(y) (w)_y \dd \mathcal{H}^2(y) - \rho(x) w(x) \right\|_{L^2(\R^3)} \\ 
	& \leq \left\| \fint_{\partial B^m(x)} \left(\rho(y)-\rho(x)\right) (w)_y \dd \mathcal{H}^2(y) \right\|_{L^2(\R^3)} + \left\|\fint_{\partial B^m(x)} \rho(x) \left((w)_y-w(x)\right)\dd \mathcal{H}^2(y) \right\|_{L^2(\R^3)} \\
	& =: J_1 + J_2.
	\end{align*} 
	Further, it is by Jensen's inequality
	\begin{align*}
	J_1^2 & = \int_{\R^3}\left| \fint_{\partial B^m(x)} \left(\rho(y)-\rho(x)\right) (w)_y \dd \mathcal{H}^2(y) \right|^2 \dd x 
	 \leq \int_{\R^3} \fint_{\partial B^m(x)} \left|\rho(y)-\rho(x)\right|^2 |(w)_y|^2 \dd \mathcal{H}^2(y) \dd x \\
	& \leq m^{-2} \|\nabla \rho\|_{L^{\infty}(\R^3)}^2 \|w\|_{L^2(\R^3)}^2,
	\end{align*}
	where we used \eqref{eq:()}.
	Moreover,
	\begin{align*}
	J_2^2 & = \int_{\R^3}\left| \fint_{\partial B^m(x)} \rho(x) \fint_{B^m(y)} w(z) - w(x) \dd z \dd y \right|^2 \dd x \\
	& \leq \|\rho\|^2_{L^{\infty}} \int_{\R^3}\left| \fint_{\partial B^m(x)} \fint_{B^m(y)} w(z) \dd z \dd y  - w(x) \right|^2 \dd x \\
	& =  \|\rho\|^2_{L^{\infty}} \int_{\R^3} \left| \int_{\R^3} \left(\fint_{\partial B^m(x)}   |B^m|^{-1} \1_{|y-z|\leq R_m} \dd y\right)(w(z)) \dd z - w(x) \right|^2 \dd x \\
	& = \|\rho\|^2_{L^{\infty}}\int_{\R^3} \left|\int_{\R^3}\varphi(x-z)w(z)\dd z - w(x)\right|^2 \dd x,
	\end{align*}
	with the choice
	\[ \varphi(x) = \fint_{\partial B^m(x)} |B^m|^{-1} \1_{|y|\leq R_m} \dd y.\]
	Using Fubini, we easily see that \(\varphi\) satisfies the assumptions to apply \eqref{eq:Poincare.convolution}. Hence
	\[J_2^2 \leq C m^{-2}\|\rho\|^2_{L^{\infty}(\R^3)} \|\nabla w\|_{L^2(\R^3)}^2.\]
	This proves \eqref{eq:V-V.L^2}. Finally, estimate \eqref{eq:V-V.H^-1} follows from testing with \(\psi \in \dot{H}^1(\R^3)\)
	\begin{align*}
	\langle \rho w - \Rcal w,\psi \rangle & = \langle w, \rho\psi - \Rcal \psi \rangle \leq m^{-1} \|w\|_{L^2(\R^3)} \|\rho\|_{W^{1,\infty}(\R^3)} \|\psi\|_{\dot H^1(\R^3)}.
	\end{align*}
	To justify the first line, observe that
	\begin{align*}
	\int_{\R^3} (\Rcal w) (x) \psi(x) \dd x & = \int \rho(x) (w)_x \fint_{\partial B^m(x)} \psi(y) \dd \mathcal{H}^2(y) \dd x \\
	& =  \int \rho(x) \left(\fint_{\R^3} \textbf{1}_{|x-z|\leq 1/m} w(z) \dd z\right) \fint_{\partial B^m(x)} \psi(y) \dd \mathcal{H}^2(y) \dd x \\
	& = \int_{\R^3} w(z) \left(\fint_{\R^3} \textbf{1}_{|x-z|\leq 1/m} \rho(x) \fint_{\partial B^m(x)} \psi(y) \dd \mathcal{H}^2(y) \dd x \right) \dd z \\
	& =  \int_{\R^3} w(z) (\Rcal \psi)(z) \dd z.
	\end{align*}
	
	\medskip
	
	\textbf{(v)} Recall that \(F=\rho u - j\). Since \(\rho \in L^{\infty}\) has compact support and \(u\in \dot{H}^1(\R^3)\), we have \(\rho u \in L^2(\R^3)\). Furthermore, from hypotheses \ref{ass:j} we have $j \in L^2(\R^3)$.
% 	and \ref{ass:rho} we infer that \(j\in L^2(\R^3)\), since by Jensen's inequality
% 	\begin{align*}
% 		\|j\|_{L^2(\R^3)}^2 & = \int_{\R^3}\left|\int_{\R^3} vf(\cdot,\de v)\right|^2 \dd x \leq \int_{\R^3} \rho(x) \int_{\R^3} |v|^2 f(\cdot,\de v) \dd x\\
% 		& \leq \|\rho\|_{L^{\infty}} \int_{\R^3\times \R^3} |v|^2 f(\de x, \de v) < \infty.
% 	\end{align*}
	Since \(\Fcal= \Rcal u - [j]\) and \(u\in L^2(K)\), we have \(\Fcal\in L^2(\R^3)\). Finally, we have with \(W_1= (u)_{1}-V_1\)
	\begin{align*}
		\E_m[W_1^2] & = \int_{\R^3\times\R^3} |(u)_x-v|^2 f(\de x,\de v)  \leq 2\int_{\R^3}\rho(x)|(u)_x|^2 \dd x + 2\int_{\R^3\times \R^3} |v|^2 f(\de x,\de v) \\
		& \leq C\|u\|_{L^2(K)} + 2\int_{\R^3\times \R^3} |v|^2 f(\de x,\de v)
	\end{align*}
	which is uniformly bounded by \eqref{eq:V()} and \ref{ass:energy}.
	
	To prove \eqref{eq:F-F.L^2}, we first focus on estimating the $L^2$-norm. Note that
	\begin{align*}
		F - \Fcal = \rho u - j - \left(\Rcal u - [j]\right).
	\end{align*}
	Hence, it is
	\begin{align*}
		\|F-\Fcal\|_{L^2(\R^3)} \leq \norm{\rho u -\Rcal u}_{L^2(\R^3)} + \norm{j-[j]}_{L^2(\R^3)}.
	\end{align*}
	Using \eqref{eq:V-V.L^2}, it is enough to see
	\begin{align*}
		\norm{w-[w]}_{L^2(\R^3)} & \lesssim m^{-1} \norm{w}_{\dot{H}^1(\R^3)} \quad \text{for all } w\in \dot{H}^1(\R^3).
	\end{align*}
	First, let \(w\in \mathcal{S}(\R^3)\). Then
	\begin{align*}
		\norm{w-[w]}_{L^2(\R^3)}^2 & = \int_{\R^3}\left|\fint_{\partial B^m(x)} w(x)-w(y) \dd \H^2(y)\right|^2 \dd x \\
		& \leq \int_{\R^3}\fint_{\partial B^m(x)} |w(x)-w(y)|^2 \dd \H^2(y) \dd x \\
		& \leq \int_{\R^3}\fint_{\partial B^m(x)} \int_0^1 |\nabla w(x+t(y-x))|^2|x-y|^2 \dd t \dd\H^2(y) \dd x \\
		& \lesssim m^{-2} \fint_{\partial B^m(x)} \int_0^1 \|\nabla w\|_{L^2(\R^3)}^2 \dd t \dd\H^2(y) \\
		& = m^{-2}\|w\|_{\dot{H}^1(\R^3)}^2,
	\end{align*}
	where we used Jensen's inequality twice and the fundamental theorem of calculus. Now by density of \(\mathcal{S}(\R^3)\) in \(\dot{H}^1(\R^3)\), we obtain the estimate of the $L^2$-norm in  \eqref{eq:F-F.L^2}. To estimate the $\dot H^{-1}$-norm in \eqref{eq:F-F.L^2}, we again argue by testing with \(\psi \in \dot{H}^1(\R^3)\). By \eqref{eq:V-V.H^-1}, it is enough to see
	\begin{align*}
		|\langle j-[j],\psi \rangle| = |\langle j, \psi - [\psi] \rangle| \leq \|j\|_{L^2(\R^3)}\|\psi-[\psi]\|_{L^2(\R^3)} \leq m^{-1}\|\psi\|_{\dot{H}^1(\R^3)}. 
	\end{align*}
	This finishes the proof.
\end{proof}

\begin{proof}[Proof of Lemma \ref{lem:Gdelta}]
%	We first give the proof in the case of the Poisson equation. Recalling the definition of $B^m(y) = B_{R_m}(y)$ and \eqref{eq:R_m.Poisson},
%	we have
%	\begin{align}\label{eq:Gdelta}
%		G\delta^m_y(x) = \begin{cases}
%			m & x  \in B^m(y) \\
%			\frac{1}{4\pi|x-y|} & x\in \R^3\setminus B^m(y),
%		\end{cases}
%	\end{align}
%	and
%	\begin{align}\label{eq:nablaGdelta}
%		\nabla G\delta^m_y(x) = \begin{cases}
%			0 & x  \in B^m(y) \\
%			\frac{y-x}{4\pi|x-y|^3} & x\in \R^3\setminus B^m(y)
%		\end{cases}
%	\end{align}
%	so that \eqref{eq:G.delta.pointwise}, \eqref{eq:nablaG.delta.pointwise} and  \eqref{eq:G.delta.L^2_loc} follow immediately. \eqref{eq:nablaG.delta.pointwise} implies that \(\|G\delta\|_{\dot{H}^1(\R^3)}\lesssim m^{1/2}\) and, since  \(G\) is an isometry from \(\dot{H}^{-1}(\R^3)\) to \(\dot{H}^1(\R^3)\), this proves \eqref{est:dirac.-1}.
%	
%	\medskip
%	
	Recalling the definition of $B^m(y) = B_{R_m}(y)$ and \eqref{eq:R_m.Poisson}, it is well-known that
		\begin{align}\label{eq:GdeltaStokes}
		G\delta^m_y(x) = \begin{cases}
			m \mathrm{Id} & x  \in B^m(y) \\
			g(x-y) - \frac{R_m^2}{6} \Delta g(x-y) & x\in \R^3\setminus B^m(y),
		\end{cases}
	\end{align}
	with $g$ as in \eqref{eq:fund.sol}. Then \eqref{eq:G.delta.pointwise}, \eqref{eq:nablaG.delta.pointwise} and  \eqref{eq:G.delta.L^2_loc} follow immediately. \eqref{eq:nablaG.delta.pointwise} implies that \(\|G\delta\|_{\dot{H}^1(\R^3)}\lesssim m^{1/2}\) and, since  \(G\) is an isometry from \(\dot{H}^{-1}(\R^3)\) to \(\dot{H}^1(\R^3)\), this proves \eqref{est:dirac.-1}. The bounds for \(A\) follow by using the identity \(A=G-A\rho G\)
	and that $A\rho$ maps $L^2_\loc(\R^3)$ to $L^\infty(\R^3)$
\end{proof}

\begin{proof}[Proof of Lemma \ref{lem:delta-delta_m}]
    To deduce the bound for $G \delta_y$ in $H^s_\loc(\R^3)$, note for example that $e^{-|x-y|} G \delta_y = e^{-|x-y|}/(4 \pi |x-y|) \in H^s(\R^3) $ (e.g. by Fourier).
% 	For the first inequality, note that \(G\) is a bounded linear map from \(H^{s-2}(\R^3)\) to \(H^{s}(\R^3)\) and that \(\delta_y \in H^{-3/2-\eps}(\R^3)\) for every \(\eps >0\), since \(H^{3/2+\eps}(\R^3)\) embeds into \(C(\R^3)\). Hence, we obtain
% 	\[\|G\delta_y\|_{H^s(K')}  \leq C_s\|\delta_y\|_{H^{s-2}(\R^3)},\]
% 	as claimed. 
	The corresponding estimate for $A$ follows from the identity $A = G - A \rho G$ (cf. \eqref{eq:relation.A.G}) and the fact that $A \rho$ maps $H^s_\loc$ to $H^s_\loc$.
	
	For the second estimate, observe that \(H^{3/2+\eps}(K')\) embeds into the space of \(\eps\)-Hölder continuous functions on \(K'\). Hence, we may estimate, for every \(w\in H^{3/2+\eps}(K')\)
	\begin{align*}
	\langle \delta_y^m - \delta_y, w \rangle \leq \fint_{B^m(y)} |w(x)-w(y)| \dd H^{2}(x) \leq m^{-\eps} \|w\|_{C^{\eps}(K')} \leq Cm^{-\eps}\|w\|_{H^{3/2+\eps}(K')}.
	\end{align*}
	This concludes the proof.
\end{proof}

\begin{proof}[Proof of Lemma \ref{lem:G-1Gm}]
	%We only give the proof in the case of the Stokes equations. The Poisson equation is easier.
	By \eqref{eq:g^m.Stokes}, $G - G^m$ is a convolution operator with convolution kernel
	\begin{align}
		\bar g_m := \eta_m g - \psi_m.
	\end{align}
	Thus,  to prove \eqref{eq:G-G^m.H^k.to.H^k} and  \eqref{eq:G-1GmHktoHk+1}
	it suffices to show
	\begin{align} \label{eq:bar.g_m.L^1}
		\|\nabla^l \bar g_m\|_{L^1(\R^3)} \leq m^{-2 + l}
	\end{align}
	for $l = 0,1$. Moreover, \eqref{eq:bar.g_m.L^1} for $l=2$ implies that $G^m$ is a bounded operator from $\dot H^l(\R^3)$ to $\dot H^{l+2}(\R^3)$ since we know that $G$ is a bounded operator from $\dot H^l(\R^3)$ to $\dot H^{l+2}(\R^3)$.
	
	By definition of $\eta_m$, we have for all $l \in \N$
	\begin{align}
		|\nabla^l (\eta_m g)| \lesssim m^{1 + l} \1_{B_{3 R_m}(0) \setminus B_{2 R_m}(0)}.
	\end{align}
	In particular, for all $1 \leq p \leq \infty$ and all $l \in \N$
	\begin{align} \label{eq:L^p.eta_m.g}
		\|\nabla^l (\eta_m g)\|_{L^p(\R^3)} \lesssim m^{1 + l-3/p}.
	\end{align}
	In view of \eqref{eq:Bogovski.estimate}, this implies
	\begin{align} \label{eq:L^p.psi}
		\|\nabla^l (\eta_m g)\|_{L^p(\R^3)} \lesssim m^{1 + l-3/p},
	\end{align}
	for all $l \geq 1$ and all $1 < p < \infty$. By the H\"older inequality,
	this bound also holds for $p = 1$ and by the Poicar\'e inequality also for $l = 0$.
	Combining \eqref{eq:L^p.eta_m.g} and \eqref{eq:L^p.psi} yields \eqref{eq:bar.g_m.L^1}.
\end{proof}

\subsection{Estimates for \texorpdfstring{$\mathbf{\mathfrak{J}_3}$}{J3}}
In this part of the appendix, we detail the estimates of \(\J_3\) from \eqref{def:frak.J_2}
We follow the same strategy as for \(\J_1\) described in Steps 3.2 and 3.3 of the proof of Lemma \ref{lem:variance.in.balls}. Therefore, we just name and prove the relevant lemmas. Observe that we need weaker bounds. If we want to show \(|\mathfrak{J}_3|\lesssim m^{-2}\), this requires
	\[I_{3,\nabla}^{i,j,k,l} =  \E_m\left[\int_{B_i} \nabla \Psi_{j,k}(x) \nabla\Psi_{n,\ell}(x) \dd x\right] \lesssim m^{-\alpha+2},\quad\alpha = \#\{i,j,k,n,\ell\}.\]
	
As before, we write \(\nabla \Psi_{j,l} = \nabla\Psi^{(1)}_k + \nabla \Psi^{(2)}_{j,l}\), where
\begin{align}
\nabla\Psi_{k}^{(1)} := \nabla\Psi^{(1,1)} + \nabla\Psi_{k}^{(1,2)} :=\nabla G\left[\rho A\left(F - W_k\delta_{k}^m\right) \right],
\end{align}
and 
\begin{align}
\nabla\Psi_{j,k}^{(2)} := \nabla\Psi_{j}^{(2,1)} +  \nabla\Psi_{j,k}^{(2,2)}:=(1-\delta_{ij})\nabla G\left[ \bigg(A\left(F - W_k\delta_{k}^m\right) \bigg)_j \delta_{j}^m \right].
\end{align}
Recall that \(W_k=(u)_k-V_k\) and \(F=\rho u - j\).

We observe that
\begin{equation}
\label{eq:exp.VAVu.grad}
	\begin{aligned}
		\E_m[\nabla\Psi^{(1,1)}] &= \nabla G  \rho  A F, \\
		\E_m[\nabla\Psi^{(1,2)}_k] &= \nabla G  \rho  A  \Fcal, \\
		\E_m[\nabla\Psi^{(2,1)}_j] &= (1- \delta_{ij}) \nabla G  \Rcal  A F, \\
		\E_m[\nabla\Psi^{(2,2)}_{j,k}] &= (1- \delta_{ij})(1-\delta_{jk}) \nabla G  \Rcal A\Fcal.
	\end{aligned}
\end{equation}
Furthermore, we observe that the only difference to the discussion of \(\mathfrak{J}_1\) is that the outmost \(G\) is replaced by \(\nabla G\). Hence, we we will apply the same strategy as before using the analogous auxiliary estimates for the gradient.	
	
	 We start by giving the corresponding lemmas in the case \(\#\{i,j,k,n,\ell\}\geq 4\).

\begin{lem} \label{lem:cancellations.not.cross_grad}
	\begin{align}
	\left\|\E_m[\nabla\Psi_{k}^{(1)}] \right\|_{L^2(\R^3)} &\lesssim m^{-1}, \label{eq:Psi^1.grad} \\
	\left\|\E_m[\nabla\Psi_{j,k}^{(2)}] \right\|_{L^2(\R^3)} &\lesssim m^{-1} \quad \text{if } j \neq k \label{eq:Psi^2.grad}.
	\end{align}
\end{lem}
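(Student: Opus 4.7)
The plan is to follow verbatim the strategy of Lemma \ref{lem:cancellations.not.cross}, with the single modification that $G$ is replaced by $\nabla G$ throughout. The key new observation is that $\nabla G$ is a bounded operator from $\dot H^{-1}(\R^3)$ to $L^2(\R^3)$, which follows immediately from the boundedness of $G\colon \dot H^{-1}(\R^3) \to \dot H^1(\R^3)$ (an isomorphism for the Poisson equation, and bounded on the divergence-free sector for the Stokes equations). This is exactly what is needed to upgrade the local $L^2(K')$ estimate of Lemma \ref{lem:cancellations.not.cross} to the global $L^2(\R^3)$ estimate required here.

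First, I would apply \eqref{eq:exp.VAVu.grad} to obtain the explicit identities
\begin{align*}
    \E_m[\nabla\Psi^{(1)}_k] &= \nabla G\, V A (V-\V) u, \\
    \E_m[\nabla\Psi^{(2)}_{j,k}] &= \nabla G\, \V A (V-\V) u \qquad \text{whenever } i\neq j \text{ and } j \neq k,
\end{align*}
and $\E_m[\nabla\Psi^{(2)}_{j,k}] = 0$ if $i=j$, exactly as in the proof of Lemma \ref{lem:cancellations.not.cross}. It therefore remains to bound $\|\nabla G\, T (V-\V) u\|_{L^2(\R^3)}$ for $T \in \{VA,\; \V A\}$.

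Next, I would verify that each such $T$ is a bounded operator from $\dot H^{-1}(\R^3)$ to $\dot H^{-1}(\R^3)$. The operator $A$ is bounded from $\dot H^{-1}(\R^3)$ to $\dot H^1(\R^3)$ (by coercivity of $-\Delta + V$, or equivalently via \eqref{eq:relation.A.G}). Multiplication by $V\in W^{1,\infty}(\R^3)$ with compact support maps $\dot H^1(\R^3) \hookrightarrow L^6(\R^3)$ into $L^{6/5}(\R^3) \hookrightarrow \dot H^{-1}(\R^3)$, and $\V$ maps $L^2(K)$ (hence $\dot H^1(\R^3)$) into $L^2(\R^3) \cap \dot H^{-1}(\R^3)$ by Lemma \ref{lem:auxiliary.averages}. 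Combining these with the quantitative estimate \eqref{eq:V-V.H^-1} yields
\[
    \|\nabla G\, T (V-\V) u\|_{L^2(\R^3)}
    \lesssim \|T (V-\V) u\|_{\dot H^{-1}(\R^3)}
    \lesssim \|(V-\V) u\|_{\dot H^{-1}(\R^3)}
    \lesssim m^{-1} \|u\|_{L^2(K)},
\]
which gives both \eqref{eq:Psi^1.grad} and \eqref{eq:Psi^2.grad}.

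There is essentially no obstacle here: the whole argument reduces to checking the boundedness chain on the relevant homogeneous Sobolev spaces, which is a routine application of Sobolev embedding together with Lemma \ref{lem:auxiliary.averages} and the variational characterization of $A$. The only point worth underlining is that, because \(\nabla G\) maps directly into $L^2(\R^3)$ (not merely $L^2_{\loc}$), one does not need to exploit any smallness beyond the $\dot H^{-1}$ control of $(V-\V)u$ already provided by \eqref{eq:V-V.H^-1}.
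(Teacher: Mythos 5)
Your proposal is correct and follows essentially the same route as the paper: compute the expectations via \eqref{eq:exp.VAVu.grad} to get $\nabla G\,VA(V-\V)u$ and $\nabla G\,\V A(V-\V)u$, and then conclude from \eqref{eq:V-V.H^-1} together with the boundedness of $\nabla G\colon \dot H^{-1}(\R^3)\to L^2(\R^3)$ and of $VA$, $\V A$ on $\dot H^{-1}(\R^3)$. The only difference is that you spell out the operator-boundedness chain that the paper leaves implicit, which is a welcome clarification rather than a deviation.
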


%These are the cross terms, when either $j=n$, or $k= \ell$, or $k= \ell$, and all the other indices are different.
%In these cases, we will rely on the following lemma:
\begin{lem} \label{lem:cancellations.cross_grad}
	\begin{align}
	\label{eq:jn.grad} 
	\|\E_m\left[\nabla\Psi_{j,k} \nabla\Psi_{j,\ell}\right] \|_{L^1(\R^3)}  \lesssim m^{-1} \quad \text{if } \#\{i,j,k,\ell\} = 4, \\
	\label{eq:kl.grad} 
	\|\E_m\left[\nabla\Psi_{j,k} \nabla\Psi_{n,k}\right]\|_{L^1(\R^3)}  \lesssim m^{-1} \quad \text{if } \#\{i,j,k,n\} = 4, \\
	\label{eq:jl.grad} 
	\|\E_m\left[\nabla\Psi_{j,k} \nabla\Psi_{n,j}\right]\|_{L^1(\R^3)} \lesssim m^{-1} \quad \text{if } \#\{i,j,k,n\} = 4.
	\end{align}
\end{lem}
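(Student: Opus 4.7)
\textbf{Plan of proof for Lemma \ref{lem:cancellations.cross_grad}.} The plan is to follow the proof of Lemma \ref{lem:cancellations.cross} essentially verbatim, carrying the outermost $\nabla$ through the expansion of the expectations. The single new analytic input is the observation that $\nabla G$ behaves one degree of regularity worse than $G$. More precisely, since $G$ is an isometry from $\dot H^{-1}(\R^3)$ to $\dot H^1(\R^3)$, the operator $\nabla G$ is bounded from $\dot H^{-1}(\R^3)$ to $L^2(\R^3)$; and combining either \eqref{eq:nablaG.delta.pointwise} (integrating the pointwise bound) or \eqref{est:dirac.-1} with the $\dot H^{-1}\to L^2$ mapping property gives
\begin{align}
\sup_{y\in\R^3} \|\nabla G\delta^m_y\|_{L^2(K')}^2 \lesssim m.
\end{align}
This is a factor of $m$ worse than the analogous bound \eqref{eq:G.delta.L^2_loc}, which explains why the three statements are degraded from $m^{-2}$ (Lemma \ref{lem:cancellations.cross}) to $m^{-1}$.

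For \eqref{eq:jn.grad}, I would first reproduce the expansion leading to
\begin{align}
\E_m\left[\nabla\Psi_{j,k} \nabla\Psi_{j,\ell}\right] = \int V(y)\left(\nabla GVA(V-\V)u - (A(V-\V)u)_y\,\nabla G\delta^m_y\right)^2 \dd y.
\end{align}
Integrating in $x\in K'$ and using the inequality $(a-b)^2\le 2a^2+2b^2$ yields two contributions. The first is bounded by $\|\nabla GVA(V-\V)u\|_{L^2(\R^3)}^2\lesssim \|(V-\V)u\|_{L^2(\R^3)}^2\lesssim m^{-2}$ via \eqref{eq:V-V.L^2} and the boundedness of $\nabla GVA$ on $L^2(K)$. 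The second contribution is
\begin{align}
\int V(y)\,(A(V-\V)u)_y^2\,\|\nabla G\delta^m_y\|_{L^2(K')}^2 \dd y \lesssim m\cdot \|A(V-\V)u\|_{L^\infty(\R^3)}^2 \lesssim m\cdot m^{-2} = m^{-1},
\end{align}
using the boundedness of $A$ from $L^2(K)$ to $L^\infty$ together with \eqref{eq:V-V.L^2}. The sum is $O(m^{-1})$, as required.

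For \eqref{eq:kl.grad}, the same expansion as in \eqref{eq:kl} yields
\begin{align}
\E_m\left[\nabla\Psi_{j,k}\nabla\Psi_{n,k}\right] = \int V(z)\left(\nabla G(V-\V)AVu - (u)_z\,\nabla G(V-\V)A\delta^m_z\right)^2 \dd z.
\end{align}
This time, rather than losing a factor of $m$, one gains from the $\dot H^{-1}$-estimate \eqref{eq:V-V.H^-1}. Using $\nabla G\colon \dot H^{-1}\to L^2$ together with \eqref{eq:V-V.H^-1} and \eqref{eq:G.delta.L^2_loc} gives
\begin{align}
\|\nabla G(V-\V)AVu\|_{L^2(\R^3)}^2 + \sup_z \|\nabla G(V-\V)A\delta^m_z\|_{L^2(\R^3)}^2 \lesssim m^{-2},
\end{align}
so that $\|\E_m[\nabla\Psi_{j,k}\nabla\Psi_{n,k}]\|_{L^1(K')} \lesssim m^{-2}$, actually strictly better than the required $m^{-1}$. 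Finally, \eqref{eq:jl.grad} is reduced to the previous two estimates by writing out the expansion analogously to the proof of \eqref{eq:jl} in Lemma \ref{lem:cancellations.cross}, and applying Young's inequality $2ab\le a^2+b^2$ to the pointwise integrand to dominate the mixed term by the two squares already controlled above.

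The proof is a routine variant, with no substantial obstacle: the only point requiring care is choosing, for each of the two summands in the expansion, whether to use $\nabla G\colon \dot H^{-1}\to L^2$ (which keeps the $m^{-1}$ gain from \eqref{eq:V-V.H^-1}) or the pointwise bound \eqref{eq:nablaG.delta.pointwise} (which loses a factor of $m$ but is the only option when $\nabla G$ hits $\delta^m_y$ directly without a $(V-\V)$ factor to absorb). In \eqref{eq:jn.grad} the second route is forced and is what determines the final rate $m^{-1}$; in \eqref{eq:kl.grad} the first route is always available, yielding the stronger bound $m^{-2}$.
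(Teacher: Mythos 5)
Your proof follows essentially the same route as the paper's: the same expansion of the expectations as in Lemma \ref{lem:cancellations.cross}, the same splitting into the two squared contributions, the bound $\sup_y\|\nabla G\delta^m_y\|_{L^2(\R^3)}^2\lesssim m$ combined with the $(V-\V)$ cancellation estimates \eqref{eq:V-V.L^2}, \eqref{eq:V-V.H^-1}, and Young's inequality to reduce \eqref{eq:jl.grad} to the other two cases. The only cosmetic deviation is that you phrase the estimates on a bounded set $K'$ while the lemma asserts $L^1(\R^3)$; since every input you invoke ($\nabla G\colon\dot H^{-1}(\R^3)\to L^2(\R^3)$ and the cancellation lemmas) is global, your argument gives the stated global bound verbatim, and your remark that \eqref{eq:kl.grad} in fact yields $m^{-2}$ is consistent with the paper's computation.
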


%Finally, we obtain the following estimates for the cases where $i=k$ or $j=k$.

\begin{lem} \label{lem:bounds.not.cross_grad}
	We have for any $i,j,k$
	\begin{align}
	& \left\|\E_m[\nabla\Psi^{(1,1)}] \right\|_{L^2(\R^3)} 
	+ \left\|\E_m[\nabla\Psi_{k}^{(1,2)}] \right\|_{L^2(\R^3)} \\
	& \qquad \qquad + \left\|\E_m[\nabla\Psi_{j}^{(2,1)}] \right\|_{L^2(\R^3)} 
	+ \left\|\E_m[\nabla\Psi_{j,k}^{(2,2)}] \right\|_{L^2(\R^3)}\lesssim m. \label{eq:bounds.separate.no.i.grad}\\	
	& \left\|\E_m[\1_{B^m_i}\nabla\Psi^{(1,1)}] \right\|_{L^2(\R^3)} 
	+ \left\|\E_m[\1_{B^m_i}\nabla\Psi_{k}^{(1,2)}] \right\|_{L^2(\R^3)} \\
	& \qquad \qquad + \left\|\E_m[\1_{B^m_i}\nabla\Psi_{j}^{(2,1)}] \right\|_{L^2(\R^3)} 
	+ \left\|\E_m[\1_{B^m_i}\nabla\Psi_{j,k}^{(2,2)}] \right\|_{L^2(\R^3)} \lesssim m^{-5/2}. \label{eq:bounds.separate.with.i.grad}\\
	\end{align}
\end{lem}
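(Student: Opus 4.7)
The plan is to mirror the proof of Lemma \ref{lem:bounds.not.cross} verbatim, with $\nabla G$ replacing $G$ at the outermost position and carefully tracking the losses this entails. The two crucial regularity facts that will power the estimates are (a) $\nabla G\colon \dot H^{-1}(\R^3)\to L^2(\R^3)$ is bounded, and (b) $\nabla GV\colon \dot H^1(\R^3)\to L^\infty(\R^3)$ is bounded (from the $W^{1,\infty}$-statement in Section \ref{sec:MoreNotation}); combined with $\|A\delta^m_y\|_{\dot H^1(\R^3)}\lesssim m^{1/2}$ (which comes from $A\colon\dot H^{-1}\to\dot H^1$ and \eqref{est:dirac.-1}), this yields $\|\nabla GVA\delta^m_y\|_{L^\infty(\R^3)}\lesssim m^{1/2}$, i.e.\ a loss of $m^{1/2}$ compared with the analogous bound for $GVA\delta^m_y$ used in Lemma \ref{lem:bounds.not.cross}. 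This exactly accounts for the weakened exponent $-5/2$ in \eqref{eq:bounds.separate.with.i.grad} (vs.\ $-3$ in Lemma \ref{lem:bounds.not.cross}).

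For \eqref{eq:bounds.separate.no.i.grad}, I apply \eqref{eq:exp.VAVu.grad} directly and observe that each of $\nabla GVAV$, $\nabla GVA\V$, $\nabla G\V AV$ and $\nabla G\V A\V$ maps $\dot H^1(\R^3)$ to $L^2(\R^3)$: multiplications by $V$ and $\V$ map $\dot H^1(\R^3)$ (resp.\ $L^2(K)$) to $L^2\cap\dot H^{-1}$ by the discussion in Section \ref{sec:MoreNotation} and Lemma \ref{lem:auxiliary.averages}, $A$ is bounded on $\dot H^{-1}\to\dot H^1$, and finally $\nabla G$ sends $\dot H^{-1}$ to $L^2$. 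Evaluated on $u\in\dot H^1$ this gives an $m$-independent bound, which is much stronger than the advertised $\lesssim m$.

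For \eqref{eq:bounds.separate.with.i.grad}, I proceed case by case, paralleling the second half of Lemma \ref{lem:bounds.not.cross}. Whenever the index $i$ is distinct from all other indices appearing in the $\Psi$-piece, the expectation factorizes as $\E_m[\1_{B^m_i}]\,\E_m[\nabla\Psi]$ and we gain $\|\E_m[\1_{B^m_i}]\|_{L^\infty}\lesssim m^{-3}$ (times the bound from the previous paragraph), which is $\lesssim m^{-3}$ in total; the cases with $i=j$ vanish by the $(1-\delta_{ij})$ prefactor. The only genuine cases are $i=k$ in $\Psi^{(1,2)}_k$ and $i=k\neq j$ in $\Psi^{(2,2)}_{j,k}$. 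The first is treated as in the proof of Lemma \ref{lem:bounds.not.cross}: writing
\begin{align*}
\E_m\bigl[\1_{B^m_i}\nabla\Psi^{(1,2)}_i\bigr]=\int V(y)(u)_y\,\1_{B^m(y)}\,\nabla GVA\bigl[\delta^m_y\bigr]\,dy,
\end{align*}
pulling out $\sup_y\|\nabla GVA\delta^m_y\|_{L^\infty}\lesssim m^{1/2}$ and using $\|\int V(y)|(u)_y|\1_{B^m(y)}\|_{L^2(\R^3)}\lesssim m^{-3}$ via \eqref{eq:()} and \eqref{eq:V()} gives the bound $\lesssim m^{1/2}\cdot m^{-3}=m^{-5/2}$.

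The main obstacle is the remaining case, $\nabla\Psi^{(2,2)}_{j,i}$ with $i=k\neq j$, because the analogous $L^\infty$ bound for $\nabla G\V A\delta^m_y$ is not immediate: $\V$ is not a multiplication operator, so the statement from Section \ref{sec:MoreNotation} that $\nabla GV\colon\dot H^1\to L^\infty$ does not apply. To handle this, I will use the pointwise estimates from Lemma \ref{lem:Gdelta} and write
\begin{align*}
|\nabla G\V A\delta^m_y|(x)\le \int V(z)\,|(A\delta^m_y)_z|\,|\nabla G\delta^m_z|(x)\,dz
\lesssim \int V(z)\Bigl(1+\tfrac{1}{|y-z|+m^{-1}}\Bigr)\tfrac{dz}{|x-z|^2+m^{-2}},
\end{align*}
which is $O(\log m)$ uniformly in $x,y$ in bounded sets (the singularities $1/|x-z|^2$ and $1/|y-z|$ are integrable against $V$ in $3$D). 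Then, using Cauchy--Schwarz in $y$ to exploit the small support $\1_{B^m(y)}$ of volume $\sim m^{-3}$,
\begin{align*}
\Bigl\|\E_m[\1_{B^m_i}\nabla\Psi^{(2,2)}_{j,i}]\Bigr\|_{L^2}^2\lesssim m^{-3}\int V(y)|(u)_y|^2\,|B^m(y)|\,\|\nabla G\V A\delta^m_y\|_{L^\infty}^2\,dy\lesssim m^{-6}(\log m)^2,
\end{align*}
giving $\lesssim m^{-3}\log m\lesssim m^{-5/2}$ for large $m$. The remaining cross-cases listed in Lemma \ref{lem:cancellations.cross_grad} and the single-$\Psi$ estimates of Lemma \ref{lem:cancellations.not.cross_grad} follow from the same strategy, and the master estimate $|I_{3,\nabla}^{i,j,k,n,\ell}|\lesssim m^{-\alpha+2}$ then splits into cases exactly as in Steps~3.2--3.3 of the proof of Lemma \ref{lem:variance.in.balls}.
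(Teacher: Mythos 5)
Your proposal is correct and follows the paper's own proof almost verbatim: \eqref{eq:bounds.separate.no.i.grad} via the boundedness of the four operators appearing in \eqref{eq:exp.VAVu.grad}, the factorized cases of \eqref{eq:bounds.separate.with.i.grad} via $\|\E_m[\1_{B^m_i}]\|_{L^\infty}\lesssim m^{-3}$ (with the $i=j$ terms vanishing), and the case $i=k$ for $\nabla\Psi^{(1,2)}$ exactly as in the paper, i.e. $\sup_y\|\nabla G V A\delta^m_y\|_{L^\infty}\lesssim m^{1/2}$ times the $m^{-3}$ coming from $\int V(y)(u)_y\1_{B^m(y)}\dd y$. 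The only place you deviate is $\nabla\Psi^{(2,2)}_{j,i}$ with $j\neq i=k$: the paper treats it as ``analogous'', implicitly using that $G\V$ maps $\dot H^1(\R^3)$ to $W^{1,\infty}(\R^3)$ (a property it invokes explicitly elsewhere in the appendix, and which does hold because $\V w\in H^1(\R^3)$ is compactly supported, so convolution with $\nabla g\in L^2_\loc$ lands in $L^\infty$), giving $\|\nabla G\V A\delta^m_y\|_{L^\infty}\lesssim m^{1/2}$ and hence $m^{-5/2}$; your alternative, bounding $|\nabla G\V A\delta^m_y|$ pointwise through Lemma \ref{lem:Gdelta} by $O(\log m)$ and then applying Cauchy--Schwarz over the $y$-integral, is also correct and in fact yields the slightly sharper bound $m^{-3}\log m\leq m^{-5/2}$. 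Either route proves the lemma; your skepticism about deducing the $\V$-case directly from the $\nabla GV$ mapping property is reasonable, but the mapping property for $G\V$ can be justified as above, so the paper's shortcut is not a gap.
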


\begin{proof}[Proof of Lemma \ref{lem:cancellations.not.cross_grad}]
	By \eqref{eq:exp.VAVu.grad}, we have
	\begin{align*}
		\E_m[\Psi^{(1)}_{k}] = \nabla G \rho A(F -  \Fcal).
	\end{align*}
	Using \eqref{eq:F-F.L^2} yields \eqref{eq:Psi^1.grad}.
	
	Similarly, for $j \neq k$, $i \neq j$,
	\begin{align*}
	\E_m[\Psi^{(2)}_{j,k}]  = \nabla G \Rcal A  (F -  \Fcal).
	\end{align*}
	Using again \eqref{eq:F-F.L^2}  yields \eqref{eq:Psi^2.grad}.
\end{proof}
% \rhcomment{$\mathcal P$ ist $\mathcal R$? ueberall hier?}

\begin{proof}[Proof of Lemma \ref{lem:cancellations.cross_grad}]
	Regarding \eqref{eq:jn.grad}, we have
	\begin{align}
		 \E_m\left[\nabla\Psi_{j,k} \nabla\Psi_{j,\ell}\right] 
		& =\int \rho(y_1) \left( \nabla G\rho A(F-\Fcal) - \left(A(F-\Fcal)\right)_{y_1} \nabla G \delta_{y_1}^m \right)^2 \dd y_1,
	\end{align}
	and hence
	\begin{align}
		& \norm{\E_m\left[\nabla\Psi_{j,k} \nabla\Psi_{j,\ell}\right]}_{L^1(\R^3)}  \\
		& \lesssim \|\nabla G\rho A(F-\Fcal)\|_{L^2(\R^3)}^2 + \int \rho(y_1)\left(A(F-\Fcal)\right)_{y_1} \|\nabla G \delta_{y_1}^m\|_{L^2(K)}^2 \dd y_1 \\
		& \lesssim m^{-2} + m^{-1}\\
		& \lesssim m^{-1}, 
	\end{align}
	where we used \eqref{eq:F-F.L^2} for both terms and \eqref{est:dirac.-1} for the second term.
	
	Regarding \eqref{eq:kl.grad}, we compute
	\begin{align}
		& \E_m\left[\nabla \Psi_{j,k}\nabla\Psi_{n,k}\right]  = \int \rho(y_1)\left(\nabla G(\rho-\Rcal)AF - \bigl((u)_{y_2}-v_2\bigr)\nabla G(\rho-\Rcal)A\delta_{y_2}^m \right)^2 f(\de y_2,\de v_2).
	\end{align}
	Hence, we obtain
	\begin{align}
		&\norm{\E_m\left[\nabla \Psi_{j,k}\nabla\Psi_{n,k}\right]}_{L^1(\R^3)} \\
		& \lesssim \norm{\nabla G (\rho-\Rcal)A \rho u}_{L^2(\R^3)}^2 + \sup\limits_{y_1} \|\nabla G (\rho-\Rcal)A \delta_{y_1}^m\|_{L^2(\R^3)}^2 \int \bigl((u)_{y_2}-v_2\bigr)^2 f(\de y_2,\de v_2) \\
		& \lesssim m^{-1},
	\end{align}
	where we used \eqref{eq:V-V.H^-1} for both terms and \eqref{eq:G.delta.L^2_loc} and \ref{ass:energy} for the second term. Finally, \eqref{eq:jl.grad} follows from \eqref{eq:jn.grad} and \eqref{eq:kl.grad} via Young's inequality.
\end{proof}

\begin{proof}[Proof of Lemma \ref{lem:bounds.not.cross_grad}]
	The first estimate, \eqref{eq:bounds.separate.no.i.grad}, follows directly from \eqref{eq:exp.VAVu.grad} and \eqref{est_W_i.F} together with the fact that the operators \(\nabla G \rho A \), \(\nabla G\rho A \), \(\nabla G\Rcal A \) and \(\nabla G \Rcal A\) are all bounded operators from \(\dot{H}^{-1}(\R^3)\) to \( \dot H^1(\R^3)\). 
	
	Regarding \eqref{eq:bounds.separate.with.i.grad}, these estimates follow from \eqref{eq:bounds.separate.no.i.grad} if \(i\neq k\). If \(i=k\), we only need to consider those terms, in which \(k\) appears, i.e. \(\nabla\Psi_k^{(1,2)}\) and \(\nabla\Psi^{(2,2)}_{j,k}\). Again, we only need to consider the case \(j\neq k = i\). 
	
	Then
	\begin{align}
		\norm{\E_m\left[\1_{B_i^m} \nabla\Psi_{j,i}^{(1,2)} \right]} & = \norm{\int \1_{B^m_{y_1}} \nabla G \rho A[\bigl((u)_{y_1}-v_1\bigr) \delta^m_{y_1}]  f(\de y_1,\de v_1)} \\
		& \leq \sup\limits_{y_1\in\R^3} \|\nabla G\rho A\delta_{y_1}^m \|_{L^\infty(\R^3)}  \left\|\int  \bigl((u)_{y_1}-v_1\bigr) \1_{B^m_{y_1}} f(\de y_1,\de v_1) \right\|_{L^2(\R^3)} \\
		& \lesssim m^{-5/2}.
	\end{align}
	Here, we used \eqref{est:dirac.-1} and that $G\rho$ maps $\dot H^1(\R^3)$ to $W^{1,\infty}(\R^3)$ for the first term, and \ref{ass:energy} as well as´ \eqref{eq:()} followed by \eqref{eq:V()} for the second.
	Since for $j \neq i$, 
	\begin{align}
		 \E_m [\1_{B^m_i} \nabla\Psi^{(2,2)}_{j,i}] & = \int \1_{B^m(y_1)} \nabla G \Rcal A  \left[ \bigl((u)_{y_1}-v_1\bigr) \delta^m_{y_1}\right] f(\de y_1,\de v_1),
	\end{align}
	the estimate of this term is analogous.
\end{proof}

This finishes the cases in which at most \(2\) indices are equal. For the remaining cases, we can again follow the same strategy as for \(\J_1\).  We provide here only the necessary estimates. All the other estimates follow by applying Young's inequality and reducing the proofs to the estimates given here, just as in the proof for \(\J_1\).

\begin{lem}\label{lem:bounds.cancellations.grad}
	The corresponding estimates in the case \((\alpha,\beta,\gamma,\delta) = (2,2,2,2)\) are:
	\begin{align}
		i=k, j=n: && \int\left| \E_m\left[\1_{B_i^m} \nabla\Psi_{j,i}^{(2,2)}(\nabla\Psi_{j}^{(2,1)} - \nabla\Psi^{(2,2)}_{j,\ell}) \right] \right| \dd x & \lesssim m^{-2}. \label{eq:2222.ik.jn.grad}\\
		i=k, j=\ell: && \int\left| \E_m\left[\1_{B_i^m} \nabla\Psi_{j,i}^{(2,2)}\nabla\Psi^{(2,2)}_{n,j} \right] \right| \dd x & \lesssim m^{-2}. \label{eq:2222.ik.jl.grad} \\
		i=k=\ell: && \int\left| \E_m\left[\1_{B_i^m} \nabla\Psi_{j,i}^{(2,2)}\nabla\Psi^{(2,2)}_{n,i} \right] \right| \dd x & \lesssim m^{-2}. \label{eq:2222.ikl.grad}\\
		j=n, k=\ell: && \int\left| \E_m\left[\1_{B_i^m} \left|\nabla\Psi_{j,k}^{(2,2)}\right|^2  \right] \right| \dd x & \lesssim m^{-2}. \label{eq:2222.jn.kl.grad}\\
		i=k=\ell, j=n: && \int\left| \E_m\left[\1_{B_i^m} \left|\nabla\Psi^{(2,2)}_{j,i} \right|^2 \right] \right| \dd x & \lesssim 1. \label{eq:2222.ikl.jn.grad}\\
	\end{align}
	The corresponding estimate in the case \((\alpha,\beta,\gamma,\delta) = (2,1,2,1)\) is:
	\begin{align}
		j=n: && \int \left| \E_m\left[\1_{B_i^m}\left|\nabla\Psi_j^{(2,1)}\right|^2\right] \right| \dd x & \lesssim m^{-2}. \label{eq:2121.jn.grad}
	\end{align}
	The corresponding estimates in the case \((\alpha,\beta,\gamma,\delta) = (1,2,2,2)\) are:
	\begin{align}
		i=k=\ell: && \int\left| \E_m\left[\1_{B_i^m} \nabla\Psi_{i}^{(1,2)}\nabla\Psi^{(2,2)}_{n,i} \right] \right| \dd x & \lesssim m^{-2}. \label{eq:1222.ikl.grad}\\
		i=\ell, k = n: && \int\left| \E_m\left[\1_{B_i^m} \nabla\Psi_{k}^{(1,2)}\nabla\Psi^{(2,2)}_{k,i} \right] \right| \dd x & \lesssim m^{-1}. \label{eq:1222.il.kn.grad}
	\end{align}
\end{lem}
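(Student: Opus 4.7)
The plan is to run Step~3.3 of the proof of Lemma~\ref{lem:variance.in.balls} case by case, substituting the outer factor $G$ in each building block $\Psi^{(\alpha,\beta)}$ by $\nabla G$. Since the expansion of the expectations over $w_j,w_k,w_n,w_\ell$, the independence structure, and the cancellation identities collected in Lemmas~\ref{lem:cancellations.not.cross_grad}--\ref{lem:bounds.not.cross_grad} are all governed by the inner operators $A$, $V$ and $\V$ (which remain unchanged), the integral representation of each $\E_m[\1_{B^m_i}\nabla\Psi^{(\alpha,\beta)}_{j,k}\nabla\Psi^{(\gamma,\delta)}_{n,\ell}]$ will be obtained from the corresponding representation in Step~3.3 by replacing every outer occurrence of $G\delta^m_\bullet$, $GV\cdots$ or $G\V\cdots$ by $\nabla G\delta^m_\bullet$, $\nabla GV\cdots$ or $\nabla G\V\cdots$.

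The only genuinely new analytic input will be the gradient counterpart of the ball-averaged bound \eqref{eq:average.G.delta}: starting from the pointwise estimate \eqref{eq:nablaG.delta.pointwise}, a direct computation in polar coordinates yields
\begin{align}
\int_{\R^3}\1_{B^m(y)}(x)\,|\nabla G\delta^m_z|^2(x)\dd x\lesssim\frac{m^{-3}}{(|y-z|^2+m^{-2})^2},
\end{align}
whose integral in $z$ against $V(z)\dd z$ is of order $m^{-2}$, losing exactly a factor $m^2$ against its $\J_1$ counterpart. Combined with $\|\nabla G\delta^m_y\|_{L^2(\R^3)}\lesssim m^{1/2}$ (from \eqref{est:dirac.-1}) and the boundedness of $\nabla GV$ from $\dot H^1(\R^3)$ to $L^\infty(\R^3)$ (inherited from the $GV\colon\dot H^1(\R^3)\to W^{1,\infty}(\R^3)$ mapping noted in Section~\ref{sec:MoreNotation}), these are all the ingredients needed to reproduce each estimate of Step~3.3 while losing at most the factor $m^2$ that the target $m^{-\alpha+2}$ of $\J_3$ allows.

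As an illustration, for \eqref{eq:2222.ik.jn.grad} the expectation reduces, after expansion, to
\begin{align}
&\E_m\left[\1_{B^m_i}(x)\,\nabla\Psi_{j,i}^{(2,2)}(x)\bigl(\nabla\Psi_{j}^{(2,1)}-\nabla\Psi_{j,\ell}^{(2,2)}\bigr)(x)\right]\\
&\qquad=\iint V(y)V(z)\,\1_{B^m(y)}(x)\,\bigl(A[(u)_y\delta^m_y]\bigr)_z\,(\nabla G\delta^m_z)^2(x)\,\bigl(A(V-\V)u\bigr)_z\dd z\dd y;
\end{align}
one then pulls out the $L^\infty$-factor $\|A(V-\V)u\|_{L^\infty}\lesssim m^{-1}$ using \eqref{eq:V-V.L^2}, \eqref{eq:V-V.H^-1} and the continuity $A\colon L^2\cap H^{-1}\to L^\infty$, integrates in $x$ by the ball-averaged bound above, and closes in $z$ and $y$ using \eqref{eq:A.delta.pointwise} and \eqref{eq:V()}. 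The hardest case will be the borderline \eqref{eq:1222.il.kn.grad}, where no cancellation is available and the bound degrades to the critical value $m^{-1}$: there one must exploit the sharp pointwise control \eqref{eq:nablaG.delta.pointwise} together with the $L^\infty$-bound on $\nabla GVA\delta^m_k$ inherited from the Hölder regularity of $A\delta^m_k$, and the resulting double integral only just converges because of the logarithmic subcriticality of the combined singularity.
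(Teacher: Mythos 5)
Your proposal is correct and follows essentially the same route as the paper's appendix proof: each case of Step~3.3 is redone with the outermost $G$ replaced by $\nabla G$, the key new ingredient being the ball-averaged bound $\int \1_{B^m(y)}|\nabla G\delta^m_z|^2\,\dd x\lesssim m^{-3}(|y-z|^4+m^{-4})^{-1}$, which is exactly the paper's \eqref{eq:average.nablaG.delta}, combined with \eqref{est:dirac.-1}, \eqref{eq:A.delta.pointwise}, the $W^{1,\infty}$-mapping of $GV$, $G\V$, and the cancellation $\|A(V-\V)u\|_{L^\infty}\lesssim m^{-1}$ in the case $i=k$, $j=n$, just as in the paper. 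The only (harmless) imprecisions are bookkeeping remarks — the integrated loss versus the $\J_1$ counterpart is a factor $m$ rather than $m^2$, and the genuinely tight case is \eqref{eq:2222.ikl.jn.grad} rather than \eqref{eq:1222.il.kn.grad}, for which the paper's $GV\colon\dot H^1\to W^{1,\infty}$ argument gives $\|\nabla GVA\delta^m_z\|_{L^\infty}\lesssim m^{1/2}$ and in fact a bound $m^{-5/2}\log m\ll m^{-1}$ — neither of which affects the validity of the argument.
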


\begin{proof}[Proof of Lemma \ref{lem:bounds.cancellations.grad}]
	For \eqref{eq:2222.ik.jn.grad}, it is
	\begin{align}
		& \E_m\left[\1_{B_i^m} \nabla\Psi_{j,i}^{(2,2)}(\nabla\Psi_{j,\ell}^{(2,1)} - \nabla\Psi^{(2,2)}_{j,\ell}) \right] \\
		& = \iint \rho(y_2)  \1_{B^m_{y_1}}(x) \left(A\left[\bigl((u)_{y_1}-v_1\bigr) \delta_{y_1}^m\right]\right)_{y_2} (\nabla G \delta_{y_2}^m)^2(x) \left(A(F-\Fcal)\right)_{y_2} f(\de y_1,\de v_1) \de y_2.
	\end{align}
	By \eqref{eq:nablaG.delta.pointwise}, it holds
	\begin{align}\label{eq:average.nablaG.delta}
		\int \1_{B^m_{y_1}}(x) (\nabla G \delta_{y_2}^m)^2(x) \dd x \lesssim m^{-3} \frac{1}{|y_2-y_1|^4+m^{-4}},
	\end{align}
	and thus analogously as in the corresponding term for $\J_1$ using \eqref{eq:A.delta.pointwise}
	\begin{align}
		& \int\left| \E_m\left[\1_{B_i^m} \nabla\Psi_{j,i}^{(2,2)}(\nabla\Psi_{j,\ell}^{(2,1)} - \nabla\Psi^{(2,2)}_{j,\ell}) \right] \right| \dd x \\
		& \lesssim m^{-4} \iint  \rho(y_2)|(u)_{y_1}-v_1| \frac{1}{|y_2-y_1|^4 + m^{-4}}\left(1+\frac{1}{|y_2-y_1|+m^{-1}}\right) f(\de y_1,\de v_1) \dd y_2\\
		& \lesssim m^{-2}.
	\end{align}
	Regarding \eqref{eq:2222.ik.jl.grad}, we compute
	\begin{align*}
		& \E_m\left[ \1_{B^m_i}(x) \nabla\Psi_{ji}^{(2,2)}(x)  \nabla\Psi_{nj}^{(2,2)}(x)  \right] \\
		& = \iint \bigr((u)_{y_1}-v_1\bigl) \bigr((u)_{y_2}-v_2\bigl)\1_{B^m(y_1)}(x) \left(A \delta^m_{y_1} \right)_{y_2}  (\nabla G  \delta^m_{y_2})(x)	\left(\nabla G \Rcal A  \delta^m_{y_2}\right) (x) f(\de y_1,\de v_1) f(\de y_2,\de v_2).
	\end{align*}
	Now we use that $G\Rcal$ maps $\dot H^1(\R^3)$ to $W^{1,\infty}(\R^3)$  to deduce as in the previous case
	\begin{align}
		& \int\left|\E_m\left[ \1_{B^m_i}(x) \nabla\Psi_{ji}^{(2,2)}(x)  \nabla\Psi_{nj}^{(2,2)}(x)  \right]\right| \dd x\\
		& \lesssim m^{1/2}m^{-3}\int \left(\bigl((u)_{y_1}-v_1\bigr)^2 + \bigl((u)_{y_1}-v_1\bigr)^2\right) \frac{1+\frac{1}{|y_2-y_1|+m^{-1}}}{|y_2-y_1|^2+m^{-2}} f(\de y_1,\de v_1) f(\de y_2,\de v_2)\\
		& \lesssim m^{-5/2}\log m.
	\end{align}
	For \eqref{eq:2222.ikl.grad}, we get
	\begin{align}
	& \E_m\left[ \1_{B^m_i}(x) \nabla\Psi_{ji}^{(2,2)}(x)  \nabla\Psi_{ni}^{(2,2)}(x) \right] = \int \bigl((u)_{y_1}-v_1\bigr)^2 \1_{B^m_{y_1}}(x) 
	\left( \nabla G \Rcal A\delta^m_{y_1} \right)(x)^2 f(\de y_1,\de v_1).
	\end{align}
	Thus by \eqref{eq:G.delta.L^2_loc}, \eqref{eq:V()} and \ref{ass:energy}, it is
	\begin{align}
		& \int\left|\E_m\left[ \1_{B^m_i}(x) \nabla\Psi_{ji}^{(2,2)}(x)  \nabla\Psi_{ni}^{(2,2)}(x) \right]\right|\dd x \lesssim m^{-2}.
	\end{align}
	The case \eqref{eq:2222.jn.kl.grad}:
	\begin{align}
	& \E_m\left[ \1_{B^m_i}(x) \nabla\Psi_{jk}^{(2,2)}(x)  \nabla\Psi_{jk}^{(2,2)}(x) \right] \\ 
	&= m^{-3} \iint (\rho)_x \rho(y_2) \bigl((u)_{y_1}-v_1\bigr)^2 \left(A \delta^m_{y_1} \right)^2_{y_2}  (\nabla G \delta^m_{y_2})^2(x) f(\de y_1,\de v_1) \dd y_2.
	\end{align}
	Using \eqref{est:dirac.-1},\eqref{eq:G.delta.L^2_loc}, \eqref{eq:V()} and \ref{ass:energy}, we get
	\begin{align}
	&\int \left|  \E_m\left[ \1_{B^m_i} \Psi_{jk}^{(2,2)}  \Psi_{jk}^{(2,2)} \right]  \right| \dd x \lesssim m^{-2} \int  \rho(y_2) \bigl((u)_{y_1}-v_1\bigr)^2 \left(A \delta^m_{y_1} \right)^2_{y_2}   f(\de y_1,\de v_1) \dd y_2 \lesssim m^{-2}.
	\end{align}
	For the next estimate \eqref{eq:2222.ikl.jn.grad}, we get
	\begin{align}
	& \E_m\left[ \1_{B^m_i}(x) \nabla\Psi_{ji}^{(2,2)}(x)  \nabla\Psi_{ji}^{(2,2)}(x) \right]\\
	&= \iint  \rho(y_2) \bigl((u)_{y_1}-v_1\bigr)^2 \1_{B^m_{y_1}}(x) \left(A \delta^m_{y_1}\right)^2_{y_2}  (\nabla G \delta^m_{y_2})^2(x) f(\de y_1,\de v_1) \dd y_2.
	\end{align}
	By using again \eqref{eq:average.nablaG.delta} and \eqref{eq:A.delta.pointwise}, we get
	\begin{align}
	&\int \left| \E_m \left[ \1_{B^m_i} \nabla\Psi_{ji}^{(2,2)}  \nabla\Psi_{ji}^{(2,2)} \right]  \right| \dd x  \\
	& \lesssim  m^{-3} \int \rho(y_2) \bigl((u)_{y_1}-v_1\bigr)^2  \left( \frac{1}{|y_2-y_1|^4 + m^{-4}} + \frac{1}{|y_2-y_1|^6 + m^{-6}}\right)  f(\de y_1,\de v_1) \dd y_2 \\
	& \lesssim \int \bigl((u)_{y_1}-v_1\bigr)^2f(\de y_1,\de v_1)\lesssim 1.
	\end{align}
	To estimate \eqref{eq:2121.jn.grad}, observe
	\begin{align}
		& \E_m\left[\1_{B_i^m}\left|\nabla\Psi_j^{(2,1)}\right|^2\right]  \lesssim m^{-3} \int(\rho)_x \rho(y_1)(AF)_{y_1}^2 |\nabla G \delta_{y_1}^m|(x)^2 \dd y_1,
	\end{align}
	and hence by \eqref{est:dirac.-1}, it holds
	\begin{align}
		\int \E_m\left[\1_{B_i^m}\left|\nabla\Psi_j^{(2,1)}\right|^2\right]\dd x & \lesssim m^{-2} \int \rho(y_1) (AVu)_{y_1}^2 \dd y_1  \lesssim m^{-2}.
	\end{align}
	For \eqref{eq:1222.ikl.grad}, it holds
	\begin{align*}
	 |\E_m\left[\1_{B_i^m} \nabla\Psi_{i}^{(1,2)}\nabla\Psi^{(2,2)}_{n,i} \right]|  &\leq \int  \1_{B^m_{y_1}} \left|\nabla G \Rcal A \left[\bigl((u)_{y_1}-v_1\bigr) \delta^m_{y_1}\right]\right| \left| \nabla G\rho A\left[\bigl((u)_{y_1}-v_1\bigr)\delta^m_{y_1}\right] \right| f(\de y_1,\de v_1) \\
		&  \lesssim m \int \1_{B^m_{y_1}} \bigl((u)_{y_1}-v_1\bigr)^2 f(\de y_1,\de v_1),
	\end{align*}
	where we used \eqref{est:dirac.-1}. Thus
	\begin{align}
		\int |\E_m\left[\1_{B_i^m} \nabla\Psi_{i}^{(1,2)}\nabla\Psi^{(2,2)}_{n,i} \right]|  \dd x \lesssim m^{-2}.
	\end{align}
	Finally for \eqref{eq:1222.il.kn.grad}, it is
	\begin{align}
		 \left| \E_m\left[\1_{B_i^m} \nabla\Psi_{k}^{(1,2)}\nabla\Psi^{(2,2)}_{k,i} \right] \right| &\leq \int \1_{B^m_{y_1}} \rho(y_1) \bigl|(u)_{y_2}-v_2\bigr|^2 \left|\nabla G \rho A \delta_{y_2}^m \right| \left|(A\delta_{y_1}^m)_{y_2}\right| \left|\nabla G \delta^m_{y_2}\right| \dd y_1 f(\de y_2,\de v_2) \\
		& \lesssim m^{1/2} \int \1_{B^m_{y_1}}\rho(y_1) \bigl|(u)_{y_1}-v_1\bigl|^2 \left|(A\delta_{y_1}^m)_{y_2}\right| \left|\nabla G \delta^m_{y_2}\right| \dd y_1 f(\de y_2, \de v_2),
	\end{align}
	where we used \eqref{eq:G.delta.L^2_loc}. This is estimated as in \eqref{eq:2222.ik.jl.grad} to get
	\begin{align}
		\int \left| \E_m\left[\1_{B_i^m} \nabla\Psi_{k}^{(1,2)}\nabla\Psi^{(2,2)}_{k,i} \right] \right| \dd x \lesssim m^{-1}.
	\end{align}
	This finishes the proof.
\end{proof}

\section*{Acknowledgements}

The authors thank Juan J.L. Vel\'azquez for valuable discussions.
The authors have been supported by the Deutsche Forschungsgemeinschaft (DFG, German Research Foundation)
through the collaborative research center ``The Mathematics of Emerging Effects'' (CRC 1060, Projekt-ID 211504053)
and the Hausdorff Center for Mathematics (GZ 2047/1, Projekt-ID 390685813).

%%%%%%%%%%% For ARMA %%%%%%%%%%%%%%%%%%
\subsection*{Conflict of interest} The authors declare that they have no conflict of interest.

\subsection*{Data availability}
Data sharing not applicable to this article as no datasets were generated or analysed during the current study.

%%%%%%%%%%%%%%%%%%%%%%

\printbibliography

@article {DuerinckxGloriaOtto20a,
    AUTHOR = {Duerinckx, Mitia and Gloria, Antoine and Otto, Felix},
     TITLE = {Robustness of the pathwise structure of fluctuations in
              stochastic homogenization},
   JOURNAL = {Probab. Theory Related Fields},
  FJOURNAL = {Probability Theory and Related Fields},
    VOLUME = {178},
      YEAR = {2020},
    NUMBER = {1-2},
     PAGES = {531--566},
      ISSN = {0178-8051,1432-2064},
   MRCLASS = {60F99 (35B27 60H25)},
  MRNUMBER = {4146544},
       DOI = {10.1007/s00440-020-00983-w},
       URL = {https://doi.org/10.1007/s00440-020-00983-w},
}

@article {DuerinckxFischerGloria22,
    AUTHOR = {Duerinckx, Mitia and Fischer, Julian and Gloria, Antoine},
     TITLE = {Scaling limit of the homogenization commutator for {G}aussian
              coefficient fields},
   JOURNAL = {Ann. Appl. Probab.},
  FJOURNAL = {The Annals of Applied Probability},
    VOLUME = {32},
      YEAR = {2022},
    NUMBER = {2},
     PAGES = {1179--1209},
      ISSN = {1050-5164,2168-8737},
   MRCLASS = {60H30 (35B27 60H07)},
  MRNUMBER = {4414703},
MRREVIEWER = {Zhanwen\ Yang},
       DOI = {10.1214/21-aap1705},
       URL = {https://doi.org/10.1214/21-aap1705},
}

@incollection {KacimiMurat89,
    AUTHOR = {Kacimi, Hayat and Murat, Fran\c{c}ois},
     TITLE = {Estimation de l'erreur dans des probl\`emes de {D}irichlet o\`u
              apparait un terme \'{e}trange},
 BOOKTITLE = {Partial differential equations and the calculus of variations,
              {V}ol. {II}},
    SERIES = {Progr. Nonlinear Differential Equations Appl.},
    VOLUME = {2},
     PAGES = {661--696},
 PUBLISHER = {Birkh\"{a}user Boston, Boston, MA},
      YEAR = {1989},
   MRCLASS = {35J05 (35B25)},
  MRNUMBER = {1034024},
}

@article{Giunti20,
title = {Convergence rates for the homogenization of the Poisson problem in randomly perforated domains},
journal = {Networks and Heterogeneous Media},
volume = {16},
number = {3},
pages = {341-375},
year = {2021},
author = {Arianna Giunti},
}

@article {HillairetMoussaSueur19,
    AUTHOR = {Hillairet, Matthieu and Moussa, Ayman and Sueur, Franck},
     TITLE = {On the effect of polydispersity and rotation on the {B}rinkman
              force induced by a cloud of particles on a viscous
              incompressible flow},
   JOURNAL = {Kinet. Relat. Models},
  FJOURNAL = {Kinetic and Related Models},
    VOLUME = {12},
      YEAR = {2019},
    NUMBER = {4},
     PAGES = {681--701},
      ISSN = {1937-5093},
   MRCLASS = {35Q35 (35B27 76T20)},
  MRNUMBER = {3984746},
MRREVIEWER = {M. V. Goncharenko},
       DOI = {10.3934/krm.2019026},
       URL = {https://doi.org/10.3934/krm.2019026},
}

@article {BoudinGrandmontLorzMoussa15,
    AUTHOR = {Boudin, Laurent and Grandmont, C\'{e}line and Lorz, Alexander and
              Moussa, Ayman},
     TITLE = {Modelling and numerics for respiratory aerosols},
   JOURNAL = {Commun. Comput. Phys.},
  FJOURNAL = {Communications in Computational Physics},
    VOLUME = {18},
      YEAR = {2015},
    NUMBER = {3},
     PAGES = {723--756},
      ISSN = {1815-2406},
   MRCLASS = {65M60 (35Q30 35Q83 35Q92 76D05 76T10 76Z05 92C30)},
  MRNUMBER = {3396231},
       DOI = {10.4208/cicp.180714.200415a},
       URL = {https://doi.org/10.4208/cicp.180714.200415a},
}

@article{ArmstrongKuusiMourrat17,
	Author = {Armstrong, Scott and Kuusi, Tuomo and Mourrat, Jean-Christophe},
	Date-Added = {2020-03-23 11:51:18 +0100},
	Date-Modified = {2020-03-23 11:52:34 +0100},
	Journal = {Invent. Math.},
	Number = {3},
	Pages = {999--1154},
	Title = {The additive structure of elliptic homogenization},
	Volume = {208},
	Year = {2017}}

@article{DuerinckxGloriaOtto20,
	Author = {Duerinckx, Mitia and Gloria, Antoine and Otto, Felix},
	Date-Added = {2020-03-23 11:48:24 +0100},
	Date-Modified = {2020-03-23 11:50:23 +0100},
	Journal = {Commun. Math. Phys.},
	Title = {The structure of fluctuations in stochastic homogenization},
	Year = {2020}}

@article{GiuntiHoefer19,
	Author = {Giunti, Arianna and H\"{o}fer, Richard M.},
	Doi = {10.1016/j.anihpc.2019.06.002},
	Fjournal = {Annales de l'Institut Henri Poincar\'{e}. Analyse Non Lin\'{e}aire},
	Issn = {0294-1449},
	Journal = {Ann. Inst. H. Poincar\'{e} Anal. Non Lin\'{e}aire},
	Mrclass = {35B27 (35Q35 35R60)},
	Mrnumber = {4020526},
	Number = {7},
	Pages = {1829--1868},
	Title = {Homogenisation for the {S}tokes equations in randomly perforated domains under almost minimal assumptions on the size of the holes},
	Url = {https://doi.org/10.1016/j.anihpc.2019.06.002},
	Volume = {36},
	Year = {2019},
	Bdsk-Url-1 = {https://doi.org/10.1016/j.anihpc.2019.06.002}}

@article{Ozawa83,
	Author = {Ozawa, Shin},
	Fjournal = {Osaka Journal of Mathematics},
	Issn = {0030-6126},
	Journal = {Osaka J. Math.},
	Mrclass = {35P20 (35B40 35J05 58G25)},
	Mrnumber = {727440},
	Number = {4},
	Pages = {923--937},
	Title = {Point interaction potential approximation for {$(-\Delta +U)^{-1}$} and eigenvalues of the {L}aplacian on wildly perturbed domain},
	Url = {http://projecteuclid.org/euclid.ojm/1200776595},
	Volume = {20},
	Year = {1983},
	Bdsk-Url-1 = {http://projecteuclid.org/euclid.ojm/1200776595}}

@article{CarrapatosoHillairet20,
	Author = {Carrapatoso, Kleber and Hillairet, Matthieu},
	Doi = {10.1007/s00220-019-03637-8},
	Fjournal = {Communications in Mathematical Physics},
	Issn = {0010-3616},
	Journal = {Comm. Math. Phys.},
	Mrclass = {76T20 (35Q35 35Q83 76D05)},
	Mrnumber = {4050096},
	Number = {1},
	Pages = {265--325},
	Title = {On the derivation of a {S}tokes-{B}rinkman problem from {S}tokes equations around a random array of moving spheres},
	Url = {https://doi.org/10.1007/s00220-019-03637-8},
	Volume = {373},
	Year = {2020},
	Bdsk-Url-1 = {https://doi.org/10.1007/s00220-019-03637-8}}

@article{DalMasoGarroni.punctured,
	Author = {Dal Maso, Gianni and Garroni, Adriana},
	Doi = {10.1142/S0218202594000224},
	Fjournal = {Mathematical Models and Methods in Applied Sciences},
	Issn = {0218-2025},
	Journal = {Math. Models Methods Appl. Sci.},
	Mrclass = {35B27 (35J25 49J20)},
	Mrnumber = {1282241},
	Mrreviewer = {Alexander Belyaev},
	Number = {3},
	Pages = {373--407},
	Title = {New results on the asymptotic behavior of {D}irichlet problems in perforated domains},
	Url = {https://doi.org/10.1142/S0218202594000224},
	Volume = {4},
	Year = {1994},
	Bdsk-Url-1 = {https://doi.org/10.1142/S0218202594000224}}

@inbook{papvar.tinyholes,
	Author = {Papanicolaou, G. C. and Varadhan, S. R. S.},
	Booktitle = {Stochastic Differential Systems Filtering and Control: Proceedings of the IFIP-WG 7/1 Working Conference Vilnius, Lithuania, USSR, Aug. 28--Sept. 2, 1978},
	Doi = {10.1007/BFb0004010},
	Isbn = {978-3-540-38503-5},
	Pages = {190--206},
	Publisher = {Springer Berlin Heidelberg},
	Title = {Diffusion in regions with many small holes},
	Url = {https://doi.org/10.1007/BFb0004010},
	Year = {1980},
	Bdsk-Url-1 = {https://doi.org/10.1007/BFb0004010}}

@article{Hauray09,
	Author = {Hauray, Maxime},
	Doi = {10.1142/S0218202509003814},
	Fjournal = {Mathematical Models and Methods in Applied Sciences},
	Issn = {0218-2025},
	Journal = {Math. Models Methods Appl. Sci.},
	Mrclass = {35Q31 (35L65)},
	Mrnumber = {2555474},
	Number = {8},
	Pages = {1357--1384},
	Title = {Wasserstein distances for vortices approximation of {E}uler-type equations},
	Url = {https://doi.org/10.1142/S0218202509003814},
	Volume = {19},
	Year = {2009},
	Bdsk-Url-1 = {https://doi.org/10.1142/S0218202509003814}}

@article{Rubinstein1986,
	Author = {Rubinstein, J.},
	Doi = {10.1007/BF01011910},
	Fjournal = {Journal of Statistical Physics},
	Issn = {0022-4715},
	Journal = {J. Statist. Phys.},
	Mrclass = {82A70 (76S05)},
	Mrnumber = {858259},
	Mrreviewer = {C. Eugene Wayne},
	Number = {5-6},
	Pages = {849--863},
	Title = {On the macroscopic description of slow viscous flow past a random array of spheres},
	Url = {https://doi.org/10.1007/BF01011910},
	Volume = {44},
	Year = {1986},
	Bdsk-Url-1 = {https://doi.org/10.1007/BF01011910}}

@article{FigariOrlandiTeta85,
	Author = {Figari, Rodolfo and Orlandi, E. and Teta, S.},
	Fjournal = {Journal of Statistical Physics},
	Journal = {J. Stat. Phys.},
	Number = {3-4},
	Pages = {465--487},
	Publisher = {Springer},
	Title = {The Laplacian in regions with many small obstacles: fluctuations around the limit operator},
	Volume = {41},
	Year = {1985}}

@book{MarchenkoKhruslov74,
	Author = {Marchenko, V. A. and Khruslov, E. Ya.},
	Mrclass = {35JXX},
	Mrnumber = {0601059},
	Mrreviewer = {Publisher's description},
	Pages = {279},
	Publisher = {Izdat. ``Naukova Dumka'', Kiev},
	Title = {Boundary value problems in domains with fine-grained boundary (in {R}ussian)},
	Year = {1974}}

@article{Hofer18MeanField,
	Author = {H{\"o}fer, Richard M.},
	Doi = {10.1007/s00220-018-3131-y},
	Fjournal = {Communications in Mathematical Physics},
	Issn = {0010-3616},
	Journal = {Comm. Math. Phys.},
	Mrclass = {76T20 (35Q35 76D07)},
	Mrnumber = {3795188},
	Number = {1},
	Pages = {55--101},
	Title = {Sedimentation of inertialess particles in {S}tokes flows},
	Url = {https://doi.org/10.1007/s00220-018-3131-y},
	Volume = {360},
	Year = {2018},
	Bdsk-Url-1 = {https://doi.org/10.1007/s00220-018-3131-y}}

@article{GiuntiHoferVelazquez18,
	title = {Homogenization for the Poisson equation in randomly perforated domains under minimal assumptions on the size of the holes},
	volume = {43},
	issn = {0360-5302},
	url = {https://doi.org/10.1080/03605302.2018.1531425},
	doi = {10.1080/03605302.2018.1531425},
	abstract = {This article deals with the homogenization of the Poisson equation in a bounded domain of Rd, d {\textgreater} 2, which is perforated by a random number of small spherical holes with random radii and positions. We show that for a class of stationary short-range correlated measures for the centres and radii of the holes, we recover in the homogenized limit an averaged analogue of the ‘strange term’ obtained by Cioranescu and Murat in the periodic case [D. Cioranescu and F. Murat, Un term étrange venu d’ailleurs (1986)]. We stress that we only require that the random radii have finite (d−2)-moment, which is the minimal assumption in order to ensure that the average of the capacity of the balls is finite. Under this assumption, there are holes which overlap with probability one. However, we show that homogenization occurs and that the clustering holes do not have any effect in the resulting homogenized equation.},
	pages = {1377--1412},
	number = {9},
	journaltitle = {Communications in Partial Differential Equations},
	author = {Giunti, Arianna and Höfer, Richard and Velázquez, Juan J. L.},
	urldate = {2022-05-12},
	date = {2018-09-02},
	keywords = {Domains with holes, marked point processes, random homogenization},
	file = {Eingereichte Version:/home/jansen/Zotero/storage/FQLT4RSE/Giunti et al. - 2018 - Homogenization for the Poisson equation in randoml.pdf:application/pdf;Snapshot:/home/jansen/Zotero/storage/K3K3LCIA/03605302.2018.html:text/html}
}

@article{HoferVelazquez18,
	Author = {H{\"o}fer, Richard M. and Vel\'azquez, Juan J. L.},
	Doi = {10.1007/s00205-017-1182-4},
	Fjournal = {Archive for Rational Mechanics and Analysis},
	Issn = {0003-9527},
	Journal = {Arch. Ration. Mech. Anal.},
	Keywords = {own},
	Mrclass = {35B27 (35J25 35Q35)},
	Mrnumber = {3744384},
	Mrreviewer = {Isabelle Gruais},
	Number = {3},
	Pages = {1165--1221},
	Title = {The method of reflections, homogenization and screening for {P}oisson and {S}tokes equations in perforated domains},
	Volume = {227},
	Year = {2018}
}

@book{Galdi11,
	Author = {Galdi, G. P.},
	Doi = {10.1007/978-0-387-09620-9},
	Edition = 2,
	Isbn = {978-0-387-09619-3},
	Mrclass = {35Q30 (35-02 76D03 76D05 76D07)},
	Mrnumber = {2808162},
	Pages = {xiv+1018},
	Publisher = {Springer, New York},
	Series = {Springer Monographs in Mathematics},
	Title = {An introduction to the mathematical theory of the {N}avier-{S}tokes equations, steady-state problems},
	Url = {http://dx.doi.org/10.1007/978-0-387-09620-9},
	Year = {2011},
	Bdsk-Url-1 = {http://dx.doi.org/10.1007/978-0-387-09620-9}}

@article{Gerard-Varet19,
	title = {A simple justification of effective models for conducting or fluid media with dilute spherical inclusions},
	volume = {128},
	issn = {0921-7134},
	url = {https://content.iospress.com/articles/asymptotic-analysis/asy211696},
	doi = {10.3233/ASY-211696},
	abstract = {We present a gentle approach to the justification of effective media approximations, for {PDE}’s set outside the union of n ≫ 1 spheres with low volume fraction. To illustrate our approach, we consider three classical examples: the derivation of the so},
	pages = {31--53},
	number = {1},
	journaltitle = {Asymptotic Analysis},
	author = {Gérard-Varet, David},
	urldate = {2022-05-12},
	date = {2022-01-01},
	langid = {english},
	note = {Publisher: {IOS} Press},
	file = {Eingereichte Version:/home/jansen/Zotero/storage/VNC883VV/Gérard-Varet - 2022 - A simple justification of effective models for con.pdf:application/pdf;Snapshot:/home/jansen/Zotero/storage/TMFLSLYH/asy211696.html:text/html}
}

@article{BraunHepp1977,
	Author = {Braun, W. and Hepp, K.},
	Fjournal = {Communications in Mathematical Physics},
	Issn = {0010-3616},
	Journal = {Comm. Math. Phys.},
	Mrclass = {82.60},
	Mrnumber = {0475547},
	Mrreviewer = {H. Wakita},
	Number = {2},
	Pages = {101--113},
	Title = {The {V}lasov dynamics and its fluctuations in the {$1/N$} limit of interacting classical particles},
	Url = {http://projecteuclid.org/euclid.cmp/1103901139},
	Volume = {56},
	Year = {1977},
	Bdsk-Url-1 = {http://projecteuclid.org/euclid.cmp/1103901139}}

@article{DesvillettesGolseRicci08,
	Author = {Desvillettes, Laurent and Golse, Fran{\c{c}}ois and Ricci, Valeria},
	Doi = {10.1007/s10955-008-9521-3},
	Fjournal = {Journal of Statistical Physics},
	Issn = {0022-4715},
	Journal = {J. Stat. Phys.},
	Mrclass = {35Q35 (76D05 76M35 82C80)},
	Mrnumber = {2398959},
	Mrreviewer = {Hai-Liang Li},
	Number = {5},
	Pages = {941--967},
	Title = {The mean-field limit for solid particles in a {N}avier-{S}tokes flow},
	Url = {http://dx.doi.org/10.1007/s10955-008-9521-3},
	Volume = {131},
	Year = {2008},
	Bdsk-Url-1 = {http://dx.doi.org/10.1007/s10955-008-9521-3}}

@incollection{CioranescuMurat82a,
	Author = {Cioranescu, D. and Murat, F.},
	Booktitle = {Nonlinear partial differential equations and their applications. {C}oll\`ege de {F}rance {S}eminar, {V}ol. {II} ({P}aris, 1979/1980)},
	Mrclass = {35J05 (35B25)},
	Mrnumber = {652509},
	Pages = {98--138, 389--390},
	Publisher = {Pitman, Boston, Mass.-London},
	Series = {Res. Notes in Math.},
	Title = {Un terme \'etrange venu d'ailleurs},
	Volume = {60},
	Year = {1982}}

@article{Allaire90a,
	Author = {Allaire, Gr{\'e}goire},
	Coden = {AVRMAW},
	Doi = {10.1007/BF00375065},
	Fjournal = {Archive for Rational Mechanics and Analysis},
	Issn = {0003-9527},
	Journal = {Arch. Rational Mech. Anal.},
	Mrclass = {35B27 (35Q30 76D05 76D30 76S05)},
	Mrnumber = {1079189},
	Mrreviewer = {Doina Cioranescu},
	Number = {3},
	Pages = {209--259},
	Title = {Homogenization of the {N}avier-{S}tokes equations in open sets perforated with tiny holes. {I}. {A}bstract framework, a volume distribution of holes},
	Url = {http://dx.doi.org/10.1007/BF00375065},
	Volume = {113},
	Year = {1990},
	Bdsk-Url-1 = {http://dx.doi.org/10.1007/BF00375065}}

@article{DuerinckxGloria21,
     author = {Duerinckx, Mitia and Gloria, Antoine},
     title = {Quantitative homogenization theory for random suspensions in steady {Stokes} flow},
     journal = {Journal de l{\textquoteright}\'Ecole polytechnique {\textemdash} Math\'ematiques},
     pages = {1183--1244},
     publisher = {Ecole polytechnique},
     volume = {9},
     year = {2022},
     doi = {10.5802/jep.204},
     language = {en},
     url = {http://www.numdam.org/articles/10.5802/jep.204/}
}

@article{FepponWenjia21,
author = {Feppon, Florian and Jing, Wenjia},
title = {High Order Homogenized Stokes Models Capture all Three Regimes},
journal = {SIAM Journal on Mathematical Analysis},
volume = {54},
number = {4},
pages = {5013-5040},
year = {2022},
doi = {10.1137/21M1390232},
URL = {https://doi.org/10.1137/21M1390232}
}

@inbook{Feppon22,
author = {Florian Feppon},
editor = {Roland Herzog and Matthias Heinkenschloss and Dante Kalise and Georg Stadler and Emmanuel Trélat},
doi = {doi:10.1515/9783110695984-010},
url = {https://doi.org/10.1515/9783110695984-010},
title = {High-order homogenization of the Poisson equation in a perforated periodic domain},
booktitle = {Optimization and Control for Partial Differential Equations: Uncertainty quantification, open and closed-loop control, and shape optimization},
year = {2022},
publisher = {De Gruyter},
pages = {237--284},
lastchecked = {2022-05-12}
}
\end{document}